\documentclass[12pt]{article}
\usepackage{setspace}
\usepackage{geometry}
\usepackage{amsfonts, amssymb, amsmath, amsxtra, latexsym, amsthm, amscd}
\usepackage[mathscr]{eucal}

\usepackage{graphicx}
\usepackage{epstopdf}
\usepackage{pgfplots}
\pgfplotsset{compat=1.18}
\usepackage{tikz}
\usepackage{float}
\usepackage{booktabs}
\usepackage{tabularx}
\usepackage{subcaption}
\usepackage{caption}

\usepackage[dvipsnames]{xcolor} 
\usepackage{soul}
\usepackage{enumitem}
\usepackage{url}
\usepackage{fancyhdr}
\usepackage{makeidx}

\RequirePackage[colorlinks,citecolor=blue,filecolor=black,linkcolor=blue,urlcolor=black]{hyperref}

\usepackage[round]{natbib}

\newtheorem{theorem}{Theorem}

\newtheorem{case}{Special case}

\newtheorem{corollary}{Corollary}

\newtheorem{definition}{Definition}

\newtheorem{lemma}{Lemma}

\newtheorem{proposition}{Proposition}
\newtheorem{remark}{Remark}

\newtheorem{assum}{Assumption}

\newcommand{\rr}{\mathbb{R}}

\newcommand{\NSP}[1]{{\color{blue} NSP: #1}}

\begin{document}
\title{From Balanced Growth to Innovation Cycles:
A Solow-Type Semi-Endogenous Growth Model\footnote{We would like to thank David Desmarchelier, an Associate Editor and two anonymous referees for their helpful comments and suggestions.}}
\author{Cuong LE VAN\thanks{PSE, CNRS, IPAG Business School, FTU Vietnam. Email: Cuong.Le-Van@univ-paris1.fr.}\and Tin NGUYEN-TRONG\footnote{EconomiX, University of Paris Nanterre.  Email: tin.nt2@vnp.edu.vn.}  \and Ngoc-Sang PHAM\thanks{BETA, University of Lorraine; EM Normandie Business School, M\'etis Lab; EconomiX, University of Paris Nanterre;  Emails: pns.pham@gmail.com, ngoc-sang.pham@univ-lorraine.fr.} \and Thi Kim Cuong PHAM\footnote{EconomiX, University of Paris Nanterre. Email: pham$\_$tkc@parisnanterre.fr.}}
\date{\today}

\maketitle
\begin{abstract}

We develop a growth model with an R\&D sector in which the elasticity of innovation with respect to existing knowledge can be negative. We investigate the existence and uniqueness of a balanced growth path (BGP) and derive closed-form growth factors, showing that productivity and output growth are semi-endogenous and population-driven. We also establish the global stability under a general production function. Under testable parameter restrictions, the economy converges to the BGP. However, when the knowledge elasticity is sufficiently low, a Jacobian-based condition implies instability, and an N-period innovation cycle can emerge. Comparative statics are conducted to explore the impact of several factors, including research efficiency, input elasticity, and population growth. Numerical simulations map out transitions and mark the points at which stability is lost. They make clear when innovation frictions endanger balanced growth and provide practical guidance for R\&D and demographic policy.

\vspace{1em}
\noindent\textbf{JEL Classifications:} C62, O31, O41.

\noindent\textbf{Keywords:} semi-endogenous growth, knowledge, 
balanced growth path, stability, instability, monotone dynamical system.

\end{abstract}


\section{Introduction}\label{section1}
Technological progress is central to sustained economic growth. In the neoclassical model of \citet{solow1956}, however, technological change is exogenous and therefore remains outside the mechanism of the model. Subsequent research has explained technological progress through learning, human-capital accumulation, and purposeful research. \citet{Arrow1962} emphasizes learning-by-doing, \citet{romer1986} studies the increasing returns generated by nonrival knowledge, and \citet{Romer1990} models the production of new ideas through deliberate R\&D activity. These contributions establish that knowledge can sustain growth because it raises productivity and facilitates further innovation.

More recent evidence nevertheless suggests that the relationship between existing knowledge and subsequent innovation need not be positive. \citet{Bloom2020} analyze substantial declines in research productivity across a range of industries and technologies: maintaining a given rate of technological progress has required rapidly increasing research effort. Their findings are consistent with the view that past discoveries can make the remaining problems more difficult to solve. This possibility raises a question that has received relatively less attention. If accumulated knowledge eventually makes further innovation sufficiently difficult, does the economy still converge smoothly to a balanced growth path, or can the innovation process itself generate instability and recurrent fluctuations?

We address this question in a discrete-time Solow-type model with endogenous innovation. A constant fraction of output is saved, and fixed shares of saving and labor are allocated between final-good production and research. The economy has two accumulated state variables: physical capital and a nonrival stock of disembodied technological knowledge. Final output is produced from knowledge, physical capital, and production labor. New knowledge is produced from inherited knowledge, a flow of R\&D expenditure, and research labor. The same knowledge stock can be used simultaneously in final production and research, whereas labor and saving must be allocated between the two sectors. The variable representing R\&D expenditure is a flow rather than a stock of human capital.

This structure makes the paper's position precise. Knowledge creation is endogenous, since research resources and inherited knowledge determine the production of new ideas. However, saving and sectoral allocations are exogenous rather than determined through intertemporal optimization. The model is therefore not a standard endogenous-growth model in which policy or preference parameters permanently determine the long-run growth rate. It is instead a Solow-type model with endogenous innovation and semi-endogenous long-run growth.

This distinction connects our framework to the non-scale R\&D literature. Early R\&D-based models, such as \citet{Romer1990}, imply strong scale effects because a larger research sector permanently raises the growth rate. \citet{Jones1995}, \citet{Segerstrom1998}, and \citet{Young1998} develop alternative mechanisms that remove these scale effects and make long-run growth depend on exogenous demographic forces and the parameters of the idea-production technology. \citet{Jonesreview} provides a recent synthesis of this semi-endogenous approach. Our model follows the same population-driven long-run mechanism: research efficiency, saving, and the allocation of resources to R\&D affect long-run level of output and transitional dynamics, but sustained growth ultimately depends on population growth and the elasticities of idea production.

Our balanced-growth analysis is also closely related to \citet{EicherTurnovsky1999}, who develop a general two-sector non-scale model with capital accumulation and endogenous technological change. They show that balanced growth can arise under different configurations of sectoral returns to scale and that final output and knowledge do not necessarily grow at the same rate. Their analysis provides a useful benchmark for our balanced-growth results. We impose a more specific production structure, which allows us to derive explicit growth factors and steady-state levels. We then go beyond the balanced-growth characterization and investigate questions that are not resolved by a balanced-growth characterization alone: global convergence from arbitrary positive initial conditions, loss of local stability, and deterministic innovation cycles. Our contribution lies thus in the dynamics around and away from the balanced growth path, rather than simply establishing the existence of population-driven balanced growth.

We first provide a complete characterization of the nondegenerate balanced growth path. Under economically meaningful feasibility conditions, the balanced growth path exists and is unique. Knowledge, capital, and output grow in the long run at rates determined by population growth. Parameters that increase research intensity or research efficiency affect long-run levels and may produce substantial transitional effects, but not the long-run growth rates. In this sense, the model separates the determinants of growth from the determinants of levels and transitional dynamics, as in the semi-endogenous literature.

We next study whether the economy converges to this balanced growth path. Once the growing variables are appropriately normalized, the model becomes a two-dimensional nonlinear dynamical system in capital and knowledge. When inherited knowledge has a nonnegative overall effect on the production of new ideas and the accumulation technologies have sufficiently strong diminishing returns, the normalized system is monotone and globally convergent. Thus, every trajectory starting from positive capital and knowledge converges to the balanced-growth steady state. We subsequently extend the result beyond the Cobb--Douglas specification to a broader class of production and innovation technologies satisfying appropriate balanced-growth, monotonicity, and concavity conditions. This result is stronger than local stability because it does not depend on the economy starting close to its balanced growth path.

A key contribution of the paper concerns the case in which inherited knowledge exerts a negative overall effect on the production of new ideas. It is important here to distinguish the direct and indirect channels. For a given level of R\&D expenditure, a larger knowledge stock may make new discoveries harder to obtain. At the same time, higher knowledge raises final output and hence the resources available for R\&D. The dynamics depend on the combined effect of these two channels. A negative direct effect alone does not necessarily destabilize the economy if the resulting increase in R\&D expenditure is strong enough. When this negative feedback is moderate, the balanced-growth steady state remains locally stable.  If it becomes sufficiently strong, the steady state can lose stability through oscillations. When knowledge temporarily exceeds its balanced-growth level, for example, the difficulty of producing new ideas may become so severe that new knowledge no longer compensates for obsolescence. The knowledge stock then falls below its balanced-growth level, making subsequent discoveries easier and causing an overshoot in the opposite direction. The interaction between the difficulty of producing new ideas and knowledge obsolescence can therefore transform smooth convergence into alternating periods of high and low innovation.

In a tractable special case, we establish analytically the existence of two-period innovation cycles and, under a stronger sufficient condition, periodic innovation paths of every prescribed period. These results suggest that the difficulty of finding new ideas can be a source not only of slower growth but also of endogenous fluctuations. They are existence results: periodic orbits need not attract all nearby trajectories, and our numerical analysis distinguishes attracting cycles from repelling periodic paths. We also emphasize that the analytical cycle result depends on a special configuration of the idea-production technology.  The parameter values used to illustrate complex dynamics constitute a theoretical stress test rather than an empirical calibration.

Our mechanism is related to, but distinct from, the literature on endogenous growth fluctuations. \citet{Guesnerie_Woodford_1993} survey mechanisms through which nonlinearities, expectations, and market imperfections can generate endogenous fluctuations. \citet{walde05} obtains growth cycles when uncertain R\&D success interacts with investment decisions. Most closely related to our discussion, \citet{EvansHonkapohjaRomer1998} construct  rational-expectations growth cycles in which complementarity between capital goods supports high- and low-growth regimes and an extrinsic random variable coordinates self-fulfilling switches between them. Their cycles depend on expectational indeterminacy and stochastic regime changes. Our cycles are instead deterministic and backward-looking. They arise from the nonlinear law of motion of knowledge around a unique balanced growth path and require neither extrinsic uncertainty nor self-fulfilling changes in expectations.

The model also has formal connections with the literature on human-capital accumulation, but the economic interpretation is different. \citet{MankiwRomerWeil1992} extend the Solow model by allowing a fixed share of output to accumulate a rival stock of human capital. A special parameterization of our accumulation equation has a similar mathematical form. In our model, however, the accumulated variable is nonrival technological knowledge rather than rival human capital. Likewise, \citet{Uzawa1965} and \citet{lucas1988} study two-sector models in which agents allocate time or human capital between current production and education. Their allocation decisions are endogenous, and human capital is a rival productive input. By contrast, our sectoral shares are fixed, the knowledge stock is nonrival, and the R\&D variable is a flow of research expenditure rather than an accumulated stock. These distinctions are important both for the interpretation of returns to scale and for the classification of our long-run growth mechanism.


The remainder of the paper is organized as follows. Section~\ref{section2} presents the model and clarifies the economic interpretation of knowledge and R\&D expenditure. Section~\ref{section3} characterizes the balanced growth path and develops the comparative statics. Section~\ref{section4} studies global convergence, local stability, instability, and innovation cycles. Section~\ref{section5} presents the numerical illustrations. Section~\ref{conclusion} concludes. Formal proofs are collected in the Appendix.

\section{A growth model with endogenous innovation}
\label{section2}
We develop a discrete-time dynamic growth model in which technological progress is endogenously determined by investment in research and development (R\&D). 
Assume that there is one consumption good and there are two sectors: a final goods sector and an R\&D sector. 

\paragraph{Consumption, production, and savings.}
The first sector produces the consumption good by using physical capital $K_t$, labor $L_{y,t}$, and knowledge $A_t$. 
Assume that the aggregate output is determined by
\begin{align}\label{production_function}
Y_t = F(A_t,K_t,L_{y,t}).
\end{align}
Initial values $K_0, A_0>0$ are exogenous.  We consider the Cobb--Douglas production function
\begin{assum}\label{functionF}$F(A,K,L_{y})=A^{\alpha_a} K^{\alpha_k} L_{y}^{\alpha_\ell}$, where \( \alpha_a, \alpha_k, \alpha_\ell \geq 0 \).
\end{assum}

\noindent A fixed fraction \( s \in [0,1] \) of the output is saved and invested each period so that the total saving is equal to $S_t = sY_t.$ 
\( S_t \) will later be allocated between capital investment (with a fraction $1 - \gamma$) and R\&D expenditure (with a fraction $\gamma$).

\paragraph{Population and labor allocation.} The total labor force \( L_t \) grows at a constant gross rate \( G_n > 0 \), starting from the initial level \( L_0 >0\): $L_t = L_0 G_n^t$ $\forall t$.

The labor force is allocated into two sectors: a sector producing the final good and a so-called R\&D sector: $L_{y,t} = \lambda_y L_t,\quad L_{a,t} = \lambda_a L_t,\quad \lambda_a + \lambda_y = 1,  \lambda_a\in [0,1]$, where \( L_{y,t} \) is used in the final good production, and \( L_{a,t} \) consists of people (researchers, scientists, engineers, ...) working in R\&D. The shares \( \lambda_y\geq 0 \) and \( \lambda_a \geq 0\) are constant and sum to one.

\paragraph{Capital accumulation.} The evolution of the physical capital stock is determined by a standard accumulation equation:
$K_{t+1} = (1 - \delta_k) K_t + (1 - \gamma) S_t.$ 
 Note that \( (1 - \gamma) \) corresponds to a fraction of the total savings that is allocated to capital investment, and capital depreciates at a rate \( \delta_k \in [0,1] \).

\paragraph{R\&D expenditure.} The remaining fraction \( \gamma \in [0,1] \) of savings is allocated to R\&D expenditure:
$H_{t+1}= \gamma S_t.$  The variable $H_{t+1}$ represents the flow of R\&D expenditure undertaken in period $t$, which contributes to the production of new knowledge available in the next period $t+1$, as shown in equation \eqref{A_accumulation} below. It can be interpreted as current research resources devoted to R\&D and innovation, including laboratory experiments, software development, data acquisition, patenting activities, clinical trials, ...

\paragraph{Knowledge stock.} We assume that the effective stock of ideas/knowledge is determined by
\begin{align}\label{A_accumulation}
A_{t+1} &= (1 - \delta_a) A_t + F_a(A_t, H_{t+1}, L_{a,t+1}). 
\end{align}
 The accumulation of the knowledge stock \( A_{t+1} \) is based on three components: (1)  retained knowledge from the previous period $A_t$, depreciated at a rate \( \delta_a \in [0,1] \), (2) new knowledge generated through a knowledge production function \( F_a \), which depends on the existing knowledge \( A_t \),  R\&D expenditure \( H_{t+1} \), and (3) the number of scientists \( L_{a,t+1} \). 

The knowledge stock $A_t$ is interpreted as the aggregate stock of disembodied technological knowledge, such as scientific discoveries, production know-how, engineering designs, organizational knowledge, etc. Knowledge is a nonrival input: the same stock can be used simultaneously in the production of final goods (see \eqref{production_function}) and in the creation of new knowledge (see \eqref{A_accumulation}).\footnote{However, our model is aggregate and does not explicitly model the ownership or pricing of ideas.}

Unless stated otherwise, we work under the following setup.
\begin{assum}\label{functionFa}
$F_a(A, H, L_{a}) = \mu_aA^{\gamma_a}H^{\gamma_h}L_{a}^{\gamma_\ell}$, where $\gamma_h,\gamma_\ell\geq 0$.
\end{assum}

The function $F_a$ takes a Cobb--Douglas form, where $\mu_a>0$ is a productivity parameter, and the exponents $\gamma_h,\gamma_\ell\ge0$ measure the elasticities of innovation with respect to R\&D expenditure and research labor, respectively. 

The parameter $\gamma_a$ captures the direct effect of the inherited knowledge stock on the production of new ideas, holding other research inputs fixed (for an indirect effect, see Remark \ref{indirect_effect} below). It may be either positive or negative. A positive value ($\gamma_a>0$) reflects knowledge spillovers, whereby a larger stock of existing knowledge facilitates further innovation. By contrast, a negative value ($\gamma_a<0$) captures the idea that, conditional on a given level of R\&D expenditure and research labor, the existing knowledge stock makes the discovery of additional ideas progressively more difficult, consistent with the notion that ``ideas are getting harder to find'' \citep{Bloom2020}.

To sum up, we have the following economic system:
\begin{subequations}\label{system}
\begin{align}
    K_0, A_0,L_0& \text{ are given and positive }\\
\label{2b}   \text{Output production: }  Y_t&=A_t^{\alpha_a}K_t^{\alpha_k}L_{y,t}^{\alpha_\ell}\\
    \text{Saving: } S_t&=sY_t\\
    \text{Population: }L_{t}&=L_0G_n^t\\
    \text{ Sectoral labor: } L_{y,t}&=\lambda_yL_t, \quad  L_{a,t}=\lambda_aL_t, \quad \lambda_a+\lambda_y=1\\
     \text{Physical capital: }   K_{t+1}&= (1-\delta_k)K_t+(1-\gamma)S_t\\
  \label{2g}  \text{R\&D expenditure: }  H_{t+1}&=\gamma S_t\\
\label{dynamicsA}\text{Knowledge: }  A_{t+1}&=(1-\delta_a)A_t+\mu_aA_t^{\gamma_a}H_{t+1}^{\gamma_h}L_{a,t+1}^{\gamma_\ell}
\end{align}
\end{subequations}

\begin{remark}\label{indirect_effect}
Substituting the R\&D expenditure $H_{t+1}=\gamma sY_t
=\gamma sA_t^{\alpha_a}K_t^{\alpha_k}L_{y,t}^{\alpha_\ell},$
into the knowledge accumulation equation \eqref{dynamicsA} yields
\begin{align}\label{AGamma}
A_{t+1}
&=(1-\delta_a)A_t
+\big[\mu_a(\gamma s)^{\gamma_h}
\lambda_y^{\alpha_\ell\gamma_h}
\lambda_a^{\gamma_\ell}
G_n^{\gamma_\ell}\big]
A_t^{\gamma_a+\alpha_a\gamma_h}
K_t^{\alpha_k\gamma_h}
L_t^{\alpha_\ell\gamma_h+\gamma_\ell}.
\end{align}
Therefore, the effective elasticity of new knowledge with respect to the inherited knowledge stock is $\Gamma\equiv\gamma_a+\alpha_a\gamma_h.$ Hence, while $\gamma_a$ measures the \emph{direct} effect of the inherited knowledge stock on new knowledge, holding R\&D expenditure fixed, the effective elasticity $\Gamma$ also incorporates the indirect channel through output and endogenous R\&D expenditure.
\end{remark}
\begin{remark}[Returns to scale]\label{remark:returns}
Since $A_t$ is nonrival in our baseline interpretation, the natural constant-returns benchmark for final-good production concerns the rival inputs: $\alpha_k+\alpha_\ell=1$. By contrast, $\alpha_a+\alpha_k+\alpha_\ell=1$ means that $F$ is homogeneous of degree one when knowledge, capital, and labor are all scaled together. We impose neither restriction. The case $\alpha_a=\alpha_\ell$ corresponds to the labor-augmenting form $F(A,K,L_y)=K^{\alpha_k}(AL_y)^{\alpha_\ell}.$

Similarly, for the R\&D technology, $\gamma_h+\gamma_\ell=1$ corresponds to constant returns in the contemporaneous rival research inputs $H$ and $L_a$. The parameter $\gamma_a$ instead measures the effect of inherited knowledge on new-idea production and may be positive or negative.
\end{remark}


\paragraph{Some special cases.} Before investigating the dynamics of the economy, it is useful to mention some special cases of our framework.
\begin{case}{\normalfont
When $\gamma_h=\gamma_\ell=0, \gamma_a=1$ and $\mu_a > 0$, equation (\ref{dynamicsA}) becomes $A_{t+1}=(1-\delta_a+\mu_a)A_t$. We recover the Solow model with exogenous growth of technology ``falling from the sky" and its net growth rate is given by $\mu_a - \delta_a $. The variable $A_t$ represents productivity.}
\end{case}

\begin{case}{\normalfont
When $\delta_a=0=\gamma_h$, equation (\ref{dynamicsA}) becomes $\frac{A_{t+1}-A_t}{A_t}=(\mu_a\lambda_a^{\gamma_\ell})A_t^{\gamma_a-1}L_{t+1}^{\gamma_\ell}$, which can be viewed as a discrete-time version of the model of ideas in \citet[Section B]{JonesAER2022}. Following \citet{JonesAER2022}, variable $A_t$ in our framework can be interpreted as the idea. The value $\gamma_a-1$ in our paper corresponds to $-\beta<0$ in equation (5) in \citet{JonesAER2022}.
}
\end{case}
\begin{case}\label{special3}
\normalfont {When $\mu_a=1, \gamma_a=\gamma_{\ell}=0$ and $\gamma_h=1$, equation \eqref{dynamicsA} becomes $A_{t+1}=(1-\delta_a)A_t+\gamma sY_t$. So, the variable $A_t$ can be interpreted as the human capital stock as in \citet{MankiwRomerWeil1992}. By the way, we recover a discrete-time version of the augmented Solow model in \citet[Section 2.A]{MankiwRomerWeil1992}, which includes accumulation of both physical and human capitals.\footnote{The growth factor $G_n$ in our model corresponds to n+g in \citet[Section 2.A]{MankiwRomerWeil1992}. Rewriting their model using our notation, their equation (8) requires that $\alpha_{\ell}=1-\alpha_k-\alpha_a$.}}
\end{case}

For the remainder of the paper, without an explicit statement, we require the following assumption.
\begin{assum}\label{basicassumptions}$G_n>0$, $s\in (0,1]$, $\delta_k,\delta_a\in (0,1]$, $\gamma\in [0,1)$,\footnote{When $\gamma=0$, we adopt the convention that $\gamma_h=0$ and remove $H_t$ from the model.} $\lambda_a\in (0,1)$,  $\alpha_k<1$, $\lambda_y\in (0,1]$, $\mu_a>0$.
\end{assum}

\section{Balanced Growth Path and Steady-State}\label{section3}

We study the dynamics of our economic variables. We first introduce the notion of a Balanced Growth Path.

\subsection{Balanced Growth Path: existence and characterization}

\begin{definition}[\emph{Balanced Growth Path and Steady State}]
Given $L_0>0,G_n > 0$, a balanced growth path (BGP) is a sequence 
$(K_t, L_t, L_{y,t}, L_{a,t}, H_t, A_t, Y_t)$ 
that satisfies the system (\ref{system}) and $X_t = X_* G_x^t>0 \text{ } \forall X \in \{K,A,Y\}, H_{t+1}=H_*G_h^t\geq 0$ $\forall t$, where $G_x > 0$ corresponds to the gross rate of growth of the variable $X$ and $g_x=G_x-1 $ the net rate of growth, which may be positive or negative. 

The constant $X_*$ is called the normalized steady-state value (or simply the steady-state value) of the variable $X$, for $X\in\{K,H,A,Y\}$.
\end{definition}



Before characterizing the BGP, we introduce some notations and show a necessary condition for the existence of a BGP. Denote \begin{subequations}
\begin{align}\phi_a&\equiv(1-\gamma_a)(1-\alpha_k)-\alpha_a\gamma_h,&\phi_n&\equiv\alpha_\ell\gamma_h+(1-\alpha_k)\gamma_\ell,\\
 \label{xkxa}
X_a &\equiv \mu_a (\gamma s)^{\gamma_h} (\lambda_y)^{\alpha_\ell \gamma_h} (\lambda_a G_n)^{\gamma_\ell} L_0^{\alpha_\ell \gamma_h + \gamma_\ell}, & X_k &\equiv (1 - \gamma) s \lambda_y^{\alpha_\ell} L_0^{\alpha_\ell}.
 \end{align}
 \end{subequations}

\begin{lemma}\label{lemma1}
Consider the system (\ref{system}). Let Assumptions \ref{functionF}-\ref{basicassumptions} be satisfied. If there exists a BGP, then we have $G_k > 1-\delta_k$, $G_a > 1 - \delta_a$,
\begin{subequations}
\begin{align}
\label{Gky}G_k &= G_y,  \\
\label{Gank}G_k^{1-\alpha_k}&=G_a^{\alpha_a} G_n^{\alpha_\ell},\\
\label{gan}
 G_a^{\phi_a}&= G_n^{\phi_n}.
\end{align}
\end{subequations}
and the steady state values \( A_*, K_*, Y_* \) are given by 
\begin{subequations}
\begin{align}\label{formulaA_ss}
A_*^{\frac{(1-\gamma_a)(1-\alpha_k)-\alpha_a \gamma_h}{1 - \alpha_k}} 
&= \frac{X_a}{G_a - 1 + \delta_a}  \left( \frac{X_k}{G_k - 1 + \delta_k} \right)^{\frac{\alpha_k \gamma_h}{1 - \alpha_k}}, \\
\label{formulaK_ss}K_* &= \left( \frac{X_k A_*^{\alpha_a}}{G_k - 1 + \delta_k} \right)^{\frac{1}{1 - \alpha_k}},\\
 \label{formulaY_ss}Y_* &= \frac{(G_k - 1 + \delta_k) K_*}{(1 - \gamma) s}.
\end{align}
\end{subequations}
\end{lemma}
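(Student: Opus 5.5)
Substituting the BGP form $X_t=X_*G_x^t$ into each equation of system (\ref{system}) and reading off the growth factors and the level constants turns every law of motion into an identity in $t$. From an identity of the form $c_1 G_1^t = c_2 G_2^t$, valid for all $t$ with $c_1,c_2>0$, we get $G_1=G_2$ and $c_1=c_2$; this follows by comparing $t=0$ and $t=1$. When an identity has three terms, we first divide through by one growing term.

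\textbf{Capital and output.} The capital equation gives $K_*G_k^{t+1}=(1-\delta_k)K_*G_k^t+(1-\gamma)sY_*G_y^t$. Dividing by $G_k^t$ yields $K_*(G_k-1+\delta_k)=(1-\gamma)sY_*(G_y/G_k)^t$. The left side is constant in $t$ and the right side is positive, because $\gamma<1$, $s>0$ and $Y_*>0$. Hence $G_y=G_k$, which is (\ref{Gky}). It also gives $G_k-1+\delta_k>0$ and formula (\ref{formulaY_ss}).

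Next, the production function (\ref{2b}) gives $Y_*G_y^t=A_*^{\alpha_a}K_*^{\alpha_k}(\lambda_yL_0)^{\alpha_\ell}\,(G_a^{\alpha_a}G_k^{\alpha_k}G_n^{\alpha_\ell})^t$. Matching growth factors and using $G_y=G_k$ yields (\ref{Gank}). Matching levels, combined with (\ref{formulaY_ss}), gives $(G_k-1+\delta_k)K_*^{1-\alpha_k}=X_kA_*^{\alpha_a}$, using the definition of $X_k$. Since $\alpha_k<1$, this gives (\ref{formulaK_ss}).

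\textbf{Knowledge.} I would use the reduced form (\ref{AGamma}) of Remark \ref{indirect_effect}. Substituting and dividing by $G_a^t$ gives
$$A_*(G_a-1+\delta_a)=X_a\,A_*^{\Gamma}K_*^{\alpha_k\gamma_h}\Big(G_a^{\Gamma-1}G_k^{\alpha_k\gamma_h}G_n^{\alpha_\ell\gamma_h+\gamma_\ell}\Big)^t .$$
Here I must check that the constant in (\ref{AGamma}) times $L_0^{\alpha_\ell\gamma_h+\gamma_\ell}$ is exactly $X_a$. It is, because $L_{a,t+1}=\lambda_aL_0G_nG_n^t$.

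The right side is positive whenever $\gamma>0$. If $\gamma=0$, the convention $\gamma_h=0$ together with $\mu_a>0$, $\lambda_a>0$ still keeps it positive. Hence $G_a>1-\delta_a$, and the bracketed growth factor must equal $1$:
$$G_a^{1-\Gamma}=G_k^{\alpha_k\gamma_h}G_n^{\alpha_\ell\gamma_h+\gamma_\ell}.$$

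\textbf{Eliminating $G_k$ and solving for $A_*$.} From (\ref{Gank}), $G_k=(G_a^{\alpha_a}G_n^{\alpha_\ell})^{1/(1-\alpha_k)}$. Raising the last display to the power $1-\alpha_k$ and substituting gives
$$G_a^{(1-\Gamma)(1-\alpha_k)-\alpha_k\alpha_a\gamma_h}=G_n^{\alpha_k\alpha_\ell\gamma_h+(1-\alpha_k)(\alpha_\ell\gamma_h+\gamma_\ell)}.$$
The exponent on the left simplifies to $(1-\gamma_a)(1-\alpha_k)-\alpha_a\gamma_h=\phi_a$. The exponent on the right simplifies to $\alpha_\ell\gamma_h+(1-\alpha_k)\gamma_\ell=\phi_n$. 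This is (\ref{gan}).

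For the levels, we have $A_*^{1-\Gamma}=\frac{X_a}{G_a-1+\delta_a}K_*^{\alpha_k\gamma_h}$. Inserting (\ref{formulaK_ss}) gives $K_*^{\alpha_k\gamma_h}=\big(X_k/(G_k-1+\delta_k)\big)^{\alpha_k\gamma_h/(1-\alpha_k)}A_*^{\alpha_a\alpha_k\gamma_h/(1-\alpha_k)}$. Collecting powers of $A_*$ gives the exponent $1-\Gamma-\frac{\alpha_a\alpha_k\gamma_h}{1-\alpha_k}=\frac{\phi_a}{1-\alpha_k}$, which is exactly (\ref{formulaA_ss}).

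\textbf{Main obstacle.} No step is deep. The only points requiring care are the exponent bookkeeping in the last two substitutions and the justification that a positive constant times $c^t$ equal to a constant forces $c=1$. The degenerate case $\gamma=0$ needs the stated convention.
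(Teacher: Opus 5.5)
Your proof is correct and follows essentially the same route as the paper. You substitute the BGP form into the capital, production and knowledge equations, force each time-dependent factor to equal one, and then eliminate $G_k$ and $K_*$ to obtain \eqref{gan} and \eqref{formulaA_ss}. The differences are minor: you compare $t=0$ and $t=1$ where the paper uses a limit-based case analysis, you match growth factors directly in the production function rather than after substituting it into the capital equation, and you explicitly justify $G_a>1-\delta_a$ (including the $\gamma=0$ convention), which the paper leaves implicit.
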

\begin{proof} See Appendix \ref{appendix1}.\end{proof}

We state the main result of this section, which completely characterizes the BGP.
\begin{theorem}[\emph{Characterization of BGP}]\label{theorem1}Let Assumptions \ref{functionF}-\ref{basicassumptions} be satisfied.

\begin{enumerate}
   \item \label{theorem1_1} Suppose 
$\phi_a\not=0$. Then, there exists a BGP if and only if
\begin{align}
\label{Ga*}
G_a^* &\equiv G_n^{\psi_a}
> 1 - \delta_a,   G_k^* \equiv G_n^{\psi_k} > 1 - \delta_k,
\text{ where }
\psi_a \equiv
\frac{\phi_n}{\phi_a},
\psi_k\equiv \frac{\alpha_\ell+\alpha_a\psi_a}{1 - \alpha_k}.  
\end{align}
%
Moreover, under \eqref{Ga*}, the BGP is unique. The growth rates \( (G_y, G_k, G_a) \) of the BGP are determined by
$G_a = G_a^*,  G_k = G_y = G_k^*$ 
while the steady state values \( A_*, K_*, Y_* \) are given by \eqref{formulaA_ss}, \eqref{formulaK_ss} and \eqref{formulaY_ss}.

\item Suppose $\phi_a\equiv(1-\gamma_a)(1-\alpha_k)-\alpha_a\gamma_h=0$.
\begin{enumerate}
 \item A necessary condition for a BGP is $G_n^{\phi_n}=1,$ 
 equivalently $G_n=1$ or $\phi_n=0$. If this condition fails, no BGP exists.
 \item Suppose $G_n^{\phi_n}=1$. Then, there is a continuum of BGPs indexed by $A_*\in (0,\infty)$ and each BGP is determined by \eqref{formulaK_ss},  \eqref{formulaY_ss} and 
 \begin{subequations}
\begin{align}\label{boundara_Ga}
  \big(G_a-(1-\delta_a)\big)\big[G_n^{\frac{\alpha_\ell}{1-\alpha_k}}
        G_a^{\frac{\alpha_a}{1-\alpha_k}}-(1-\delta_k)\big]^{\frac{\alpha_k\gamma_h}{1-\alpha_k}}=X_aX_k^{\frac{\alpha_k\gamma_h}{1-\alpha_k}},\\
\label{boundara_Ga2}      G_k= G_n^{\frac{\alpha_\ell}{1-\alpha_k}}
        G_a^{\frac{\alpha_a}{1-\alpha_k}}, \quad G_y=G_k.
  \end{align} 
   \end{subequations}
Note that this system \eqref{boundara_Ga}-\eqref{boundara_Ga2} has a unique solution  $(G_a,G_k,G_y)$ and it satisfies $G_a>1-\delta_a,G_k>1-\delta_k$. 
\end{enumerate}
\end{enumerate}
    
\end{theorem}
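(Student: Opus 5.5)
The proof is built on Lemma~\ref{lemma1}, which gives necessary conditions: any BGP must satisfy \eqref{Gky}--\eqref{gan}, together with $G_k>1-\delta_k$, $G_a>1-\delta_a$ and the steady-state formulas \eqref{formulaA_ss}--\eqref{formulaY_ss}. The remaining work is to solve these relations for the growth factors and then to verify sufficiency by construction.

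\textbf{Part 1, $\phi_a\neq 0$.} For necessity, equation \eqref{gan} gives $G_a=G_n^{\phi_n/\phi_a}=G_a^*$ uniquely, because $G_a>0$ and $x\mapsto x^{\phi_a}$ is a bijection of $(0,\infty)$. Equation \eqref{Gank} with $\alpha_k<1$ then gives
\[
G_k=G_a^{\alpha_a/(1-\alpha_k)}G_n^{\alpha_\ell/(1-\alpha_k)}=G_n^{\psi_k}=G_k^*,
\]
and \eqref{Gky} gives $G_y=G_k^*$. The inequalities of Lemma~\ref{lemma1} then become exactly \eqref{Ga*}. The steady-state values are forced by \eqref{formulaA_ss}--\eqref{formulaY_ss}. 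For \eqref{formulaA_ss}, the exponent of $A_*$ is $\phi_a/(1-\alpha_k)\neq0$ and the right-hand side is positive under \eqref{Ga*}, so $A_*$ is unique. This proves uniqueness.

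For sufficiency, assume \eqref{Ga*}, define $G_a^*$, $G_k^*$ and $A_*,K_*,Y_*$ by the formulas, and set $X_t=X_*G_x^t$ and $H_{t+1}=\gamma sY_*G_y^t$. We then check each equation of \eqref{system}.
\begin{itemize}
\item \emph{Output.} Equation \eqref{2b} requires $Y_*=A_*^{\alpha_a}K_*^{\alpha_k}(\lambda_yL_0)^{\alpha_\ell}$, which holds because of \eqref{formulaK_ss}--\eqref{formulaY_ss}. The time factors also match, since $G_y=G_a^{\alpha_a}G_k^{\alpha_k}G_n^{\alpha_\ell}$ is equivalent to \eqref{Gank} when $G_y=G_k$.
\item \emph{Capital.} Dividing by $G_k^t$ reduces the capital equation to $(G_k-1+\delta_k)K_*=(1-\gamma)sY_*$, which is \eqref{formulaY_ss}.
\item \emph{Knowledge.} Dividing the knowledge equation by $G_a^t$, using \eqref{AGamma}, gives $(G_a-1+\delta_a)A_*=X_aA_*^{\Gamma}K_*^{\alpha_k\gamma_h}$. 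The time-dependence cancels precisely because $G_a^{1-\Gamma}=G_k^{\alpha_k\gamma_h}G_n^{\alpha_\ell\gamma_h+\gamma_\ell}$. To see this, substitute $G_k$ and multiply by $1-\alpha_k$; this reduces the condition to $G_a^{\phi_a}=G_n^{\phi_n}$. After substituting \eqref{formulaK_ss}, the level equation is exactly \eqref{formulaA_ss}.
\end{itemize}
The main obstacle is this exponent bookkeeping. I would write everything in logs and check that the coefficient on $t$ is $\phi_a\ln G_a-\phi_n\ln G_n$.

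\textbf{Part 2(a), $\phi_a=0$.} Equation \eqref{gan} reads $1=G_n^{\phi_n}$. If $G_n^{\phi_n}\neq 1$ this contradicts Lemma~\ref{lemma1}, so no BGP exists.

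\textbf{Part 2(b), $G_n^{\phi_n}=1$.} Now \eqref{gan} holds for every $G_a$, and \eqref{formulaA_ss} has $A_*^0=1$ on the left. That equation therefore no longer pins down $A_*$; instead it is an equation in $G_a$. Substituting $G_k$ from \eqref{Gank} into it gives exactly \eqref{boundara_Ga}. Take any $A_*>0$ and let $G_a$ solve \eqref{boundara_Ga}; then the verification from Part 1 goes through verbatim, because the time-cancellation condition is trivially satisfied. This yields the continuum of BGPs.

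For existence and uniqueness of $G_a$, write $x_0=\max\{1-\delta_a,\ \text{the }G_a\text{ at which the bracket equals }0\}$; the bracket is increasing in $G_a$ since $\alpha_a\ge0$. On $(x_0,\infty)$ the left-hand side of \eqref{boundara_Ga} is continuous and strictly increasing, because its first factor is strictly increasing and its second factor is nondecreasing and positive. It tends to $0$ at $x_0$ (one of the two factors vanishes there, and the exponent $\alpha_k\gamma_h/(1-\alpha_k)\ge0$ handles the degenerate case) and to $\infty$ as $G_a\to\infty$. The right-hand side is positive, so the intermediate value theorem gives a unique root. That root satisfies $G_a>1-\delta_a$ and, since the bracket is positive there, $G_k>1-\delta_k$. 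The only care needed is the degenerate case $\alpha_k\gamma_h=0$, in which the bracket drops out. There the root is simply $G_a=1-\delta_a+X_a$, but $G_k>1-\delta_k$ is then not automatic. I would argue it from $K_*>0$ together with the capital equation, since $K_{t+1}>(1-\delta_k)K_t$ on any path with $s(1-\gamma)>0$.
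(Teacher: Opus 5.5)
Your route is the paper's own: necessity from Lemma~\ref{lemma1}, sufficiency by substituting the candidate path into \eqref{system}, and, when $\phi_a=0$, reading \eqref{formulaA_ss} (whose left side becomes $A_*^0=1$) as an equation in $G_a$. Your exponent bookkeeping is correct: the coefficient on $t$ is indeed $\phi_a\ln G_a-\phi_n\ln G_n$. Your monotonicity and intermediate-value argument for \eqref{boundara_Ga} when $\alpha_k\gamma_h>0$ is also correct, and more explicit than the paper, which only asserts that \eqref{boundara_Ga} has a unique solution. One small point: if $\alpha_a=0$ the bracket is constant, so ``the $G_a$ at which it vanishes'' need not exist. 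In that case, however, $\phi_a=0$ forces $\gamma_a=1$, and $G_n^{\phi_n}=1$ with $\gamma_h>0$ forces $G_k=1$. The bracket then equals $\delta_k>0$, and $x_0=1-\delta_a$ works.

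The genuine gap is your last step, the case $\alpha_k\gamma_h=0$. Deriving $G_k>1-\delta_k$ from $K_{t+1}>(1-\delta_k)K_t$ is circular. That inequality gives $G_k>1-\delta_k$ only along a path $K_t=K_*G_k^t$ already known to solve the system. Here you need the inequality beforehand, because $K_*$ in \eqref{formulaK_ss} is undefined unless $G_k-1+\delta_k>0$.

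Nor can the step be repaired, because the claim is false there. Take $\gamma_h=0$ and $\gamma_a=1$, so $\phi_a=0$, and take $G_n=1$, so $G_n^{\phi_n}=1$. Then $A_{t+1}=(1-\delta_a+X_a)A_t$ exactly, \eqref{boundara_Ga} gives $G_a=1-\delta_a+X_a$, and \eqref{boundara_Ga2} gives $G_k=G_a^{\alpha_a/(1-\alpha_k)}$. With $\delta_a=0.9$, $X_a=0.1$, $\alpha_a=0.5$, $\alpha_k=0.3$ and $\delta_k=0.05$, one gets $G_a=0.2$ and $G_k\approx 0.32<0.95=1-\delta_k$. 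By Lemma~\ref{lemma1}, no BGP exists, contradicting the continuum asserted in part 2(b).

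So when $\alpha_k\gamma_h=0$ the statement needs the extra feasibility hypothesis $G_n^{\alpha_\ell/(1-\alpha_k)}(1-\delta_a+X_a)^{\alpha_a/(1-\alpha_k)}>1-\delta_k$. Under it, your Part 1-style verification goes through verbatim; without it, no BGP exists. The paper's proof does not address this case either.
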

\begin{proof}
\begin{enumerate}
    \item When $\phi_a\not=0$, the necessity follows from Lemma~\ref{lemma1}. In particular, $G_a^{\phi_a}=G_n^{\phi_n}$ and $\phi_a\neq0$ give $G_a=G_n^{\phi_n/\phi_a}=G_a^*$. Substitution in \eqref{Gank} then gives $G_k=G_y=G_k^*$. The two strict inequalities in \eqref{Ga*} follow from the strictly positive flows of new knowledge and gross investment. 

Conversely, suppose \eqref{Ga*} holds. Equation \eqref{formulaA_ss} has a unique positive solution because its right-hand side is positive and its exponent is $\phi_a/(1-\alpha_k)\neq0$. Equations \eqref{formulaK_ss} and \eqref{formulaY_ss} then determine unique positive values of $K_*$ and $Y_*$, and $H_*=\gamma sY_*$. By the definition of $G_a^*$ and $G_k^*$ in \eqref{Ga*}, we get imply \eqref{Gank} and \eqref{gan}. Then,  substituting the resulting constants and growth factors into \eqref{system} verifies every equation for all $t$. This also proves uniqueness.

\item When $\phi_a=0$, condition \eqref{gan} implies that $G_n^{\phi_n}=1$.
Suppose now $G_n^{\phi_n}=1$. Since $\phi_a=0$, condition \eqref{formulaA_ss} implies that $ \big(G_a-(1-\delta_a)\big)\big[G_k-(1-\delta_k)\big]^{\frac{\alpha_k\gamma_h}{1-\alpha_k}}=X_aX_k^{\frac{\alpha_k\gamma_h}{1-\alpha_k}}$. By \eqref{Gank}, we have $G_k= G_n^{\frac{\alpha_\ell}{1-\alpha_k}}
        G_a^{\frac{\alpha_a}{1-\alpha_k}}$. Therefore, we get \eqref{boundara_Ga}, which has a unique solution $G_a$. 
\end{enumerate}
\end{proof}

Theorem~\ref{theorem1} characterizes the existence, uniqueness and multiplicity of balanced growth paths. In the generic case $\phi_a\neq0$, the conditions $G_n^{\psi_a}>1-\delta_a$ and $G_n^{\psi_k}>1-\delta_k$ ensure that the implied flows of new knowledge and gross physical investment are strictly positive. Equality in either condition would imply a zero flow, whereas a strict reversal would require a negative flow, both incompatible with strictly positive innovation and investment. These conditions do not require gross investment to cover depreciation: a BGP may feature declining stocks, provided that each stock declines more slowly than it would under depreciation alone.

The generic condition $\phi_a\equiv(1-\gamma_a)(1-\alpha_k)-\alpha_a\gamma_h\neq0$ rules out a singularity in the balanced-growth equations and ensures uniqueness whenever the two feasibility conditions hold. It is satisfied, with $\phi_a>0$, for example, when $\gamma_a+\gamma_h<1$ and $\alpha_k+\alpha_a<1$. In the degree-one homogeneous benchmark, where $\alpha_a+\alpha_k+\alpha_\ell=1=\gamma_a+\gamma_h+\gamma_\ell$, we have $\phi_a=\phi_n=\alpha_\ell\gamma_h+(1-\alpha_k)\gamma_\ell$. Provided that $\phi_n>0$, it follows that $\psi_a=\psi_k=1$. Thus, whenever the BGP exists, knowledge, capital, and output grow at the same rate as population: $G_a=G_k=G_y=G_n$.

On the knife-edge $\phi_a=0$, existence requires the compatibility condition $G_n^{\phi_n}=1$, equivalently $G_n=1$ or $\phi_n=0$. When this condition and the strict feasibility conditions are satisfied, the growth factors are uniquely determined, but the normalized steady-state levels form a continuum indexed by $A_*>0$, with $K_*$ determined by \eqref{formulaK_ss}. This multiplicity concerns BGPs associated with different admissible initial stocks, not multiple trajectories starting from the same initial conditions. For prescribed stocks $(A_0,K_0)$, the economy lies on a BGP from date zero only if $(A_0,K_0)=(A_*,K_*)$ for one member of this continuum.

\subsection{Comparative statics}

Theorem \ref{theorem1} also allows us to  provide comparative statics in order to understand the interplay between population, technological, and economic growth.
We now investigate how the steady states $Y_*, K_*, A_*$ and the growth rates depend on the parameters of the model.

\begin{proposition}[Population-driven long-run growth (semi-endogenous)]\label{prop1}
Suppose that the conditions of Theorem~\ref{theorem1}'s part \ref{theorem1_1}
hold and that $\phi_n>0$. 
At the BGP, the following statements hold.
\begin{enumerate}
 \item\label{prop11} The population elasticities of the growth factors are
 \begin{equation}\label{population_elasticities}
  \frac{\partial\ln G_a}{\partial\ln G_n}
   =\psi_a,\qquad
  \frac{\partial\ln G_y}{\partial\ln G_n}
   =\psi_k.
 \end{equation}
Moreover, $G_a,G_y\to1$ as $G_n\to1$ along values for which the BGP remains feasible.


 \item\label{prop12} The elasticities of the normalized steady state with respect to the saving
 rate $s$ and research efficiency $\mu_a$ are 
\begin{subequations}
\begin{align}
\label{population_elasticities1} \frac{\partial\ln A_*}{\partial\ln s}&=\frac{\gamma_h}{\phi_a}, & \frac{\partial\ln K_*}{\partial\ln s}&=\frac{1-\gamma_a}{\phi_a}, &\frac{\partial\ln Y_*}{\partial\ln s}&=\frac{\alpha_k(1-\gamma_a)+\alpha_a\gamma_h}{\phi_a},\\
\label{population_elasticities2}  \frac{\partial\ln A_*}{\partial\ln \mu_a}&=\frac{1-\alpha_k}{\phi_a}, & \frac{\partial\ln K_*}{\partial\ln \mu_a}&=\frac{\alpha_a}{\phi_a}, &\frac{\partial\ln Y_*}{\partial\ln \mu_a}&=\frac{\alpha_a}{\phi_a}.
\end{align}
 \end{subequations}
\end{enumerate}
 \end{proposition}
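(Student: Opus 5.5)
The plan is to differentiate the closed-form expressions supplied by Theorem~\ref{theorem1}, part~\ref{theorem1_1}, and Lemma~\ref{lemma1}, after taking logarithms. The one structural fact to establish first is that the growth factors $G_a^*=G_n^{\psi_a}$ and $G_k^*=G_n^{\psi_k}$ depend only on $G_n$ and on the elasticities $(\alpha_a,\alpha_k,\alpha_\ell,\gamma_a,\gamma_h,\gamma_\ell)$. They do not involve $s$ or $\mu_a$. Hence the feasibility conditions \eqref{Ga*} are unaffected by small changes in $s$ or $\mu_a$, and the BGP persists in a neighbourhood of the given parameters. The terms $G_a-1+\delta_a$ and $G_k-1+\delta_k$ appearing in \eqref{formulaA_ss}--\eqref{formulaY_ss} are therefore constants for these comparative statics.

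\emph{Part~\ref{prop11}.} By Theorem~\ref{theorem1}, $\ln G_a=\psi_a\ln G_n$ and $\ln G_y=\ln G_k=\psi_k\ln G_n$. Here $\psi_a=\phi_n/\phi_a$ and $\psi_k=(\alpha_\ell+\alpha_a\psi_a)/(1-\alpha_k)$ are constants that do not involve $G_n$. Differentiating with respect to $\ln G_n$ gives \eqref{population_elasticities} immediately. Since $G_n\mapsto G_n^{\psi}$ is continuous and equals $1$ at $G_n=1$, we get $G_a,G_y\to1$ as $G_n\to1$ along feasible values. The assumption $\phi_n>0$ only fixes the sign of $\psi_a$ as that of $\phi_a$, and it gives the interpretation that faster population growth raises growth when $\phi_a>0$.

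\emph{Part~\ref{prop12}.} Take logs in \eqref{formulaA_ss}. By \eqref{xkxa}, $\ln X_a=\ln\mu_a+\gamma_h\ln s+\text{const}$ and $\ln X_k=\ln s+\text{const}$, where the constants collect $\gamma,\lambda_y,\lambda_a,G_n,L_0$. This yields
\[
\frac{\phi_a}{1-\alpha_k}\ln A_*=\ln\mu_a+\Big(\gamma_h+\frac{\alpha_k\gamma_h}{1-\alpha_k}\Big)\ln s+\text{const}
=\ln\mu_a+\frac{\gamma_h}{1-\alpha_k}\ln s+\text{const},
\]
so $\partial\ln A_*/\partial\ln s=\gamma_h/\phi_a$ and $\partial\ln A_*/\partial\ln\mu_a=(1-\alpha_k)/\phi_a$.

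Next, \eqref{formulaK_ss} gives $(1-\alpha_k)\ln K_*=\ln s+\alpha_a\ln A_*+\text{const}$. Substituting the previous elasticities, the coefficient on $\ln s$ is $(\phi_a+\alpha_a\gamma_h)/((1-\alpha_k)\phi_a)$. The identity $\phi_a+\alpha_a\gamma_h=(1-\gamma_a)(1-\alpha_k)$ reduces this to $(1-\gamma_a)/\phi_a$. The coefficient on $\ln\mu_a$ is $\alpha_a/\phi_a$.

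Finally, \eqref{formulaY_ss} gives $\ln Y_*=\ln K_*-\ln s+\text{const}$. Hence $\partial\ln Y_*/\partial\ln\mu_a=\alpha_a/\phi_a$, and
\[
\frac{\partial\ln Y_*}{\partial\ln s}=\frac{1-\gamma_a-\phi_a}{\phi_a}=\frac{\alpha_k(1-\gamma_a)+\alpha_a\gamma_h}{\phi_a},
\]
again using the definition of $\phi_a$.

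There is no real obstacle. The only points requiring care are the bookkeeping of which quantities are held fixed and the observation that the factor $1/s$ in \eqref{formulaY_ss} must be included. In particular, one must confirm that $G_a$, $G_k$, and hence the feasibility region, do not move with $s$ or $\mu_a$. That fact is what justifies treating the denominators in \eqref{formulaA_ss}--\eqref{formulaY_ss} as constants.
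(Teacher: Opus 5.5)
Your proposal is correct and follows the same route as the paper. The paper reads the population elasticities directly off $G_a^*=G_n^{\psi_a}$ and $G_k^*=G_n^{\psi_k}$, and obtains the level elasticities by log-differentiating \eqref{formulaA_ss}--\eqref{formulaY_ss}. Your bookkeeping checks out, including the identity $\phi_a+\alpha_a\gamma_h=(1-\gamma_a)(1-\alpha_k)$, the extra $1/s$ factor in $Y_*$, and the observation that the growth factors do not move with $s$ or $\mu_a$; it supplies the details the paper leaves implicit.
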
 
\begin{proof}\eqref{population_elasticities} is a direct consequence \eqref{Ga*}. According to \eqref{formulaA_ss}-\eqref{formulaY_ss}, we can compute \eqref{population_elasticities1} and \eqref{population_elasticities2}.
\end{proof}
In the generic case $\phi_a\neq0$, the growth factors depend on population growth and the elasticities of the production and innovation technologies, but not on the saving rate $s$, research efficiency $\mu_a$, or the fixed allocation shares $\gamma$, $\lambda_a$, and $\lambda_y$. These latter parameters affect the levels of the BGP rather than its growth rates. This separation is the central semi-endogenous growth implication of the model \citep{Jones1995,Segerstrom1998,Young1998}, in contrast to the strong scale effects in \citet{Romer1990}.

According to the point \ref{prop11} of Proposition \ref{prop1},  long-run growth is mainly driven by population growth and the direction of the demographic effect depends on the sign of $\phi_a$. When $\phi_a>0$, both $\psi_a$ and $\psi_k$ are nonnegative, so faster population growth weakly raises the growth factors of knowledge and output. The effect on knowledge growth is strictly positive whenever $\phi_n>0$. By contrast, when $\phi_a<0$ and $\phi_n>0$, knowledge growth is decreasing in population growth, whereas the response of output growth depends on the sign of $\psi_k=(\alpha_\ell+\alpha_a\psi_a)/(1-\alpha_k)$. Thus, outside the usual regime $\phi_a>0$, the demographic effects on knowledge and output need not have the same sign. If $\phi_n=0$, then $\psi_a=0$ and $G_a=1$, independently of population growth. In all cases with $\phi_a\neq0$, both $G_a$ and $G_y$ converge to one as $G_n\to1$ along feasible BGPs.

Point \ref{prop12} of Proposition \ref{prop1} indicates that the effects of saving rate $s$ and the research efficiency $\mu_a$ on the steady state levels also depend on $\phi_a$. When $\phi_a>0$, a higher saving rate or greater research efficiency weakly raises the normalized steady-state levels $A_*$, $K_*$, and $Y_*$. The effects are strict whenever the corresponding technological channel is active; in particular, research efficiency strictly raises $A_*$, while saving strictly raises $A_*$ when $\gamma_h>0$. Greater research efficiency generates more knowledge from given research inputs, while higher saving supplies additional resources for physical investment and R\&D. Through the interaction between knowledge, production, and accumulation, these changes shift the BGP upward without altering its growth factors.

When $\phi_a<0$, these level effects can be reversed. Greater research efficiency strictly lowers $A_*$ and weakly lowers $K_*$ and $Y_*$, while a higher saving rate weakly lowers $A_*$. The saving effects on capital and output are determined, respectively, by $(1-\gamma_a)/\phi_a$ and $[\alpha_k(1-\gamma_a)+\alpha_a\gamma_h]/\phi_a$, and therefore need not have the same sign. These results describe changes in the stock levels required to sustain the balanced-growth factors; they do not imply that an increase in saving or research efficiency causes an economy starting from given stocks to converge to a lower path. Such a conclusion requires a separate analysis of stability and transitional dynamics, undertaken in Section~\ref{section4}. Note that, by  Proposition \ref{unstable}, condition $\phi_a<0$ implies instability, whenever the positive steady state exists.

Notice also that the technological elasticities govern the strength of demographic transmission. Within the region $\phi_a>0$, the expressions for $\psi_a$ and $\psi_k$ imply that both exponents are weakly increasing in each of $\alpha_k,\alpha_a,\alpha_\ell,\gamma_a,\gamma_h$, and $\gamma_\ell$, holding the other parameters fixed. Consequently, higher elasticities weakly raise the BGP growth factors when $G_n>1$ and weakly lower them when $0<G_n<1$. This monotonicity need not extend to the region $\phi_a<0$. Importantly, technological elasticities should be distinguished from allocation shares: the former determine the growth exponents, whereas the latter affect BGP levels without changing these exponents.

Overall, the model combines the semi-endogenous insight that population growth $G_n$ is fundamental driver of long-run growth with an R\&D-based mechanism in which allocation and technological elasticities determine how strongly demographic growth is transmitted into productivity growth.  Within the conventional region \(\phi_a>0\) and \(\phi_n>0\), \(\psi_a>0\); hence, when \(G_n>1\), faster population growth raises long-run productivity growth. When \(\phi_a<0\), the sign can reverse, as discussed above

In this sense, $\psi_a$ and $\psi_k$ are amplification terms: they summarize the interaction between population growth, capital accumulation, and knowledge production, and provide a bridge between the semi-endogenous framework of \citet{Jones1995}, \citet{kremer1993} and R\&D-based growth models such as \citet{Aghion1992, Jones2001} and with recent evidence on diminishing returns to ideas \citep{Bloom2020}.

The following result investigates the relationship between population growth and economic growth.
\begin{corollary}[Demography, aggregate growth, and per-capita output]
\label{BGPcompare1}
Suppose that the conditions of Theorem~\ref{theorem1}'s part \ref{theorem1_1}
hold and that $\phi_n>0$. 

\begin{enumerate}
    \item Let $G_n > 1$. Then,  $G_a > 1$ iff\footnote{Iff means that "if and only if".} $(1 - \gamma_a)(1 - \alpha_k) > \alpha_a \gamma_h$ while $G_y>1$ iff $\psi_k>0$. Moreover, $G_a>G_n$ iff $\psi_a>1$ and $G_y>G_n$ iff $\psi_k>1$;
    \item Let $G_n < 1$.  Then, $G_a > 1$ iff $(1 - \gamma_a)(1 - \alpha_k) < \alpha_a \gamma_h$ while $G_y>1$ iff $\psi_k<0$. Moreover, $G_a>G_n$ iff $\psi_a<1$ and $G_y>G_n$ iff $\psi_k<1$.
    \end{enumerate}
\end{corollary}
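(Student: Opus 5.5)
The plan is to read everything off the closed-form growth factors of Theorem~\ref{theorem1}, part~\ref{theorem1_1}. On the BGP we have $G_a=G_n^{\psi_a}$ and $G_y=G_k=G_n^{\psi_k}$, where $\psi_a=\phi_n/\phi_a$ and $\psi_k=(\alpha_\ell+\alpha_a\psi_a)/(1-\alpha_k)$. Taking logarithms gives
\begin{align*}
\ln G_a=\psi_a\ln G_n,\qquad \ln G_y=\psi_k\ln G_n .
\end{align*}
Every claim in the corollary is then a sign statement about a product of two real numbers. The same identities give
\begin{align*}
\ln(G_a/G_n)=(\psi_a-1)\ln G_n,\qquad \ln(G_y/G_n)=(\psi_k-1)\ln G_n .
\end{align*}

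\emph{Case $G_n>1$.} Here $\ln G_n>0$, so each ratio exceeds one exactly when its coefficient is positive.
\begin{itemize}[label={}]
\item $G_y>1$ iff $\psi_k>0$, $G_a>G_n$ iff $\psi_a>1$, and $G_y>G_n$ iff $\psi_k>1$. These follow directly from the identities above.
\item $G_a>1$ iff $\psi_a>0$. Since $\phi_n>0$ by hypothesis, $\psi_a=\phi_n/\phi_a$ has the sign of $\phi_a$; note that $\phi_a\neq0$ is guaranteed by part~\ref{theorem1_1} of Theorem~\ref{theorem1}. So $\psi_a>0$ iff $\phi_a>0$, i.e. iff $(1-\gamma_a)(1-\alpha_k)>\alpha_a\gamma_h$.
\end{itemize}

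\emph{Case $G_n<1$.} Here $\ln G_n<0$, so every inequality reverses.
\begin{itemize}[label={}]
\item $G_a>1$ iff $\psi_a<0$, which holds iff $\phi_a<0$, i.e. iff $(1-\gamma_a)(1-\alpha_k)<\alpha_a\gamma_h$.
\item $G_y>1$ iff $\psi_k<0$.
\item $G_a>G_n$ iff $\psi_a<1$, and $G_y>G_n$ iff $\psi_k<1$.
\end{itemize}

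No genuine obstacle is expected. The only points requiring care are the following.
\begin{itemize}[label={}]
\item The division by $\phi_a$ must be legitimate and the sign of $\phi_n$ must be known. Both are supplied by the hypotheses: $\phi_a\neq0$ from part~\ref{theorem1_1} and $\phi_n>0$ by assumption.
\item The exponential map is strictly increasing, so strict inequalities on logarithms transfer to the growth factors themselves.
\item Uniqueness of the BGP in Theorem~\ref{theorem1} ensures that these are the growth factors of the BGP, not merely of some candidate path.
\end{itemize}

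The title's per-capita reading follows as a remark rather than a separate claim. Per-capita output grows by the factor $G_y/G_n$, so output per capita rises iff $(\psi_k-1)\ln G_n>0$, which is exactly the last equivalence in each case.
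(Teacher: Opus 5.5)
Your proposal is correct and follows the paper's proof. Both write $\ln G_a=\psi_a\ln G_n$, $\ln G_y=\psi_k\ln G_n$ (and the analogous identities for the ratios to $G_n$), use $\phi_n>0$ to tie the sign of $\psi_a$ to the sign of $\phi_a$, and read off every equivalence from the sign of $\ln G_n$.
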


\begin{proof}
By Theorem~\ref{theorem1}, $\ln G_a=\psi_a\ln G_n, \ln G_y=\psi_k\ln G_n,
 \psi_a={\phi_n}/{\phi_a},$ with $\phi_n>0$. Hence $\psi_a>0$ iff $\phi_a>0$, i.e.,
$(1-\gamma_a)(1-\alpha_k)>\alpha_a\gamma_h$. Moreover, $\ln({G_a}/{G_n})=(\psi_a-1)\ln G_n, 
\ln({G_y}/{G_n})=(\psi_k-1)\ln G_n.$ 
The claims follow immediately, noting that $\ln G_n>0$ if $G_n>1$ and
$\ln G_n<0$ if $0<G_n<1$.
\end{proof}
Corollary \ref{BGPcompare1}'s point 1 complements the classical view of Malthus (1798) (see \citet{Malthus1989}), in which population growth absorbs productivity gains and keeps living standards at subsistence. 
Our framework thus extends the Malthusian mechanism by showing that knowledge creation and factor allocation can amplify growth beyond the limits imposed by demographic expansion.

We now look at point 2 of Corollary \ref{BGPcompare1}, where $G_n<1$, meaning that the population decreases. In this case, $G_a>G_n$ iff $\psi_a<1$ while $G_y>G_n$ iff $\psi_k<1$. In this case, productivity and output 
can still outpace the declining population, provided that the 
R\&D channel is sufficiently strong. This corresponds to a situation in which technological progress compensates for demographic decline,
a possibility emphasized in recent discussions of low-population
growth economies \citep{Bloom2020, Jones2001, JonesAER2022}.

\section{Dynamic Analysis}\label{section4}
We now investigate the dynamic properties of the economic system and of the BGP. Unless explicitly stated otherwise, Assumptions \ref{functionF}--\ref{basicassumptions} are maintained throughout.
\subsection{Stability of the economic system}
\label{section-stability}
Let us start by introducing the notions of stability and instability.
\begin{definition}    Consider a function $f: \rr^n\to \rr^n$. 
\begin{enumerate}
    \item A fixed point $x^*$ is said to be stable if, for any $\epsilon>0$, there exists $\delta>0$ such that for any $x$ satisfying $\|x-x^*\|<\delta$, we have $\|f^t(x)-x^*\|<\epsilon$ for any $t=1,2,\ldots$, where the sequence $(f^t(x))_t$ is defined by $f^0(x)=x$ and $f^{t+1}(x)\equiv f(f^t(x))$.\footnote{Here, we consider the Euclidean  distance, i.e., $\|(a_1,\ldots,a_n)-(b_1,\ldots,b_n)\|\equiv \sqrt{(a_1-b_1)^2+\cdots+ (a_n-b_n)^2} $.}
    
\item  $x^*$ is said to be {\it attracting} if there exists $\eta>0$ such that: $\| x-x^*\|<\eta $ implies $\lim_{t\to \infty}f^t(x)=x^*$. It is called {\it global attracting} if $\eta=\infty$.

\item $x^*$ is said to be an {\it asymptotically stable point} if it is stable and attracting.  If $\eta=\infty$, then $x^*$ is said to be {\it globally asymptotically stable}.
\end{enumerate}
\end{definition}

We now come back to our economic system.  Denote the  detrended variables 
\begin{equation}\label{hat_variables}
    \hat{K}_t = \frac{K_t}{G_k^t}, \quad \hat{A}_t = \frac{A_t}{G_a^t}, \quad \hat{Y}_t = \frac{Y_t}{G_y^t},
\end{equation}
where $G_a, G_k, G_y$ are the growth rates of the BGP.

We aim to investigate the conditions under which $(\hat{K}_t, \hat{A}_t, \hat{Y}_t)$ converges to $(K_*, A_*, Y_*)$. Since $\hat{Y}_t$ can be easily computed as a function of $(\hat{K}_t, \hat{A}_t)$, it suffices to focus on $\hat{K}_t, \hat{A}_t$. 
According to the system (\ref{system}), combined with the change of variables (\ref{hat_variables}) and the balanced-growth conditions \eqref{Gky}-\eqref{gan} of Lemma \ref{lemma1}, we have a two-dimensional dynamical system of $(\hat{K}_t, \hat{A}_t)$.
\begin{lemma}[Detrended dynamics]\label{lemma-2dim-system}
The normalized stocks satisfy the following dynamical system.
    \begin{subequations}
\label{system-tilde}
 \begin{align}\label{system-tildeK}
  G_k\hat{K}_{t+1}&= (1-\delta_k)\hat{K}_t+X_k\hat{A}_t^{\alpha_a}\hat{K}_t^{\alpha_k}\\
 \label{system-tildeA} G_a\hat{A}_{t+1}&=(1-\delta_a)\hat{A}_t+X_a\hat{A}_t^{\gamma_a+\alpha_a\gamma_h}\hat{K}_t^{\alpha_k\gamma_h}.
\end{align}
\end{subequations}
\end{lemma}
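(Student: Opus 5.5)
The plan is a direct substitution: insert the detrended variables \eqref{hat_variables} into the capital and knowledge equations of system \eqref{system}, then cancel all time-dependent factors using the balanced-growth conditions \eqref{Gank} and \eqref{gan} of Lemma~\ref{lemma1}. Throughout, $G_a,G_k$ are the BGP growth factors from Theorem~\ref{theorem1}, so both relations are available.

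\textbf{Capital.} First I combine \eqref{2b} with $L_{y,t}=\lambda_y L_0G_n^t$ and $S_t=sY_t$, which gives
\[
(1-\gamma)S_t=(1-\gamma)s\lambda_y^{\alpha_\ell}L_0^{\alpha_\ell}A_t^{\alpha_a}K_t^{\alpha_k}G_n^{\alpha_\ell t}=X_kA_t^{\alpha_a}K_t^{\alpha_k}G_n^{\alpha_\ell t}.
\]
Next I write $K_{t+1}=G_k^{t+1}\hat K_{t+1}$, $K_t=G_k^t\hat K_t$, $A_t=G_a^t\hat A_t$ and divide the capital equation by $G_k^t$. The investment term then carries the factor
\[
G_a^{\alpha_a t}G_k^{\alpha_k t}G_n^{\alpha_\ell t}G_k^{-t}=\bigl(G_a^{\alpha_a}G_n^{\alpha_\ell}G_k^{-(1-\alpha_k)}\bigr)^t .
\]
By \eqref{Gank} this factor equals one, and \eqref{system-tildeK} follows.

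\textbf{Knowledge.} I start from the reduced form \eqref{AGamma} of Remark~\ref{indirect_effect}. Its coefficient times $L_0^{\alpha_\ell\gamma_h+\gamma_\ell}$ is exactly $X_a$, as defined in \eqref{xkxa}. I then substitute the detrended variables and divide by $G_a^t$. The residual time factor is
\[
\Bigl(G_a^{\gamma_a+\alpha_a\gamma_h-1}\,G_k^{\alpha_k\gamma_h}\,G_n^{\alpha_\ell\gamma_h+\gamma_\ell}\Bigr)^t ,
\]
and the key step is to show this equals one.

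\textbf{Verifying the knowledge factor.} I raise the bracket to the power $1-\alpha_k$ and use \eqref{Gank} to replace $G_k^{(1-\alpha_k)\alpha_k\gamma_h}$ by $G_a^{\alpha_a\alpha_k\gamma_h}G_n^{\alpha_\ell\alpha_k\gamma_h}$.

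The exponent of $G_a$ becomes
\[
(\gamma_a+\alpha_a\gamma_h-1)(1-\alpha_k)+\alpha_a\alpha_k\gamma_h=-(1-\gamma_a)(1-\alpha_k)+\alpha_a\gamma_h=-\phi_a .
\]
The exponent of $G_n$ becomes
\[
(\alpha_\ell\gamma_h+\gamma_\ell)(1-\alpha_k)+\alpha_\ell\alpha_k\gamma_h=\alpha_\ell\gamma_h+(1-\alpha_k)\gamma_\ell=\phi_n .
\]
So the bracket to the power $1-\alpha_k$ is $G_a^{-\phi_a}G_n^{\phi_n}$, which equals one by \eqref{gan}. Since $1-\alpha_k>0$, the bracket itself equals one, and \eqref{system-tildeA} follows.

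\textbf{Bookkeeping.} The only point needing care is the timing convention $L_{a,t+1}=\lambda_aL_0G_n^{t+1}$. Its extra factor $G_n^{\gamma_\ell}$ is the one absorbed into $X_a$ through $(\lambda_aG_n)^{\gamma_\ell}$, and this is already handled in \eqref{AGamma}. There is no genuine obstacle: the exponent cancellations are precisely the content of Lemma~\ref{lemma1}.
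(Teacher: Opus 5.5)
Your proposal is correct and follows the paper's own proof. Like the paper, you substitute the detrended variables into the capital and knowledge equations and cancel the time factors using \eqref{Gank} and \eqref{gan}. Your explicit check that the residual knowledge factor, raised to the power $1-\alpha_k$, equals $G_a^{-\phi_a}G_n^{\phi_n}=1$ simply reverses a computation from the proof of Lemma~\ref{lemma1}; the paper leaves this step implicit.
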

\begin{proof} See Appendix \ref{section-stability-proofs}.\end{proof}

\subsubsection{Global stability of the BGP}

The system (\ref{system-tilde}) is non-linear and two-dimensional. In general, it is not easy to obtain the global stability of a non-linear multi-dimensional system. However, in our cases, we manage to prove the convergence of $(\hat{K}_t, \hat{A}_t)$. The following result is a key.

\begin{lemma}[global convergence of dynamical systems with concave functions]
\label{convergence_general}
Consider a system $x_{t+1}=f(x_t)$ where $f=(f_1,\ldots,f_n)$, i.e.,
\begin{align}
    x_{j,t+1}&=f_j(x_{1,t},\ldots,x_{n,t}) \text{ } \forall j\in \{1,2,\ldots, n\}.
\end{align}
Assume that the function $f_i:\rr^n_+\to \rr$ is continuous and satisfies $f_i(cx)<cf_i(x)$ $\forall c>1,x\in \rr^n_{++}$. 
Moreover, assume that the order-preserving property holds: $f_i(x)\leq f_i(y)$ for any $i$, for any $x,y\in \rr_+^n$ with $x_j\leq y_j$ $\forall j=1,\ldots,n$. Assume also that there exists a unique positive fixed point $x^*$ (i.e.,  $f_i(x^*)=x^*_i>0$  $\forall i$).

Then for any $x_0\in \rr_{++}^n$, the sequence $(x_t)$ determined by $x_{t+1}=f(x_t)$ converges to $x^*$.
\end{lemma}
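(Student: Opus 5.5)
The plan is a sandwich argument that combines monotonicity with the strict subhomogeneity $f_i(cx)<cf_i(x)$ for $c>1$. First I will turn subhomogeneity into a statement for $c<1$. For $0<c<1$ and $x\in\rr^n_{++}$, put $y=cx$ and $c'=1/c>1$. Then $f_i(x)=f_i(c'y)<c'f_i(y)$, so $f_i(cx)>cf_i(x)$.

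Given $x_0\in\rr^n_{++}$, choose $c\ge 1$ large enough that $x_0/c\le x^*\le cx_0$ componentwise. If $c=1$, then $x_0=x^*$ and there is nothing to prove. Otherwise set $u_0=cx^*$ and $\ell_0=x^*/c$, so $\ell_0\le x_0\le u_0$, and iterate $u_{t+1}=f(u_t)$, $\ell_{t+1}=f(\ell_t)$. By the order-preserving property, $\ell_t\le x_t\le u_t$ for all $t$.

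Next I will show the upper sequence is monotone. We have $u_1=f(cx^*)\le cf(x^*)=cx^*=u_0$, with strict inequality in every component. Monotonicity of $f$ and induction then give $u_{t+1}\le u_t$. The sequence is also bounded below by $x^*$, because $u_0\ge x^*$ implies $u_t\ge f^t(x^*)=x^*$. Symmetrically, $\ell_1=f(x^*/c)\ge x^*/c=\ell_0$, so $(\ell_t)$ is nondecreasing and bounded above by $x^*$. Hence $u_t\to\bar u\ge x^*$ and $\ell_t\to\bar\ell\le x^*$, with $\bar\ell\ge\ell_0\in\rr^n_{++}$. By continuity of $f$, both limits are fixed points. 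Uniqueness of the positive fixed point then gives $\bar u=\bar\ell=x^*$, since both limits are strictly positive. The squeeze finishes the argument: $x_t\to x^*$.

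The step to watch is the uniqueness clause. "Unique positive fixed point" must exclude other fixed points in $\rr^n_{++}$, and both limits do lie there: $\bar u\ge x^*\gg0$ and $\bar\ell\ge x^*/c\gg0$. The subhomogeneity hypothesis is only assumed on $\rr^n_{++}$, but it is applied only at $x^*$, so this causes no difficulty. Strictness of the inequality is not even needed for the monotone bounds; uniqueness does the remaining work. The one detail to check is that the $u_t$ and $\ell_t$ stay in $\rr^n_{+}$, where $f$ is defined. This holds because they are sandwiched between $x^*/c$ and $cx^*$.
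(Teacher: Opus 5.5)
Your proof is correct, but it closes the argument differently from the paper. The paper works along the actual trajectory with the scalar bounds $\lambda_t=\max\{x_{1,t}/x_1^*,\ldots,x_{n,t}/x_n^*,1\}$ and $\theta_t=\min\{x_{1,t}/x_1^*,\ldots,x_{n,t}/x_n^*,1\}$. It shows that $\theta_t x^*\le x_t\le \lambda_t x^*$, that $\lambda_t$ is nonincreasing and that $\theta_t$ is nondecreasing. It then forces $\lim\lambda_t=1$ by passing to the limit in $\lambda_{t+1}\le\max\{f_1(\lambda_t x^*)/x_1^*,\ldots,f_n(\lambda_t x^*)/x_n^*,1\}$ and invoking the \emph{strict} inequality $f_i(\lambda x^*)<\lambda x_i^*$; the same reasoning gives $\lim\theta_t=1$. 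The uniqueness hypothesis is never used there, and in fact it follows from the conclusion. You instead iterate the two extremal points $cx^*$ and $x^*/c$. This gives monotone, bounded vector orbits whose limits are fixed points by continuity, and uniqueness identifies both limits with $x^*$. Your route is the classical monotone-iteration (super-/sub-solution) argument. As you note, it only needs the weak inequality $f_i(cx)\le cf_i(x)$ together with uniqueness. The paper's route needs strictness, but it shows that under strict subhomogeneity uniqueness is automatic. Your explicit conversion of subhomogeneity to factors $c<1$ is exactly what the paper's ``by the same argument'' for $\theta_t$ leaves implicit. If you wanted to drop the uniqueness hypothesis from your version, apply the paper's trick to the limit itself. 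Let $\mu=\max_i \bar u_i/x_i^*\ge 1$. If $\mu>1$, then $\bar u=f(\bar u)\le f(\mu x^*)<\mu x^*$ componentwise, contradicting the definition of $\mu$. The same works for $\bar\ell$ with $\nu=\min_i\bar\ell_i/x_i^*\in(0,1]$.
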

\begin{proof} See Appendix \ref{section-stability-proofs}.\end{proof}

The proof of Lemma \ref{convergence_general} is not trivial. We make use of the approach of monotone dynamical systems \citep{Hirsch2006}.
 
Lemma \ref{convergence_general} allows us to obtain the global convergence of our economic system.\footnote{The growth factors of Theorem \ref{theorem1} depend only on population growth and on the elasticities of the two technologies, so the size of the economy affects levels but not growth rates; this is the non-scale property emphasized by \citet{EicherTurnovsky1999}. Convergence in that literature is obtained by linearizing around the balanced growth path, hence only for economies that start close to it, whereas the result below holds from any positive initial stocks.}
\begin{theorem}[Global Stability of the BGP]\label{global_stability}
Assume that $G_a>1-\delta_a,G_k>1-\delta_k, \alpha_a+\alpha_k\in (0, 1),  \gamma_a+\alpha_a\gamma_h\geq 0, \gamma_a+\alpha_a\gamma_h+\alpha_k\gamma_h \in (0,1).$

If $K_0>0,A_0>0$, then $\hat{K}_t = \frac{K_t}{G_k^t},  \hat{A}_t = \frac{A_t}{G_a^t}$ converge to $K_*,A_*$ (determined in Theorem \ref{theorem1}). Consequently, the growth rates ${K_{t+1}}/{K_t}$ and ${A_{t+1}}/{A_t}$ converge to $G_k$ and $G_a$ respectively.
\end{theorem}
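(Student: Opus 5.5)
The plan is to apply Lemma~\ref{convergence_general} to the detrended system \eqref{system-tilde} of Lemma~\ref{lemma-2dim-system}, with $n=2$ and $x=(\hat K,\hat A)$. Define
\begin{align*}
f_1(K,A)&=\frac{(1-\delta_k)K+X_kA^{\alpha_a}K^{\alpha_k}}{G_k},\\
f_2(K,A)&=\frac{(1-\delta_a)A+X_aA^{\Gamma}K^{\alpha_k\gamma_h}}{G_a},
\end{align*}
where $\Gamma=\gamma_a+\alpha_a\gamma_h$. All exponents are nonnegative: $\alpha_a,\alpha_k\ge 0$, $\alpha_k\gamma_h\ge0$, and $\Gamma\ge 0$ by assumption. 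Hence both maps are continuous on $\rr^2_+$ (with the convention $0^0=1$) and nondecreasing in each argument. This gives the order-preserving property, since $1-\delta_k,1-\delta_a\ge 0$, $X_k,X_a>0$ and $G_k,G_a>0$.

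Next I check the strict sub-homogeneity $f_i(cx)<cf_i(x)$ for $c>1$ and $x\in\rr^2_{++}$. For $f_1$, the linear part scales exactly by $c$. The power part scales by $c^{\alpha_a+\alpha_k}$, which is strictly less than $c$ because $\alpha_a+\alpha_k<1$ and the term is strictly positive. For $f_2$, the power part scales by $c^{\Gamma+\alpha_k\gamma_h}<c$, since $\Gamma+\alpha_k\gamma_h\in(0,1)$. So the strict inequality holds for both components.

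It remains to verify that there is a unique positive fixed point, and that it is $(K_*,A_*)$. At a fixed point, \eqref{system-tildeK} gives
\[
(G_k-1+\delta_k)K=X_kA^{\alpha_a}K^{\alpha_k},
\]
and since $G_k>1-\delta_k$ and $\alpha_k<1$, this yields $K=\bigl(X_kA^{\alpha_a}/(G_k-1+\delta_k)\bigr)^{1/(1-\alpha_k)}$, which is \eqref{formulaK_ss}. Substituting into \eqref{system-tildeA} and using $G_a>1-\delta_a$ gives
\[
(G_a-1+\delta_a)A^{1-\Gamma}=X_aK^{\alpha_k\gamma_h}.
\]
This reduces to a single power equation $A^{\phi_a/(1-\alpha_k)}=$ const $>0$, which is \eqref{formulaA_ss}.

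The main point to check is that $\phi_a\neq 0$, so that this equation has exactly one positive solution and no continuum (compare part 2 of Theorem~\ref{theorem1}). The exponent is
\[
1-\Gamma-\frac{\alpha_a\alpha_k\gamma_h}{1-\alpha_k}=\frac{(1-\alpha_k)(1-\Gamma-\alpha_k\gamma_h)+\alpha_k\gamma_h(1-\alpha_k-\alpha_a)}{1-\alpha_k}.
\]
Both terms in the numerator are nonnegative, and the first is strictly positive because $\Gamma+\alpha_k\gamma_h<1$ and $\alpha_k<1$. Hence $\phi_a>0$, the fixed point is unique and positive, and Theorem~\ref{theorem1} part~1 identifies it as $(K_*,A_*)$. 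The hypotheses $G_a>1-\delta_a$ and $G_k>1-\delta_k$ are the feasibility conditions \eqref{Ga*}, so the BGP exists.

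Lemma~\ref{convergence_general} then gives $(\hat K_t,\hat A_t)\to(K_*,A_*)$ from any positive initial pair. Since $\hat K_0=K_0$ and $\hat A_0=A_0$, this is any $K_0,A_0>0$. For the growth rates, write
\[
\frac{K_{t+1}}{K_t}=G_k\,\frac{\hat K_{t+1}}{\hat K_t}\to G_k,
\]
because the limit $K_*$ is positive; the same argument gives $A_{t+1}/A_t\to G_a$.
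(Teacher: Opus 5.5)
Your proposal is correct and follows the paper's proof: write the detrended system as $(\hat K_{t+1},\hat A_{t+1})=(f_1,f_2)(\hat K_t,\hat A_t)$ and apply Lemma~\ref{convergence_general}. You are more explicit than the paper in two places. You check the sub-homogeneity $f_i(cx)<cf_i(x)$ directly from the exponent conditions, rather than appealing to joint strict concavity, which can fail, e.g.\ when $\alpha_k\gamma_h=0$. You also derive $\phi_a>0$ from the hypotheses, which guarantees the unique positive fixed point that the paper only asserts.
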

\begin{proof}Denote 
\begin{subequations}
\begin{align}\label{abi_definition}
a_i&\equiv \frac{1-\delta_i}{G_i}, \quad  b_i\equiv \frac{X_i}{G_i} \text{ for  } i=k,a\\
f_1(K,A)&\equiv a_kK+b_kA^{\alpha_a}K^{\alpha_k}\\
f_2(K,A)&\equiv a_aA+b_aA^{\gamma_a+\alpha_a\gamma_h}K^{\alpha_k\gamma_h}.
\end{align}    
\end{subequations}
The system  (\ref{system-tilde}) becomes $
\hat{K}_{t+1}=f_1(\hat{K}_t,\hat{A}_t), 
 \hat{A}_{t+1}=f_2(\hat{K}_t,\hat{A}_t).$
 
 By our assumption $\alpha_a+\alpha_k\in (0, 1),  \gamma_a+\alpha_a\gamma_h\geq 0, \gamma_a+\alpha_a\gamma_h+\alpha_k\gamma_h \in (0,1)$, the functions $f_1,f_2$ are strictly concave and satisfy the order-preserving property. Moreover, they satisfy all conditions of Lemma \ref{convergence_general}. So, we can apply Lemma \ref{convergence_general} to obtain the convergence of $(\hat{K}_t)$ and $(\hat{A}_t)$.\footnote{Since  Lemma \ref{convergence_general} applies for multidimensional cases, it would be possible to generalize Theorem \ref{global_stability} to a model with multiple factors.}
\end{proof}


\paragraph{Monotonicity of $\hat{K_t}$ and $\hat{A_t}$.}  
In Theorem \ref{global_stability}, we have proved that $\hat{K_t}$ and $\hat{A_t}$ converge. However, with additional conditions on initial conditions, we can go further by demonstrating that they are increasing or decreasing over time, as shown by the following result. 
\begin{proposition}[\emph{Monotone normalized trajectories}]
    \label{monotone-remark}  Assume that $a_i\in (0,1), \gamma_a+\alpha_a\gamma_h\in (0,1)$.
\begin{enumerate}
    \item 
If the initial capital and knowledge stocks satisfy
 \begin{align}\label{system-tilde-date0}
b_k\hat{A}_0^{\alpha_a}\hat{K}_0^{\alpha_k-1}\geq 1-a_k \text{ and }
b_a\hat{A}_0^{\gamma_a+\alpha_a\gamma_h-1}\hat{K}_0^{\alpha_k\gamma_h}\geq 1-a_a.
\end{align}
then the sequences $\hat{K_t}$ and $\hat{A_t}$ are increasing in time.

\item If the initial capital and knowledge stocks satisfy
 \begin{align}\label{system-tilde-date1}
b_k\hat{A}_0^{\alpha_a}\hat{K}_0^{\alpha_k-1}< 1-a_k \text{ and }
b_a\hat{A}_0^{\gamma_a+\alpha_a\gamma_h-1}\hat{K}_0^{\alpha_k\gamma_h}< 1-a_a.
\end{align}
then the sequences $\hat{K_t}$ and $\hat{A_t}$ are decreasing in time.
\end{enumerate}
\end{proposition}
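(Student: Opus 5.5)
The plan is to read conditions \eqref{system-tilde-date0} and \eqref{system-tilde-date1} as saying that the first step of the detrended system is componentwise upward (resp.\ downward). We then propagate that direction forward by the order-preserving property of the map $f=(f_1,f_2)$ introduced in the proof of Theorem~\ref{global_stability}. By Lemma~\ref{lemma-2dim-system}, $\hat{K}_{t+1}=f_1(\hat{K}_t,\hat{A}_t)$ and $\hat{A}_{t+1}=f_2(\hat{K}_t,\hat{A}_t)$, with $a_i,b_i$ as in \eqref{abi_definition}.

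\textbf{Step 1: rewrite the first step.} Factor the two components as
\begin{align*}
f_1(K,A)&=K\big(a_k+b_kA^{\alpha_a}K^{\alpha_k-1}\big),\\
f_2(K,A)&=A\big(a_a+b_aA^{\gamma_a+\alpha_a\gamma_h-1}K^{\alpha_k\gamma_h}\big).
\end{align*}
Since $\hat{K}_0,\hat{A}_0>0$, the first condition in \eqref{system-tilde-date0} is exactly $\hat{K}_1\ge\hat{K}_0$, and the second is exactly $\hat{A}_1\ge\hat{A}_0$. Likewise, \eqref{system-tilde-date1} is exactly $\hat{K}_1<\hat{K}_0$ and $\hat{A}_1<\hat{A}_0$. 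Positivity of all iterates is immediate, because $a_i>0$, $b_i>0$ and the stocks start positive.

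\textbf{Step 2: monotonicity of $f$ on $\rr^2_{++}$.} The relevant exponents are $\alpha_a\ge0$, $\alpha_k\ge0$, $\alpha_k\gamma_h\ge0$ and $\gamma_a+\alpha_a\gamma_h\in(0,1)$, and the coefficients satisfy $a_i>0$, $b_i>0$. Hence both $f_1$ and $f_2$ are nondecreasing in each argument. In fact $f_1$ is strictly increasing in $K$, because $a_k>0$, and $f_2$ is strictly increasing in $A$, because $a_a>0$ and $\gamma_a+\alpha_a\gamma_h>0$.

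\textbf{Step 3: induction.} Suppose $(\hat{K}_{t},\hat{A}_t)\ge(\hat{K}_{t-1},\hat{A}_{t-1})$ componentwise. Applying the order-preserving property gives $(\hat{K}_{t+1},\hat{A}_{t+1})=f(\hat{K}_t,\hat{A}_t)\ge f(\hat{K}_{t-1},\hat{A}_{t-1})=(\hat{K}_{t},\hat{A}_t)$. Together with Step~1 this proves part~1. For part~2 the same induction runs with reversed inequalities. Strictness propagates because each component is strictly increasing in its own variable: a strict decrease of $\hat{K}_t$ together with a weak decrease of $\hat{A}_t$ gives a strict decrease of $\hat{K}_{t+1}$, and symmetrically for $\hat{A}$. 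So both sequences are strictly decreasing.

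I expect no real obstacle. The only points needing care are checking that the sign restrictions guarantee monotonicity of $f$ even when $\gamma_a<0$, which holds because only $\gamma_a+\alpha_a\gamma_h$ enters, and noting that part~1 yields nondecreasing sequences, strictly increasing if the inequalities in \eqref{system-tilde-date0} are strict.
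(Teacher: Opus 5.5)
Your proof is correct, but it handles the inductive step differently from the paper. The paper's Lemma~\ref{induction_t} works directly with the one-step growth ratios. It sets $x=b_k\hat{A}_t^{\alpha_a}\hat{K}_t^{\alpha_k-1}$ and $y=b_a\hat{A}_t^{\gamma_a+\alpha_a\gamma_h-1}\hat{K}_t^{\alpha_k\gamma_h}$, computes $b_k\hat{A}_{t+1}^{\alpha_a}\hat{K}_{t+1}^{\alpha_k-1}=x(a_a+y)^{\alpha_a}(a_k+x)^{\alpha_k-1}$, and then proves $x\ge(1-a_k)(a_k+x)^{1-\alpha_k}$ for $x\ge 1-a_k$ by a derivative argument. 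It treats the knowledge ratio the same way, invoking $\gamma_a+\alpha_a\gamma_h<1$, and leaves part~2 as ``analogous.''

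You bypass this computation. You note that the hypotheses say exactly $f(x_0)\ge x_0$ (resp.\ $f(x_0)<x_0$). The order-preserving property of $f$, which the paper itself uses in Lemma~\ref{convergence_general}, then gives $x_{t+1}=f(x_t)\ge f(x_{t-1})=x_t$ at once.

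Your route has several advantages:
\begin{itemize}
\item It is shorter and treats both parts symmetrically. The paper's derivative bound does not transfer verbatim below the threshold $x<1-a_k$, so its ``analogous'' hides a small extra step.
\item It handles strictness in part~2 explicitly, via $a_k,a_a>0$.
\item It shows which hypotheses matter: nonnegativity of the exponents (in particular $\gamma_a+\alpha_a\gamma_h\ge 0$) and positivity of $a_i,b_i$. The upper bounds $\gamma_a+\alpha_a\gamma_h<1$ and $a_i<1$ play no role.
\end{itemize}
The paper's computation is more hands-on but yields nothing your argument misses.
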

\begin{proof} See Appendix \ref{section-stability-proofs}.\end{proof}
\subsubsection{Global stability under general production functions}
Theorem \ref{global_stability} shows  global stability under Cobb--Douglas functions. However, thanks to Lemma \ref{convergence_general}, we can actually prove the global stability of the system under general functions $F$ and $F_a$, which satisfy mild conditions. The output equals $Y_t=F(A_t,K_t,\lambda_yL_t)$ and the economic system is now described  by
    \begin{subequations}\label{SystemGeneral}
\begin{align}
S_t&=sY_t, \quad L_{t}=L_0G_n^t, \quad L_{yt}=\lambda_yL_t, \quad  L_{at}=\lambda_aL_t, \quad \lambda_a+\lambda_y=1\\
H_{t+1}&=\gamma s F(A_t,K_t,\lambda_yL_t)\\
\label{Kgeneral} K_{t+1}&= (1-\delta_k)K_t+(1-\gamma)s F(A_t,K_t,\lambda_yL_t) \\
\label{Ageneral}A_{t+1}&=(1-\delta_a)A_t+ F_a(A_t, H_{t+1},\lambda_aL_{t+1})
\end{align}
\end{subequations}
where  $K_0, A_0,L_0$  are given and are positive.
%
Define the functions $\hat{F}$ and $\hat{F}_a$ by $\hat{F}(\hat{A},\hat{K})\equiv F(\hat{A},\hat{K},\lambda_y)$ and 
$\hat{F}_a(\hat{A},\hat{K})=F_a(\hat{A},\gamma s\hat{F}(\hat{A},\hat{K}),\lambda_aG_n)$.

\begin{assum}\label{FF_a}

\begin{enumerate}
  \item \label{FF_a1} The functions $\hat{F}$ and $\hat{F}_a$ are strictly increasing in each component, continuously differentiable, strictly concave and $\hat{F}(0,0)=\hat{F}_a(0,0)=0$.

\item\label{FF_a2}  There exist $\psi_a,\psi_k>0$ such that, $\forall \lambda,A,K,H,L\geq 0,$
\begin{subequations}
\begin{align}
F(\lambda^{\psi_a}A,\lambda^{\psi_k}K,\lambda L)&=\lambda^{\psi_k}F(A,K,L),\\
F_a(\lambda^{\psi_a}A,\lambda^{\psi_k}H,\lambda L)&=\lambda^{\psi_a}F_a(A,H,L).
\end{align}
\end{subequations}
\item \label{FF_a3} There exists a unique vector $(\hat{K}_*,\hat{A}_*)\in \rr_{++}^2$ such that
\begin{align}\label{vectorAK}
G_k\hat{K}_*&=(1-\delta_k)\hat{K}_*+(1-\gamma)s\hat{F}(\hat{A}_*,\hat{K}_*), &
G_a\hat{A}_*&=(1-\delta_a)\hat{A}_*+\hat{F}_a(\hat{A}_*,\hat{K}_*).
\end{align}
\end{enumerate}
\end{assum}
We have the following result regarding the existence and stability of BGP.
\begin{theorem}[Global stability under general technologies]\label{global_stability_general} 
Consider the system (\ref{SystemGeneral}) and let Assumption \ref{FF_a} be satisfied. Then, there exists a BGP, whose growth rates are given by $G_y=G_k=G_n^{\psi_k}$ and $G_a=G_n^{\psi_a}$. Moreover, this BGP is stable in the sense that: when $K_0>0,A_0>0$, the sequences ${A_t}/{G_a^t},{K_t}/{G_k^t}$ converge to $L_0^{\psi_a}\hat{A}_*,L_0^{\psi_k}\hat{K}_*$.

\end{theorem}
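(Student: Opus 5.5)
The plan is to reduce the general system (\ref{SystemGeneral}) to a two-dimensional detrended system, exactly as in Lemma~\ref{lemma-2dim-system}, and then apply Lemma~\ref{convergence_general}. Set $G_a\equiv G_n^{\psi_a}$ and $G_k\equiv G_n^{\psi_k}$. Normalize by population: write $K_t=L_t^{\psi_k}\hat K_t$ and $A_t=L_t^{\psi_a}\hat A_t$, where $L_t^{\psi_i}=L_0^{\psi_i}G_i^t$.

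\emph{Output.} Applying the homogeneity in Assumption~\ref{FF_a}.\ref{FF_a2} with $\lambda=L_t$ gives
\[
Y_t=F\big(L_t^{\psi_a}\hat A_t,L_t^{\psi_k}\hat K_t,L_t\lambda_y\big)=L_t^{\psi_k}\hat F(\hat A_t,\hat K_t).
\]
Hence $H_{t+1}=\gamma s L_t^{\psi_k}\hat F(\hat A_t,\hat K_t)$.

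\emph{Knowledge.} For $F_a$ we need the arguments scaled by a single $\lambda$. Use $\lambda=L_t$ and write $\lambda_aL_{t+1}=L_t\,\lambda_aG_n$. The second identity in Assumption~\ref{FF_a}.\ref{FF_a2} then gives
\[
F_a(A_t,H_{t+1},\lambda_aL_{t+1})=L_t^{\psi_a}\hat F_a(\hat A_t,\hat K_t).
\]

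\emph{Detrended system.} Dividing (\ref{Kgeneral}) by $L_t^{\psi_k}$ and (\ref{Ageneral}) by $L_t^{\psi_a}$, and using $L_{t+1}^{\psi_i}=G_iL_t^{\psi_i}$, yields
\[
G_k\hat K_{t+1}=(1-\delta_k)\hat K_t+(1-\gamma)s\hat F(\hat A_t,\hat K_t),\qquad G_a\hat A_{t+1}=(1-\delta_a)\hat A_t+\hat F_a(\hat A_t,\hat K_t).
\]
Call this map $f=(f_1,f_2)$ on $\rr^2_+$. Its positive fixed points are exactly the solutions of (\ref{vectorAK}), so by Assumption~\ref{FF_a}.\ref{FF_a3} there is a unique one, $(\hat K_*,\hat A_*)$.

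\emph{Existence of the BGP.} Starting from $\hat K_0=\hat K_*$ and $\hat A_0=\hat A_*$ keeps the normalized state constant. Thus $K_t=L_0^{\psi_k}\hat K_*G_k^t$ and $A_t=L_0^{\psi_a}\hat A_*G_a^t$, and $Y_t=L_t^{\psi_k}\hat F(\hat A_*,\hat K_*)$ grows at $G_k$. This gives a BGP with $G_y=G_k=G_n^{\psi_k}$ and $G_a=G_n^{\psi_a}$.

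\emph{Hypotheses of Lemma~\ref{convergence_general}.} Continuity is immediate. Order preservation follows because $\hat F$ and $\hat F_a$ are increasing and the linear terms have nonnegative coefficients $1-\delta_i\ge 0$.

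The main step is the subhomogeneity $f_i(cx)<cf_i(x)$ for $c>1$ and $x\in\rr^2_{++}$. Since the linear parts are exactly homogeneous, it suffices to show $\hat F(cx)<c\hat F(x)$, and likewise for $\hat F_a$. For this, use strict concavity together with $\hat F(0,0)=0$. The point $x$ is a proper convex combination of $0$ and $cx$:
\[
x=\tfrac1c(cx)+\big(1-\tfrac1c\big)0 .
\]
Strict concavity then gives
\[
\hat F(x)>\tfrac1c\hat F(cx)+\big(1-\tfrac1c\big)\hat F(0)=\tfrac1c\hat F(cx),
\]
which is the claim; the same argument applies to $\hat F_a$. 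I expect this to be the only delicate point, namely checking that strict concavity is available on a segment reaching the boundary point $0$. The assumption that concavity holds on $\rr^2_+$ with $\hat F(0,0)=0$ makes this legitimate.

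\emph{Conclusion.} Lemma~\ref{convergence_general} now gives $(\hat K_t,\hat A_t)\to(\hat K_*,\hat A_*)$ for any positive initial condition, since $K_0,A_0>0$ implies $\hat K_0,\hat A_0>0$. Undoing the normalization, $K_t/G_k^t=L_0^{\psi_k}\hat K_t\to L_0^{\psi_k}\hat K_*$ and $A_t/G_a^t\to L_0^{\psi_a}\hat A_*$, as stated in Theorem~\ref{global_stability_general}.
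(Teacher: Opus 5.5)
Your proposal is correct and follows essentially the same route as the paper. You detrend by $L_t^{\psi_k}$ and $L_t^{\psi_a}$ using the homogeneity in Assumption~\ref{FF_a} (the paper's Lemma~\ref{lemma_transformation}), read off the BGP from the fixed point $(\hat K_*,\hat A_*)$, and apply Lemma~\ref{convergence_general}; your derivation of $\hat F(cx)<c\hat F(x)$ from strict concavity and $\hat F(0,0)=0$ spells out the hypothesis check that the paper leaves as ``we can check.''
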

\begin{proof} See Appendix \ref{section-stability-proofs}.\end{proof}

Theorem \ref{global_stability_general} is a generalized version of Theorem \ref{global_stability}. Indeed, in Theorem \ref{global_stability}, $\psi_k$ and $\psi_a$ are given by \eqref{Ga*} while $\hat{A}_*=A_*$ and $\hat{K}_*=K_*$ are determined by \eqref{formulaA_ss} and \eqref{formulaK_ss}.

\subsection{Instability of the economic system: the influence of the difficulty of innovation}
\label{section-instability}
In the accumulation of knowledge \eqref{AGamma}, the parameter $\gamma$ and the effective elasticity $\Gamma\equiv \gamma_a+\alpha_a\gamma_h$ play an important role.  We will demonstrate that when $\gamma_a$ is sufficiently low, our economic system can become unstable. 
Recall that
\[
    f_1(K, A) = a_k K + b_k A^{\alpha_a} K^{\alpha_k}, \qquad
    f_2(K, A) = a_a A + b_a A^{\gamma_a + \alpha_a \gamma_h} K^{\alpha_k \gamma_h}.
\]

Our dynamical system (\ref{system-tilde}) becomes
\begin{align}\label{system-KA}
\begin{bmatrix}
    \hat{K}_{t+1} \\
    \hat{A}_{t+1}
\end{bmatrix}
=\begin{bmatrix}
    f_1(\hat{K}_t, \hat{A}_t) \\
    f_2(\hat{K}_t, \hat{A}_t)
\end{bmatrix}
=
\begin{bmatrix}
    a_k \hat{K}_t + b_k \hat{A}_t^{\alpha_a} \hat{K}_t^{\alpha_k} \\
    a_a \hat{A}_t + b_a \hat{A}_t^{\gamma_a + \alpha_a \gamma_h} \hat{K}_t^{\alpha_k \gamma_h}
\end{bmatrix}.
\end{align}


\begin{proposition}[\emph{Instability of the Steady State}]\label{unstable}Consider the steady state $(K_*, A_*)$ of the system (\ref{system-tilde}) and denote  $J_*=J(K_*,A_*)$ the Jacobian matrix at $(K_*, A_*)$. 

The steady state $(K_*, A_*)$ is unstable if  $\phi_a\equiv(1-\gamma_a)(1-\alpha_k)-\alpha_a\gamma_h<0$ or $\big|det(J_*)\big|>1$, where \begin{align*}det(J_*)=    (a_k + \alpha_k(1 - a_k))(a_a + (\gamma_a + \alpha_a \gamma_h)(1 - a_a)) - \alpha_a\gamma_h (1 - a_k) \alpha_k  (1 - a_a).
\end{align*}
This leads to two particular cases:
\begin{enumerate}
    \item\label{gamma_a_low}
If  $det(J_*)<-1$ (which happens when $\gamma_a$ is small enough), then the steady state $(K_*, A_*)$ is unstable.

\item\label{gamma_a_hight} If $det(J_*)>1$ or $\phi_a<0$ (which happens when $\gamma_a$ is high enough), then the steady state $(K_*, A_*)$ is unstable.
\end{enumerate}
\end{proposition}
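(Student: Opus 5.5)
We compute the Jacobian $J_*$ of the map $(K,A)\mapsto(f_1(K,A),f_2(K,A))$ in \eqref{system-KA} at the steady state and use the steady-state equations to eliminate $b_k,b_a$. At $(K_*,A_*)$ we have $K_*=a_kK_*+b_kA_*^{\alpha_a}K_*^{\alpha_k}$, hence $b_kA_*^{\alpha_a}K_*^{\alpha_k-1}=1-a_k$, and similarly $b_aA_*^{\Gamma-1}K_*^{\alpha_k\gamma_h}=1-a_a$ with $\Gamma=\gamma_a+\alpha_a\gamma_h$. Differentiating,
\begin{align*}
\partial_K f_1&=a_k+\alpha_k(1-a_k), & \partial_A f_1&=\alpha_a(1-a_k)\tfrac{K_*}{A_*},\\
\partial_K f_2&=\alpha_k\gamma_h(1-a_a)\tfrac{A_*}{K_*}, & \partial_A f_2&=a_a+\Gamma(1-a_a).
\end{align*}
The product of the off-diagonal entries has the ratios $K_*/A_*$ cancel, giving exactly the stated formula for $\det(J_*)$, and $\mathrm{tr}(J_*)$ is the sum of the diagonal entries.

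For the determinant criterion: $\det(J_*)=\lambda_1\lambda_2$, so if $|\det(J_*)|>1$ at least one eigenvalue has modulus exceeding one. For a $C^1$ map with an eigenvalue of modulus greater than one at a fixed point, the fixed point is not Lyapunov stable; we invoke the standard linearization (unstable manifold) theorem, which applies since $f$ is smooth on $\rr^2_{++}$ near $(K_*,A_*)$. We then check the two special cases: as $\gamma_a\to-\infty$, the factor $a_a+\Gamma(1-a_a)$ becomes very negative while the first factor is positive (between $a_k$ and $1$), so $\det(J_*)<-1$. For large $\gamma_a$ the same factor becomes large and $\det(J_*)>1$.

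For $\phi_a<0$, we use the characteristic polynomial $p(\lambda)=\lambda^2-\mathrm{tr}(J_*)\lambda+\det(J_*)$ and evaluate it at $\lambda=1$. Writing $u=1-a_k>0$, $v=1-a_a>0$, we have $1-\partial_Kf_1=(1-\alpha_k)u$ and $1-\partial_Af_2=(1-\Gamma)v$. Hence
\[
p(1)=(1-\alpha_k)(1-\Gamma)uv-\alpha_a\gamma_h\alpha_k uv=uv\big[(1-\alpha_k)(1-\gamma_a)-\alpha_a\gamma_h\big]=uv\,\phi_a .
\]
If $\phi_a<0$ then $p(1)<0$; since $p(\lambda)\to+\infty$ as $\lambda\to\infty$, there is a real eigenvalue strictly greater than $1$, and instability follows as before. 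The main point requiring care is this algebraic identity $p(1)=uv\phi_a$ (the $\alpha_a\gamma_h$ terms must combine correctly). A secondary point is citing the linearization result for instability when only one eigenvalue lies outside the unit circle, possibly with the other on it. For this we cite the standard theorem that a fixed point of a $C^1$ map with spectral radius greater than one is unstable.
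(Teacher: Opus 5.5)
Your proposal is correct and follows the paper's proof essentially step for step. Both use the same steady-state substitution for the Jacobian entries, the identity $p(1)=(1-a_k)(1-a_a)\phi_a$ together with $p(\lambda)\to+\infty$ to obtain a real eigenvalue above one, $|\det(J_*)|=|\lambda_1||\lambda_2|$ for the determinant criterion, and the spectral-radius instability theorem for $C^1$ maps (the paper cites Elaydi, Theorem 4.11).

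Your extra check of the parenthetical claims, which the paper does not prove, is sound with two additions. First, $a_a=(1-\delta_a)/G_n^{\psi_a}$ itself varies with $\gamma_a$, but it tends to $1-\delta_a<1$ as $|\gamma_a|\to\infty$ because $\psi_a\to 0$. Second, the factor $a_k+\alpha_k(1-a_k)$ is strictly positive only when $\alpha_k>0$ or $\delta_k<1$.
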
 
\begin{proof}    See Appendix \ref{section-instability-proofs}.\end{proof}


Particular case \ref{gamma_a_low} in Proposition \ref{unstable} shows that a sufficiently low value of $\gamma_a$ can make the steady state unstable. Economically, a low $\gamma_a$ means that the production of new ideas responds weakly, or even negatively, to the existing stock of knowledge. In this case, as the knowledge stock increases, the generation of new ideas may fall short with the depreciation or obsolescence of existing knowledge. If investment in the inputs that support knowledge accumulation is insufficient to compensate for this decline, the effective knowledge stock can fall, generating a destabilizing feedback. Thus, sufficiently weak knowledge spillovers in the innovation process can limit convergence to the steady state and lead instead to dynamic instability.

Point \ref{gamma_a_hight} highlights a different source of instability. When $\gamma_a$ is sufficiently high, the production of new ideas becomes highly responsive to the existing stock of knowledge. A small increase in the knowledge stock can then generate a disproportionately large increase in new ideas, which further raises the knowledge stock in subsequent periods. This strong positive feedback can cause the economy to exceed its steady state rather than converge toward it, making the steady state unstable.

Middle values of $\gamma_a$ will be discussed in the following result, which also complements Theorem \ref{global_stability}.

\begin{proposition}[Asymptotic stability]
\label{asymptotically-stable}
The steady state $(K_*, A_*)$ of the system (\ref{system-tilde})  is asymptotically stable if $a_k,a_a\in (0,1)$ and
\begin{subequations}
\begin{align}
\label{tracepositive}   
  - \frac{a_k + \alpha_k (1 - a_k) + a_a}{1-a_a}< \gamma_a + \alpha_a \gamma_h<1,\\
 \label{differencepositive}     (1 - \alpha_k)(1 - \gamma_a) - \alpha_a \gamma_h>0.
\end{align}
\end{subequations}

\end{proposition}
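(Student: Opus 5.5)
We linearize the system \eqref{system-KA} at the steady state and apply the standard Jury (Schur--Cohn) criterion for $2\times 2$ matrices: the fixed point is asymptotically stable if both eigenvalues of $J_*$ lie strictly inside the unit circle, which holds if and only if
\[
1-\mathrm{tr}(J_*)+\det(J_*)>0,\qquad 1+\mathrm{tr}(J_*)+\det(J_*)>0,\qquad \det(J_*)<1 .
\]
First we compute $J_*$. The steady-state equations $K_*=a_kK_*+b_kA_*^{\alpha_a}K_*^{\alpha_k}$ and $A_*=a_aA_*+b_aA_*^{\Gamma}K_*^{\alpha_k\gamma_h}$, with $\Gamma\equiv\gamma_a+\alpha_a\gamma_h$, give $b_kA_*^{\alpha_a}K_*^{\alpha_k-1}=1-a_k$ and $b_aA_*^{\Gamma-1}K_*^{\alpha_k\gamma_h}=1-a_a$. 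Using these identities, we obtain
\begin{align*}
J_{11}&=a_k+\alpha_k(1-a_k), & J_{12}&=\alpha_a(1-a_k)\tfrac{K_*}{A_*},\\
J_{21}&=\alpha_k\gamma_h(1-a_a)\tfrac{A_*}{K_*}, & J_{22}&=a_a+\Gamma(1-a_a).
\end{align*}
The trace is therefore $T=J_{11}+J_{22}$, and the determinant $D$ is the expression stated in Proposition~\ref{unstable}. The off-diagonal product $J_{12}J_{21}=\alpha_a\alpha_k\gamma_h(1-a_k)(1-a_a)\ge 0$ does not depend on the levels.

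Next we verify the three Jury conditions.

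\emph{Condition $1-T+D>0$.} We write $1-T+D=(1-J_{11})(1-J_{22})-J_{12}J_{21}$, where $1-J_{11}=(1-\alpha_k)(1-a_k)$ and $1-J_{22}=(1-\Gamma)(1-a_a)$. This gives
\[
1-T+D=(1-a_k)(1-a_a)\bigl[(1-\alpha_k)(1-\Gamma)-\alpha_a\alpha_k\gamma_h\bigr]=(1-a_k)(1-a_a)\,\phi_a,
\]
since $(1-\alpha_k)(1-\gamma_a-\alpha_a\gamma_h)-\alpha_a\alpha_k\gamma_h=(1-\alpha_k)(1-\gamma_a)-\alpha_a\gamma_h$. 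It is positive by \eqref{differencepositive} and $a_k,a_a\in(0,1)$.

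\emph{Condition $D<1$.} We have $J_{11}\in(0,1)$ because $a_k\in(0,1)$ and $\alpha_k<1$ (with $\alpha_k\ge0$). Also $J_{22}<1$ by the right inequality $\Gamma<1$ in \eqref{tracepositive}. The difficulty is that $J_{22}$ may be negative. If $J_{22}\ge0$, then $D\le J_{11}J_{22}<1$. If $J_{22}<0$, then $D<0<1$, since $J_{11}>0$ and $J_{12}J_{21}\ge0$.

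\emph{Condition $1+T+D>0$.} This is the main obstacle, because $\Gamma$ can be negative. The left inequality in \eqref{tracepositive} says exactly that $J_{22}>-J_{11}$, i.e. $T>0$. We write
\[
1+T+D=(1+J_{11})(1+J_{22})-J_{12}J_{21}.
\]
From $1+J_{22}>1-J_{11}>0$ we obtain $(1+J_{11})(1+J_{22})>1-J_{11}^2$. It then remains to check that $J_{12}J_{21}$ does not exceed this quantity. This should follow from $J_{12}J_{21}=\alpha_a\alpha_k\gamma_h(1-a_k)(1-a_a)$ together with \eqref{differencepositive}, which forces $\alpha_a\gamma_h<(1-\alpha_k)(1-\gamma_a)$ and so bounds $\alpha_a\gamma_h$. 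If the direct estimate fails, we will instead use $T>0$ and $D>-1$ separately. For $D>-1$, we need
\[
J_{12}J_{21}<1+J_{11}J_{22},
\]
which we would establish by combining $J_{11}J_{22}>-J_{11}^2>-1$ with the bound on $\alpha_a\gamma_h$, possibly after absorbing the $\alpha_k$ factor.

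Once all three conditions hold, both eigenvalues have modulus less than one. Asymptotic stability then follows from the linearization (Hartman--Grobman / Perron) theorem, since $f_1$ and $f_2$ are $C^1$ near the positive steady state.
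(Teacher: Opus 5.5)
\paragraph{Gap in the condition $1+T+D>0$.}
Most of your argument coincides with the paper's: the same Jacobian at $(K_*,A_*)$, the same case analysis giving $D<1$, and the same identity $1-T+D=(1-a_k)(1-a_a)\phi_a$. The paper uses this identity as $|T|-1-D=-(1-a_k)(1-a_a)\phi_a$ once $T>0$. The gap is the remaining Jury condition $1+T+D>0$. You call it the main obstacle and then give only a sketch (``this should follow\ldots'', ``if the direct estimate fails, we will instead\ldots''), so as written it is not proved. The bound you invoke, $\alpha_a\gamma_h<(1-\alpha_k)(1-\gamma_a)$, cannot close it on its own. The hypotheses bound only $\Gamma=\gamma_a+\alpha_a\gamma_h$ from below, not $\gamma_a$, so $1-\gamma_a$ and $\alpha_a\gamma_h$ can both be arbitrarily large. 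Your fallback via $D>-1$ is likewise left as an intention.

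\paragraph{The missing step.}
The fix is one line:
\[
1+T+D=(1-T+D)+2T .
\]
Both terms are positive. The first is positive by \eqref{differencepositive}, which is your first Jury condition. The second is positive by the left inequality in \eqref{tracepositive}, which you correctly read as $J_{22}>-J_{11}$, i.e.\ $T>0$.

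The same fact settles both of your proposed routes:
\begin{itemize}
\item[] \emph{Fallback route.} $1+D>T>0$ gives $D>-1$ immediately.
\item[] \emph{Direct route.} Your first condition is exactly $J_{12}J_{21}<(1-J_{11})(1-J_{22})$, and $(1+J_{11})(1+J_{22})-(1-J_{11})(1-J_{22})=2T>0$.
\end{itemize}
This is why the paper can state the test as $|T|-1<D<1$, replace $|T|$ by $T$ using \eqref{tracepositive}, and stop after checking $D<1$ and $T-1-D<0$. With this line inserted, your proof is complete.
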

\begin{proof}    See Appendix \ref{section-instability-proofs}.\end{proof}


In Theorem \ref{global_stability}, we impose $\gamma_a+\alpha_a\gamma_h\geq 0$ to establish global asymptotic stability of $(K_*,A_*)$. Proposition \ref{asymptotically-stable} relies on weaker conditions and therefore naturally yields a weaker result: it establishes asymptotic stability of $(K_*,A_*)$. 

Importantly, Proposition \ref{asymptotically-stable} allows for $\gamma_a+\alpha_a\gamma_h<0$, provided that this term is not too low, in the sense that condition (\ref{tracepositive}) is satisfied. This possibility may occur even when $\gamma_a$ is negative (as $\alpha_a$ and $\gamma_h$ are positive) so that the production of new ideas decreases with the existing stock of knowledge. This captures declining idea productivity: as the stock of knowledge expands, generating additional ideas becomes increasingly difficult. Such a mechanism is consistent with the empirical evidence on declining research productivity reported by \citet{Bloom2020} and with the semi-endogenous growth frameworks developed by \citet{JonesAER2022} and \citet{Jonesreview}.

\subsection{Innovation cycle} 


Proposition \ref{unstable} has shown that the difficulty of finding innovations may make the economic system unstable. We now argue that it may also generate an innovation cycle.  
Let us consider the system (\ref{system-KA}) and provide a formal definition of cycle.

\begin{definition}
\begin{enumerate}
    \item We say that there exists a two-period innovation cycle if there is a path $(\hat{K}_t, \hat{A}_t)_t$ satisfying the system (\ref{system-KA}) and
$(\hat{A}_{2t},\hat{A}_{2t+1})=(\hat{A}_{0},\hat{A}_{1})$ for any $t$ with $\hat{A}_{0},\hat{A}_{1}>0$, $\hat{A}_{0}\not=\hat{A}_{1}$.
\item Given a positive integer $N\geq 2$, we say that there exists an N-period innovation cycle if there exists a path $(\hat{K}_t, \hat{A}_t)_t$ satisfying the system (\ref{system-KA}) and
$\hat{A}_{t}=\hat{A}_{N+t}$ for all $t$ with $\hat{A}_{i}>0$, $\hat{A}_{i}\not=\hat{A}_{j}$ for all $i,j\in \{0,1,\ldots,N-1\}$ with $i\not=j$.
\end{enumerate}
\end{definition}

For the sake of tractability, we focus on the case $\gamma_h=0$. This assumption implies that R\&D expenditure does not directly affect the production of new ideas, and the production of new ideas may nevertheless depend on the existing stock of knowledge and researchers who make use of existing knowledge. In this case, we have 
\begin{align}\label{map_fa}
      \hat{A}_{t+1}= f_a(\hat{A}_t), \text{ where }f_a(x)\equiv a_a x + b_a x^{\gamma_a} \text{ } \forall x.
\end{align}

\begin{proposition}[Innovation Cycles]\label{cycle}Consider the system (\ref{system-KA}). Assume that $\gamma_h=0$ and $\gamma_a<0$.

\begin{enumerate}
\item \label{point_2period} 

If $-1\leq \gamma_a <0$, there does not exist a two-period innovation cycle. 

However, if 
\begin{align}\label{gamma_acondition}
    \gamma_a+1+(1-\gamma_a)\frac{1-\delta_a}{G_n^{\frac{\gamma_\ell}{1-\gamma_a}}}<0,
\end{align}
then there exists a two-period innovation cycle $(\hat{A}_{0},\hat{A}_{1})$. Note that the condition (\ref{gamma_acondition}) holds if $\gamma_a$ is low enough.

\item \label{point_Nperiod} Assume that $\gamma_a$ is negative and also satisfies
\begin{align}\frac{a^2}{(a+1)(1+2a+2a^2)}>\big(\frac{a+1}{2a}\big)^{\gamma_a}, \text{ where } a\equiv \frac{1-\delta_a}{G_n^{\frac{\gamma_\ell}{1-\gamma_a}}}.\label{key_gamma_a}
\end{align}
Then, for every integer $N\geq 2$, there exists an N-period innovation cycle  $(\hat{A}_0,\hat{A}_1,\ldots, \hat{A}_{N-1})$. 
\end{enumerate}
\end{proposition}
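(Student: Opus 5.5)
The plan is to reduce everything to a one-dimensional map with fixed point $1$. With $\gamma_h=0$ we have $\phi_a=(1-\gamma_a)(1-\alpha_k)$ and $\phi_n=(1-\alpha_k)\gamma_\ell$. Theorem~\ref{theorem1} then gives $G_a=G_n^{\gamma_\ell/(1-\gamma_a)}$, so $a_a=a$ with $a$ as in \eqref{key_gamma_a}. The $\hat A$-equation \eqref{map_fa} is autonomous, and $\hat K_t$ is then generated from \eqref{system-tildeK} for any $\hat K_0>0$. Hence an $N$-period innovation cycle is exactly a periodic orbit of $f_a$ of minimal period $N$.

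The fixed point satisfies $(1-a)A_*=b_aA_*^{\gamma_a}$. Writing $\hat A_t=A_*y_t$ gives
\[
y_{t+1}=g(y_t),\qquad g(y)\equiv ay+(1-a)y^{\gamma_a},
\]
whose unique positive fixed point is $y=1$, since $g(y)=y$ iff $y^{\gamma_a}=y$. Because $\gamma_a<0$, the map $g$ is strictly convex on $(0,\infty)$, with $g(y)\to\infty$ both as $y\to0^+$ and as $y\to\infty$. Moreover $g'(1)=a+(1-a)\gamma_a$.

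For part~\ref{point_2period}, nonexistence when $-1\le\gamma_a<0$, I would suppose $g(y_0)=y_1$ and $g(y_1)=y_0$ with $y_0<1<y_1$. Adding the two equations and subtracting them gives
\[
y_0^{\gamma_a}+y_1^{\gamma_a}=y_0+y_1,\qquad \frac{y_0^{\gamma_a}-y_1^{\gamma_a}}{y_1-y_0}=\frac{1+a}{1-a}>1 .
\]
The aim is to show that the first relation forces the difference quotient to be at most $1$, i.e.\ $y_0+y_0^{\gamma_a}\le y_1+y_1^{\gamma_a}$. When $\gamma_a=-1$ the first relation gives $y_1=1/y_0$, and then the quotient equals exactly $1$, a contradiction. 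For general $\gamma_a\in[-1,0)$, I would parametrize the constraint curve $y_1=\theta(y_0)$ defined by $y_1-y_1^{\gamma_a}=y_0^{\gamma_a}-y_0$. I would then compare $m(y)=y+y^{\gamma_a}$ along it, using that $|\gamma_a|\le1$ makes $y^{\gamma_a}$ decay no faster than $1/y$, so that the $\gamma_a=-1$ case is the extreme one. This comparison is the delicate computation in part~1.

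For existence, the key observation is that \eqref{gamma_acondition} is exactly $g'(1)+1<0$, i.e.\ $g'(1)<-1$. Hence $(g\circ g)'(1)=g'(1)^2>1$, so $g^2(y)<y$ for $y$ slightly below $1$. As $y\to0^+$, $g(y)\to\infty$ and therefore $g^2(y)\ge a\,g(y)\to\infty>y$. The intermediate value theorem then yields $y_0\in(0,1)$ with $g^2(y_0)=y_0$. Since $1$ is the only fixed point of $g$, $y_1=g(y_0)\neq y_0$, which gives the two-period cycle. The condition holds for $\gamma_a$ low enough because its left side tends to $-\infty$ as $\gamma_a\to-\infty$, using $a\le1$.

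For part~\ref{point_Nperiod}, I would invoke the Li--Yorke theorem (``period three implies chaos''). This requires exhibiting a point $y$ with
\[
g^3(y)\le y<g(y)<g^2(y)\quad\text{or}\quad g^3(y)\ge y>g(y)>g^2(y).
\]
That gives periodic points of every period $N\ge1$ for the continuous map $g$ on an interval, and the periodic points with $N\ge2$ are the required cycles. The natural test point, suggested by the form of \eqref{key_gamma_a}, is $y$ chosen so that $g(y)\approx\frac{a+1}{2a}\,y$. I would then compute $g(y)$, $g^2(y)$ and $g^3(y)$, bounding the $(1-a)y^{\gamma_a}$ terms in the large iterates from above by the $ay$ term. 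The goal is to reduce the chain of inequalities to $\frac{a^2}{(a+1)(1+2a+2a^2)}>\big(\frac{a+1}{2a}\big)^{\gamma_a}$. Identifying the right starting point and carrying out this three-step bookkeeping is the main obstacle. If the direct attempt fails, I would first try working with the rescaled variable $z=y^{1-\gamma_a}$ or choosing $y$ as the minimizer of $g$.
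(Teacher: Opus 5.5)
\textbf{Main gap: Part~\ref{point_Nperiod} is not proved.} You describe only a strategy, and the test point you propose is not the one \eqref{key_gamma_a} is built for. In \eqref{key_gamma_a}, $\frac{a+1}{2a}$ is not $g(y)/y$. It is $aX$, where $X=g(y)/y=\frac{a+1}{2a^2}$ is the midpoint of $(1/a,1/a^2)$. With your choice $g(y)/y\approx\frac{a+1}{2a}<1/a$, you get $a\,g(y)/y\approx\frac{a+1}{2}<1$. The factor $(a\,g(y)/y)^{\gamma_a}$, which must be small, then grows as $\gamma_a$ decreases, so your bounding cannot close.

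The missing idea is to rewrite $g^3(y)=y$ in the ratio variable $X=g(y)/y=a+(1-a)y^{\gamma_a-1}$. This is a decreasing bijection of $(0,\infty)$ onto $(a,\infty)$, with $X=1$ iff $y=1$. Iterating $g(z)=z\,(a+(1-a)z^{\gamma_a-1})$ gives $g^3(y)/y=1-G_2(X)$, where
\[
G_2(X)=1-a^2X-a(X-a)X^{\gamma_a}-(X-a)\big(aX+(X-a)X^{\gamma_a}\big)^{\gamma_a}.
\]
Because $\gamma_a<0$, we have $\big(aX+(X-a)X^{\gamma_a}\big)^{\gamma_a}\le (aX)^{\gamma_a}$ and $X^{\gamma_a}<(aX)^{\gamma_a}$. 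Hence $G_2(X)\ge 1-a^2X-(1+a)(X-a)(aX)^{\gamma_a}$, and at $X=\frac{a+1}{2a^2}$ this lower bound is positive precisely under \eqref{key_gamma_a}. Since $G_2(X)\le 1-a^2X\to-\infty$, the intermediate value theorem gives a root $X_0>1$, i.e.\ a point $y_0\neq1$ with $g^3(y_0)=y_0$. Because $1$ is the only fixed point, $y_0$ is neither fixed nor $2$-periodic, so it has minimal period $3$, and Li--Yorke (or Sharkovskii) gives all periods.

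Note also that at this test point the orbit is ordered $g^3(y)<y<g^2(y)<g(y)$, which matches neither Li--Yorke chain you wrote. Verifying such a chain at one explicit point is therefore not what the estimate delivers; you need the IVT step that produces an actual $3$-cycle.

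\textbf{Part~1.} The nonexistence claim is also left open (your ``delicate computation''), although your route closes in a few lines. Using $y_0+y_1=y_0^{\gamma_a}+y_1^{\gamma_a}$, the desired inequality $y_0^{\gamma_a}-y_1^{\gamma_a}\le y_1-y_0$ is equivalent to $y_1\ge y_0^{\gamma_a}$. Set $u=y_0^{\gamma_a}>1$. The constraint reads $h(y_1)=u-u^{1/\gamma_a}$, with $h(y)=y-y^{\gamma_a}$ strictly increasing. For $-1\le\gamma_a<0$ we have $1/\gamma_a\le\gamma_a$, hence $u^{1/\gamma_a}\le u^{\gamma_a}$, so $h(y_1)\ge h(u)$ and $y_1\ge u$. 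Thus the difference quotient is at most $1<\frac{1+a}{1-a}$, a contradiction. The paper argues instead in the ratio variable: the two-cycle equation becomes $G(X)=1-a^2-a(X-a)-(X-a)X^{\gamma_a}=0$, with $G'(X)=-a-(1+\gamma_a)X^{\gamma_a}+a\gamma_aX^{\gamma_a-1}<0$, so $X=1$ is the only root.

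Your existence argument for the two-cycle is correct. It is the paper's argument ($G'(1)>0$ and $G(X)\to-\infty$) written in the $y$ coordinate. For ``$\gamma_a$ low enough'', use $1-a\to\delta_a>0$ as $\gamma_a\to-\infty$; the bound $a\le1$ alone is not sufficient.
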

\begin{proof}    See Appendix \ref{section-instability-proofs}.\end{proof}

The key point in  Proposition~\ref{cycle} is that a two-period innovation cycle may arise when $\gamma_a$ is sufficiently negative, i.e., the elasticity of production of ideas with respect to existing knowledge $\gamma_a$ is sufficiently negative. In this case, the economic system may fail to converge to a balanced growth path and instead follow a $N$-period cycle, where the length $N$ can be arbitrary.  To prove Proposition \ref{cycle}'s point \ref{point_Nperiod}, we first prove, by using \eqref{key_gamma_a}, the existence of a three-period cycle. Then, by the Li-Yorke Theorem \citep{LiYorke1975}, we have a $N-$period cycle.

Observe that conditions (\ref{gamma_acondition}) and (\ref{key_gamma_a}) for having innovation cycles can be satisfied if $\gamma_a$ is low and/or the depreciation rate $\delta_a$ is high. For example, when $G_n=1.1$, $\gamma_\ell=0.4$ and $\delta_a=0.5$, condition (\ref{gamma_acondition}) holds once $\gamma_a$ falls below $-2.96$, and condition (\ref{key_gamma_a}) once $\gamma_a$ falls below $-6.64$.\footnote{Recall that it would be more difficult to have a 3-period cycle than to have a 2-period cycle. Condition (\ref{key_gamma_a}) is only sufficient: three-period cycles can be located numerically for higher values of $\gamma_a$ as well. At $\gamma_a=-6$, for instance, the map $f_a$ admits the three-period cycle $\hat{A}_0\approx 1.345619$, $\hat{A}_1=f_a(\hat{A}_0)\approx 0.957586$, $\hat{A}_2=f_a(\hat{A}_1)\approx 2.696985$, $\hat{A}_3=\hat{A}_0$, even though (\ref{key_gamma_a}) does not hold there.}

The cycles in Proposition~\ref{cycle} arise from delayed negative feedback. When knowledge is above its balanced-growth level, the negative elasticity $\gamma_a$ reduces idea production, so knowledge subsequently falls relative to trend. When knowledge is below trend, idea production rises and knowledge recovers. Because this adjustment operates with a one-period lag, a sufficiently strong response can generate overshooting and alternating phases of high and low innovation rather than convergence. Knowledge depreciation reinforces this mechanism by reducing the persistence of past knowledge; it should be interpreted as economic obsolescence rather than the literal disappearance of ideas.

\begin{remark}[Cycles with $\gamma_h>0$]
\label{remark:cycles-gammah-positive}
The assumption $\gamma_h=0$ in Proposition~\ref{cycle} makes the knowledge dynamics one-dimensional and allows analytical results for cycles of arbitrary period. Remark \ref{remark:positive-rd-cycles} in Section~\ref{subsec:cycle-sim} provides an example with $\gamma_h=0.4$ in which the two-dimensional detrended system admits a locally attracting two-period cycle. Thus, cycles may also arise when R\&D expenditure directly affects innovation and capital feeds back into knowledge accumulation.
\end{remark}


It should be noted that innovation cycles in our framework are  endogenous and fully deterministic. They arise without changes in behavioral parameters or extrinsic shocks.  Our mechanism of the endogenous cycle contrasts with \citet{he2022}, where cycles arise from the interaction of learning-by-doing and fishing-out effects, together with step-wise technology upgrades. In our model, cycles are deterministic orbits of the knowledge map and can have any period $N$, whereas in \citet{he2022}'s model, they appear as inverted-U growth pulses of varying lengths. Together, these approaches point to different but complementary ways in which innovation can generate cyclical growth.

Our mechanism is different from and appears to be simpler than that of \citet{walde05}, who uses the portfolio approach (he considers that R\&D is an uncertain activity that is modeled by the Poisson process) to explain endogenous fluctuations. It also differs from that of \citet{EvansHonkapohjaRomer1998}. In their model, complementarity between capital goods generates two rational expectations growth paths, a fast one and a slow one, and an extrinsic random variable coordinates self-fulfilling shifts of expectations between them. Their cycles therefore rely on multiple equilibria and extrinsic uncertainty. In contrast, the periodic paths coexist with the unique BGP and occur away from it (Theorem \ref{theorem1}) in a fully deterministic and backward-looking economy, driven by the difficulty of producing new ideas when the stock of knowledge is large. The source of instability is instead the negative effect of the existing stock of knowledge on the production of new ideas.


\section{Numerical simulations}
\label{section5}
\subsection{Parameters}\label{params}
Table~\ref{tab:baseline-params} lists the baseline parameterization and, where one exists, the source for each value; one period is one year. {Three choices need comment. We set $\alpha_\ell=0.6$ rather than $0.7$ so that the three exponents sum to one ($\alpha_k+\alpha_\ell+\alpha_a=1$), which treats $A$ as an accumulated factor alongside capital and labor, as in \citet[Section~II.A, p.~416]{MankiwRomerWeil1992}; the benchmark $\alpha_k+\alpha_\ell=1$ is examined below. We set $\gamma_a=-0.2$, an order of magnitude smaller in absolute value than the aggregate evidence on declining research productivity \citep[Table~7, p.~1134]{Bloom2020};\footnote{{\citet[p.~127]{Jonesreview} writes ideas as $\dot A_t=R_t^{\lambda}A_t^{1-\beta}$; see also \citet[footnote~25, p.~1135]{Bloom2020}. The exponent $1-\beta$ corresponds to our parameter $\gamma_a$, and it is read there exactly as here: when it is negative, past discoveries ``make it harder to generate new ideas'' \citep[p.~127]{Jonesreview}. For the aggregate US economy $\beta=3.1$ \citep[Table~7, p.~1134]{Bloom2020}, that is, $\gamma_a\approx-2$ \citep[p.~131]{Jonesreview}. Both that value and our baseline lie inside the region of Proposition \ref{asymptotically-stable}; the cycle configuration of Section \ref{subsec:cycle-sim} is far more extreme, and we return to this gap there.}} 
the implied $\Gamma=-0.16$ satisfies Proposition \ref{asymptotically-stable}, so the baseline economy is locally stable. Finally, $s\gamma=0.06$ and $\lambda_y=0.9$ deliberately exceed observed R\&D intensities (about $2\%$--$3\%$ of GDP in the OECD) and research-employment shares: both make the R\&D channel visible in figures drawn on a common scale, neither affects long-run growth, and Section \ref{subsec:policy-sim} varies both.}


{\paragraph{Robustness to the returns-to-scale configuration.}
No result in this paper restricts aggregate returns to scale, and the choice can be settled analytically. Under the benchmark $\alpha_k+\alpha_\ell=1$, constant returns apply to the rival inputs alone, non-rivalry then delivering increasing returns to ideas and rival factors taken together \citep[p.~127]{Jonesreview}; it gives $\psi_k=1+(\alpha_a/\alpha_\ell)\psi_a$, which exceeds $1$ when $\alpha_a>0$ and 
\begin{align}\label{assum_additional}\phi_a\equiv (1-\gamma_a)(1-\alpha_k)-\alpha_a\gamma_h>0,\end{align} so per-capita output grows whatever the strength of the knowledge externality. Under our baseline $\psi_k$ is instead a weighted average of $1$ and $\psi_a$, hence $\psi_k=0.95$ and per-capita output falls by about $0.10\%$ per period, against $\psi_k=1.10$ and $+0.20\%$ under $\alpha_\ell=0.7$, where $\psi_a$ also rises from $0.65$ to $0.70$ and knowledge grows at $1.40\%$ instead of $1.30\%$. The dynamics are unaffected: neither $\phi_a=0.80$ nor $\Gamma=-0.16$ depends on $\alpha_\ell$, and the cycles of Proposition \ref{cycle}, derived under $\gamma_h=0$, are independent of $(\alpha_a,\alpha_k,\alpha_\ell)$ altogether. The configuration shifts the \emph{level} of long-run growth, not existence, uniqueness, stability, or the possibility of cycles.}

\begin{table}[H]
  \centering
  \footnotesize
  \setlength{\tabcolsep}{4pt}
  \renewcommand{\arraystretch}{1.12}
  \caption{Parameter values used in the simulation.}\label{tab:baseline-params}
  \begin{tabularx}{\textwidth}{@{}>{\raggedright\arraybackslash\leavevmode}p{3.8cm}
                                 c c
                                 >{\raggedright\arraybackslash\leavevmode}X@{}}
    \toprule\toprule
    \textbf{Quantity} & \textbf{Symbol} & \textbf{Value} & {\textbf{Source / justification}} \\
    \midrule
    \addlinespace[1pt]
    \multicolumn{4}{@{}l}{\textit{Production and allocation}}\\
    \addlinespace[1pt]
    Capital share            & $\alpha_k$    & $0.3$  &  {Capital's share in income is ``roughly one third'' \citep[p.~410]{MankiwRomerWeil1992}}\\
    Labor share              & $\alpha_\ell$ & $0.6$  &  {Constant returns to $(K,H,L)$ jointly, with $A$ in the role of $H$ \citep[Section~II.A, p.~416]{MankiwRomerWeil1992}; $\alpha_\ell=0.7$ discussed below}\\
    Knowledge share          & $\alpha_a$    & $0.1$  &  {Modest; condition \eqref{assum_additional} then holds with room, $(1-\gamma_a)(1-\alpha_k)=0.84>\alpha_a\gamma_h=0.04$}\\
    Saving rate              & $s$           & $0.3$  &  {Gross investment share of real GDP, measured as in \citet[p.~412]{MankiwRomerWeil1992}; at the upper end of observed averages}\\
    R\&D share of savings    & $\gamma$      & $0.2$  &  {Implies $s\gamma=0.06$, above observed R\&D intensities (see text)}\\
    \addlinespace[3pt]
    \multicolumn{4}{@{}l}{\textit{Depreciation and innovation}}\\
    \addlinespace[1pt]
    Capital depreciation     & $\delta_k$    & $0.05$ &  {Conventional value; \citet[p.~413]{MankiwRomerWeil1992} assume the \emph{sum} $g+\delta=0.05$, so this is an upper bound for $\delta_k$ alone}\\
    Knowledge depreciation   & $\delta_a$    & $0.02$ &  {Normalization; affects levels and the transition only (Theorem \ref{theorem1})}\\
    Innovation efficiency    & $\mu_a$       & $0.05$ &  {Normalization; affects levels and the transition only (Theorem \ref{theorem1})}\\
    Knowledge elasticity     & $\gamma_a$    & $-0.2$ &  {Declining research productivity: the aggregate $\beta=3.1$ \citep[Table~7, p.~1134]{Bloom2020} implies $\gamma_a\approx-2$, so $-0.2$ is conservative (see text)}\\
    R\&D-expenditure elasticity & $\gamma_h$ & $0.4$  &  {Decreasing returns to research inputs, $\gamma_h+\gamma_\ell=0.8<1$, as in the duplication externality of \citet{Jones1995}}\\
    R\&D labor elasticity    & $\gamma_\ell$ & $0.4$  &  {Same as $\gamma_h$}\\
    \addlinespace[3pt]
    \multicolumn{4}{@{}l}{\textit{Labor allocation and demographics}}\\
    \addlinespace[1pt]
    Production labor share   & $\lambda_y$   & $0.9$  &  {Assigns $10\%$ of labor to research, well above observed research-employment shares (see text)}\\
    Population growth        & $G_n$         & $1.02$ &  {Round value for long-run population growth; $n$ measured as the growth of the working-age population \citep[p.~412]{MankiwRomerWeil1992}}\\
    \midrule
    \addlinespace[1pt]
    \multicolumn{4}{@{}l}{ {\textit{Implied values}}}\\
    \addlinespace[1pt]
     {Total elasticity}    &  {$\Gamma$}        &  {$-0.16$}       &  {$\Gamma=\gamma_a+\alpha_a\gamma_h$; inside the region of Proposition \ref{asymptotically-stable}}\\
     {Growth elasticities} &  {$\psi_a,\psi_k$} &  {$0.65,\,0.95$} &  {Theorem \ref{theorem1}: $G_a=G_n^{\psi_a}$, $G_y=G_k=G_n^{\psi_k}$}\\
    \bottomrule
  \end{tabularx}
  \vspace{2pt}
  {\footnotesize\begin{minipage}{\textwidth}
  \emph{Notes.}  {Initial conditions are $K_0=10$ and $A_0=5$, which affect
  the transition only, and $L_0=50$, which enters $X_k$ and $X_a$ in
  \eqref{xkxa} and therefore scales the steady-state levels $(A_*,K_*,Y_*)$
  without affecting any growth rate. The last block reports quantities implied
  by the calibration rather than free parameters.}
  \end{minipage}}
\end{table}

\subsection{Baseline}\label{subsec:baseline-sim}
Figure~\ref{fig:baseline} reports the baseline over 150 periods, from $K_0=10$,
$A_0=5$, $L_0=50$. In panel~(a) capital, output, labor and the R\&D flow
$H_{t+1}=\gamma sY_t$ of \eqref{2g} grow throughout, while the knowledge stock
first falls for five periods -- the research flow does not yet offset
obsolescence at $\delta_a=0.02$ -- and then grows, but always more slowly than
output, since with $\gamma_a=-0.2$ a larger stock lowers the output of new ideas
for given research inputs.\footnote{The detrended stock $\hat A_t$ is therefore
not monotone, consistent with Proposition~\ref{monotone-remark}, which requires
$\Gamma=\gamma_a+\alpha_a\gamma_h\in(0,1)$ and so does not apply here, where
$\Gamma=-0.16$.} Output and labor end the sample almost superimposed: with
$\psi_k=0.95<1$, output per capita peaks at period~$58$ and then declines
towards $G_n^{\psi_k-1}-1\approx-0.10\%$ per period (Section~\ref{params}).

Panel~(b) shows the growth rates converging to the values of
Theorem~\ref{theorem1}, $G_n^{\psi_k}-1\approx1.90\%$ for output and capital and
$G_n^{\psi_a}-1\approx1.30\%$ for knowledge, with $g_{A,t}$ below $g_{Y,t}$ at
every date. Since Proposition~\ref{asymptotically-stable} guarantees stability
only near the BGP, the convergence shown here - from initial rates of $+50\%$
for capital and $-0.4\%$ for knowledge, far outside the panel - is a numerical
result rather than an implication of the theorem.

\begin{figure}[H]
  \centering
  \includegraphics[width=.92\linewidth]{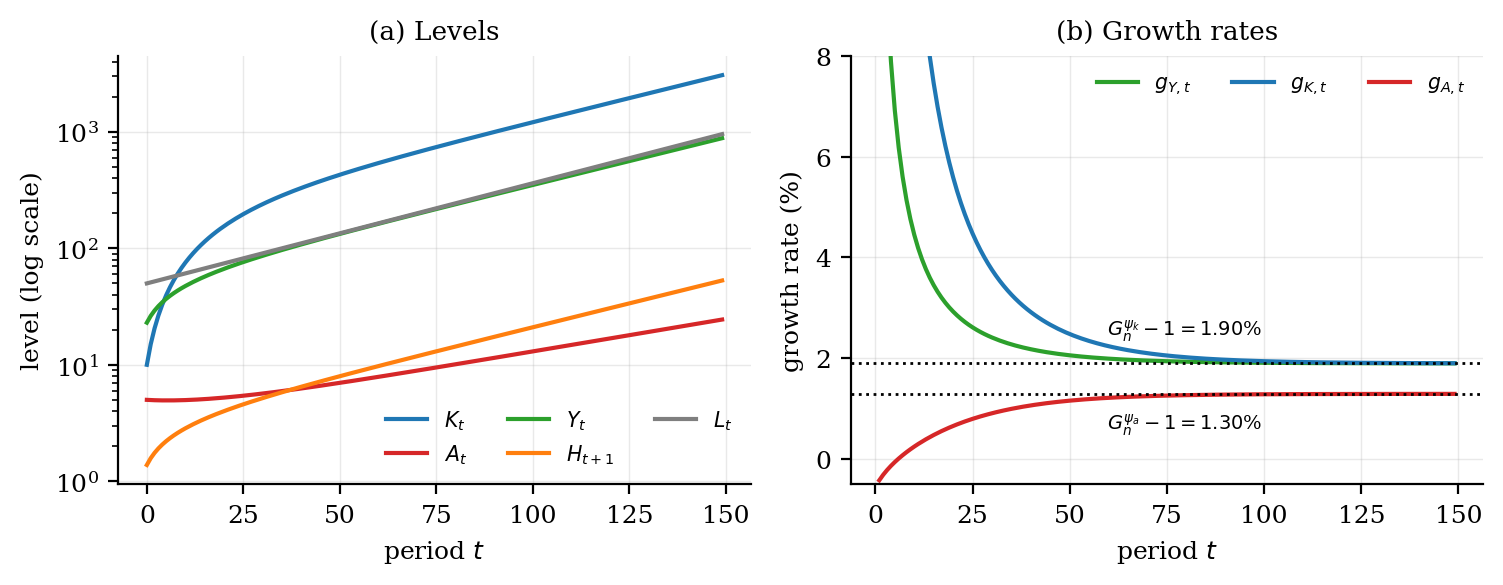}
  \caption{Baseline dynamics ($K_0=10$, $A_0=5$, $L_0=50$). (a) Levels of
  $K_t$, $A_t$, $Y_t$, $H_{t+1}$, $L_t$ (log scale); (b) growth rates
  $g_{Y,t}$, $g_{K,t}$, $g_{A,t}$, with the balanced-growth values of
  Theorem~\ref{theorem1} as dotted lines; the first periods of $g_{K,t}$ and
  $g_{Y,t}$ lie above the vertical range shown.}
  \label{fig:baseline}
\end{figure}

\subsection{Policy experiments}\label{subsec:policy-sim}
We now shock one parameter at a time at $t=20$ and compare the resulting paths of $g_{A,t}$ and $g_{Y,t}$ with the baseline. Figure~\ref{fig:policy} reports the deviations from the baseline path and Table~\ref{tab:policy} reports the size of the effects. The date of the intervention matters for the transition paths but not for the long-run columns. The experiments are permanent unless stated otherwise; decreases mirror the increases reported here, and temporary interventions (the parameter is changed for ten periods and then restored) are illustrated in panel~(c).

\begin{figure}[H]
  \centering
  \includegraphics[width=.98\linewidth]{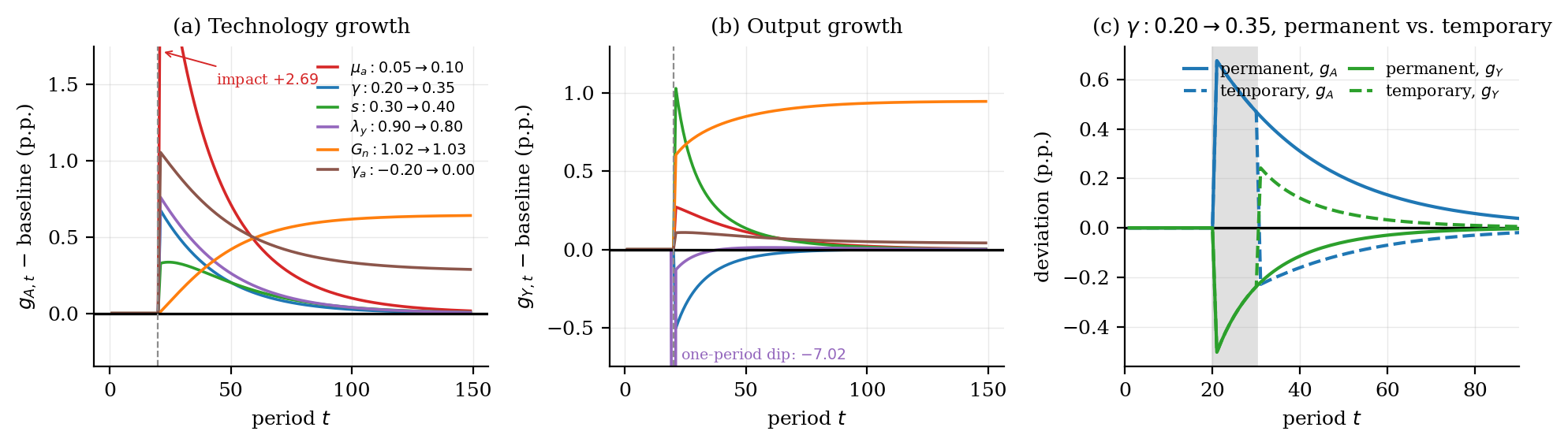}
  \caption{Policy experiments: deviations from the baseline growth path, in   percentage points, for permanent changes introduced at $t=20$ (dashed
  vertical line). (a) Technology growth (the impact effect of the $\mu_a$   increase, $+2.69$, is truncated); (b) output growth (the one-period level
  effect of the $\lambda_y$ reallocation is truncated); (c) permanent   versus temporary increase in $\gamma$, the shaded band being the intervention
  window.}
  \label{fig:policy}
\end{figure}

\begin{table}[H]
  \centering
  \footnotesize
  \setlength{\tabcolsep}{5pt}
  \renewcommand{\arraystretch}{1.12}
  \caption{Policy experiments: deviations from the baseline growth rates, in
  percentage points.}\label{tab:policy}
  \begin{tabularx}{\textwidth}{@{}>{\raggedright\arraybackslash\leavevmode}p{3.6cm}
        >{\leavevmode}X
        >{\centering\arraybackslash}p{1.25cm}>{\centering\arraybackslash}p{1.25cm}
        >{\centering\arraybackslash}p{1.25cm}>{\centering\arraybackslash}p{1.25cm}@{}}
    \toprule\toprule
    & & \multicolumn{2}{c}{\textbf{Short run}} & \multicolumn{2}{c}{\textbf{Long run}}\\
    \cmidrule(lr){3-4}\cmidrule(lr){5-6}
    \textbf{Experiment} & \textbf{Channel} & $\Delta g_A$ & $\Delta g_Y$ & $\Delta g_A$ & $\Delta g_Y$\\
    \midrule
    \addlinespace[1pt]
    $\mu_a:0.05\to0.10$     & Research efficiency           & $+2.69$ & $+0.27$ & $0$ & $0$\\
    $\gamma:0.20\to0.35$    & Savings reallocated to R\&D   & $+0.67$ & $-0.50$ & $0$ & $0$\\
    $s:0.30\to0.40$         & Saving rate                   & $+0.34$ & $+1.03$ & $0$ & $0$\\
    $\lambda_y:0.90\to0.80$ & Labor reallocated to R\&D     & $+0.76$ & $-7.02$ & $0$ & $0$\\
    \addlinespace[3pt]
    $G_n:1.02\to1.03$       & Demography                    & ---     & ---     & $+0.64$ & $+0.95$\\
    $\gamma_a:-0.20\to0.00$ & Knowledge elasticity          & $+1.05$ & $+0.11$ & $+0.28$ & $+0.04$\\
    \bottomrule
  \end{tabularx}

  \vspace{2pt}
  {\footnotesize\begin{minipage}{\textwidth}
  \emph{Notes.} All experiments are permanent and introduced at $t=20$. The short-run column is the largest deviation from the baseline growth rate during
  the first ten periods after the intervention; the long-run column is the difference between the balanced-growth rates of Theorem~\ref{theorem1}. For
  $G_n$ the deviation rises monotonically towards its long-run value, so there is no peak to report; for $\gamma_a$ the deviation overshoots, reaching
  $+1.05$ on impact before settling at $+0.28$. The $\lambda_y$ entry for $\Delta  g_Y$ is a one-period level effect: moving $10\%$ of the labor force out of   production lowers output once by $6.8\%$, and from the next period on the   deviation stays below $0.15$. Decreases mirror the increases reported here, and temporary versions leave the long-run columns unchanged.
  \end{minipage}}
\end{table}

The experiments confirm Theorem~\ref{theorem1} one by one. Raising research efficiency $\mu_a$, the R\&D share of savings $\gamma$, the saving rate $s$ or
the research share of labor $1-\lambda_y$ moves both growth rates on impact, sometimes by a lot, but every path returns to the baseline rates: these
parameters shift the transition and the steady state levels $(A_*,K_*,Y_*)$, not long-run growth. Two of them show the short-run trade-off: moving savings
into R\&D ($\gamma$) or labor into research ($\lambda_y$) raises $g_{A,t}$ and lowers $g_{Y,t}$, because physical investment and production labor fall. Only two experiments move the asymptotes. Population growth $G_n$ raises both rates permanently, and the knowledge elasticity $\gamma_a$ enters long-run growth
through $\phi_a=(1-\gamma_a)(1-\alpha_k)-\alpha_a\gamma_h$ and the stability conditions of Section~\ref{section-instability} through
$\Gamma=\gamma_a+\alpha_a\gamma_h$. Panel~(c) contrasts a permanent and a temporary increase in $\gamma$: the temporary one gives a hump in $g_{A,t}$ inside the window, a mirror dip after it, and no lasting effect.

\subsection{Innovation cycles}\label{subsec:cycle-sim}
We illustrate Proposition~\ref{cycle} under $\gamma_h=0, G_n=1.1, \gamma_\ell=0.4,   \delta_a=0.5.$ Using the normalized variable $y_t\equiv \hat A_t/A_*$, the knowledge dynamics
become $y_{t+1}=a_a y_t+(1-a_a)y_t^{\gamma_a}.$

For these parameters, the steady state loses stability at
$\gamma_a\approx-2.96$, corresponding to condition~\eqref{gamma_acondition}.
Figure~\ref{fig:cycles}(a) compares convergence at $\gamma_a=-2$ with the
two-period cycle obtained at $\gamma_a=-3.5$. The stronger sufficient
condition~\eqref{key_gamma_a} for a three-period cycle holds from approximately
$\gamma_a=-6.64$; panel~(b) illustrates such a cycle at $\gamma_a=-7$.
Thus, as the negative effect of existing knowledge on idea production becomes
sufficiently strong, convergence to the BGP may give way to endogenous
innovation cycles.

The numerical thresholds also illustrate the interaction between the knowledge
elasticity and knowledge depreciation. For example, condition
\eqref{gamma_acondition} holds at $\delta_a=0.5$ when
$\gamma_a\lesssim-2.96$, whereas with the baseline
$\delta_a=0.02$ it requires a much more negative value of $\gamma_a$.
Accordingly, the cyclical configurations considered here should be viewed as
illustrations of fast-obsolescence environments rather than as a calibration of
aggregate knowledge dynamics.

\begin{figure}[H]
  \centering
  \includegraphics[width=0.9\linewidth]{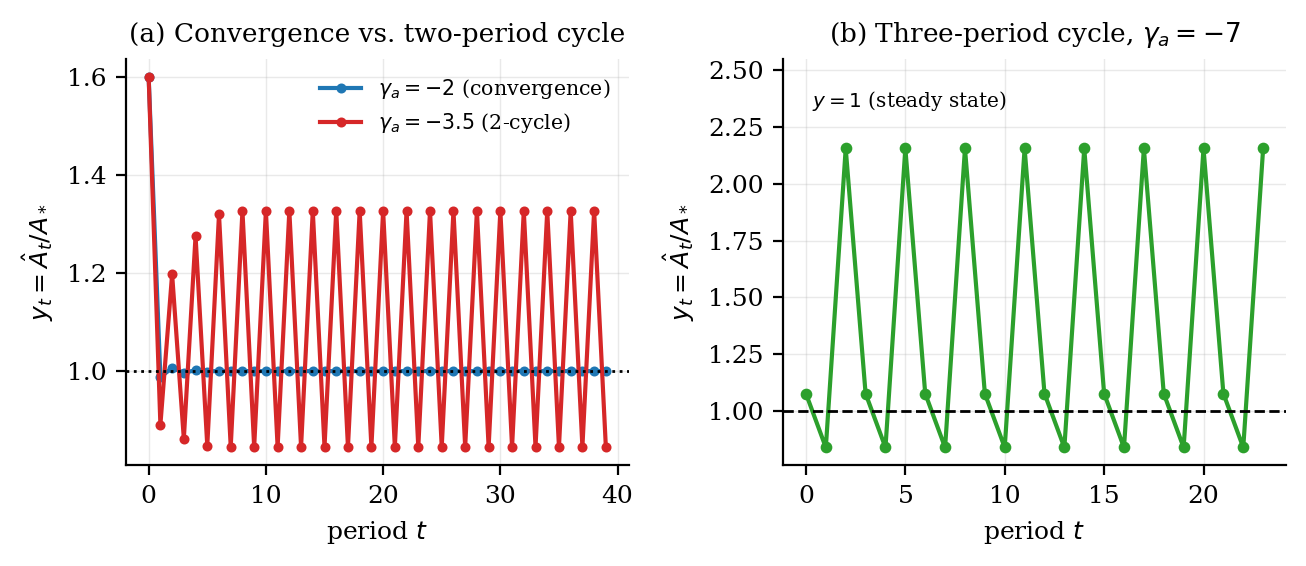}
  \caption{{Innovation cycles (Proposition \ref{cycle}; $\gamma_h=0$,
  $G_n=1.1$, $\gamma_\ell=0.4$, $\delta_a=0.5$). 
  (a) Convergence ($\gamma_a=-2$) versus two-period cycle ($\gamma_a=-3.5$).
  (b) A three-period cycle at $\gamma_a=-7$.}}
  \label{fig:cycles}
\end{figure}

\begin{remark}[Cycles with $\gamma_h>0$]
\label{remark:positive-rd-cycles}
The restriction $\gamma_h=0$ in Proposition~\ref{cycle} is imposed for analytical tractability, but is not necessary for the existence of a two-period cycle. For example, retaining the parameters of Table~\ref{tab:baseline-params}, except for
$\gamma_h=0.4, \delta_a=0.5, G_n=1.1, \gamma_a=-3.168718$, the detrended system has a locally attracting two-period cycle. More precisely,
along this cycle,
$$
\Big(
\frac{\hat K_{2t}}{K_*},
\frac{\hat A_{2t}}{A_*}
\Big)
\simeq
(1.002588,\,1.193844), 
\Big(
\frac{\hat K_{2t+1}}{K_*},
\frac{\hat A_{2t+1}}{A_*}
\Big)
\simeq
(1.004624,\,0.878954),
\text{ }\forall t\geq 0.
$$

Thus, normalized capital and knowledge alternate between two states and return to the same state after two periods. 
This example shows that the cyclical mechanism can persist when $\gamma_h>0$.
\end{remark}

\subsection{Summary of findings}\label{subsec:summary-sim}
Three messages emerge. 
\begin{enumerate}
\item
\emph{Growth is semi-endogenous}: along the BGP
$G_a=G_n^{\psi_a}$ and $G_y=G_k=G_n^{\psi_k}$, so the parameters that raise the flow of research input ($\mu_a$, $s$, $\gamma$, $1-\lambda_y$) move the transition and the steady-state levels but not the long-run growth rates, as the first four rows of Table~\ref{tab:policy} confirm; R\&D-oriented reallocations buy technology growth at the cost of a temporary loss of output growth. 
\item\emph{Of the parameters we shock, only two move long-run growth}: demography, with $G_n\!\to\!1$ driving both rates to zero, and  {the knowledge elasticity, which acts through two distinct combinations: $\phi_a=(1-\gamma_a)(1-\alpha_k)-\alpha_a\gamma_h$ in the growth elasticities of Theorem \ref{theorem1}, and the total elasticity $\Gamma=\gamma_a+\alpha_a\gamma_h$ in the stability conditions of Section \ref{section-instability}, where $\phi_a$ itself reappears as (\ref{differencepositive})}. 
\item\emph{The stability margin is finite}: {the baseline $\Gamma=-0.16$ lies inside the region of Proposition~\ref{asymptotically-stable}, whereas global convergence (Theorem~\ref{global_stability}) would require $\Gamma\geq0$;} a sufficiently negative $\Gamma$ destroys stability altogether and produces the cyclical dynamics of Section~\ref{subsec:cycle-sim}, which is why policies that reduce ``idea frictions'' matter.
\end{enumerate}
\section{Conclusion and discussion}\label{conclusion}

We have developed a Solow-type semi-endogenous growth model in which technological change is generated by an R\&D sector and the existing stock of knowledge may either facilitate or hinder further innovation. We characterize the existence and uniqueness of a balanced growth path (BGP), derive its long-run growth rates, and establish conditions for global and local stability. In the semi-endogenous case, long-run technological and output growth are ultimately driven by population growth, whereas parameters affecting saving and research intensity primarily determine long-run levels.

A central contribution of the paper is to show that the innovation process itself can generate endogenous instability. When the total elasticity of new ideas with respect to knowledge,  $\Gamma\equiv\gamma_a+\alpha_a\gamma_h,$
is sufficiently negative, the BGP becomes unstable and adjustment is oscillatory. Intuitively, a high stock of knowledge can make additional ideas sufficiently difficult to produce that innovation falls sharply; the subsequent decline in knowledge then makes innovation easier again, generating overshooting in the opposite direction. Under stronger conditions, this mechanism generates deterministic cycles, including cycles of arbitrary period. These fluctuations require neither aggregate shocks nor changes in expectations. They therefore provide a distinct mechanism through which the declining research productivity emphasized by \citet{Bloom2020} can affect the dynamics of innovation and growth.

Our results also clarify the robustness of this mechanism. The Cobb--Douglas specification allows us to obtain explicit characterizations of balanced growth, comparative statics, instability, and cycles without imposing particular numerical values on the elasticities. Moreover, Theorem~\ref{global_stability_general} shows that the global-stability result extends beyond the Cobb--Douglas benchmark to a broader class of production and innovation technologies. By contrast, our explicit instability and cycle conditions exploit the power-function structure, and their extension to general technologies remains an open question.

This observation suggests several directions for future research. In particular, it would be useful to introduce more flexible technologies, such as CES specifications, to study how the substitutability or complementarity between physical capital, labor, and knowledge affects balanced growth and stability. This issue is especially relevant with the rapid development of artificial intelligence (AI), which may substitute for some tasks while complementing workers and accumulated knowledge \citep{AcemogluRestrepo2019,Acemoglu2025}. Recent evidence also points to heterogeneous productivity effects of generative AI across workers \citep{BrynjolfssonLiRaymond2025}. Allowing for non-unit elasticities of substitution would therefore help assess whether AI-related substitution dampens innovation-driven fluctuations or whether complementarities between AI, human researchers, and knowledge amplify them.

\appendix

\section{Appendix: formal proofs}\label{appendix1}

\begin{proof}[{\bf Proof of Lemma \ref{lemma1}}]
First, by the capital accumulation equation, we have $K_{t+1} = (1 - \delta_k) K_t + (1 - \gamma) s Y_t.$ Dividing both sides by $K_t$, we obtain
\begin{align}\label{Gk_Y}
    G_k = (1 - \delta_k) + (1 - \gamma) s \frac{Y_*}{K_*} \left( \frac{G_y}{G_k} \right)^t.
\end{align}

This implies immediately that $G_k - (1 - \delta_k) > 0$.  
We now claim that $G_y = G_k$. To do so, we consider two cases.
\begin{enumerate}
 \item Suppose that $G_y < G_k$. Then $\frac{G_y}{G_k} < 1$ and thus $\left(\frac{G_y}{G_k}\right)^t \to 0$ as $t \to \infty$.  
This would imply $G_k = (1 - \delta_k)$, a contradiction.

\item  Suppose that $G_y > G_k$. Then $\frac{G_y}{G_k} > 1$ and $\left(\frac{G_y}{G_k}\right)^t \to \infty$.  
This would imply $G_k - (1 - \delta_k) \to \infty$, also a contradiction.
\end{enumerate}
By combining two cases, we obtain $G_y = G_k$.

We now observe that
\begin{align*}
K_{t+1} &= (1 - \delta_k) K_t + (1 - \gamma) s A_t^{\alpha_a} K_t^{\alpha_k} L_{yt}^{\alpha_\ell}= (1 - \delta_k) K_t + (1 - \gamma) s \lambda_y^{\alpha_\ell} A_t^{\alpha_a} K_t^{\alpha_k} L_t^{\alpha_\ell}.
\end{align*}

Dividing both sides by $K_t$, we get
\begin{align}\label{Gk_intermediate}
G_k = (1 - \delta_k) + (1 - \gamma) s \lambda_y^{\alpha_\ell} A_*^{\alpha_a} L_0^{\alpha_\ell} K_*^{\alpha_k - 1}
\left( \frac{G_a^{\alpha_a} G_n^{\alpha_\ell}}{G_k^{1 - \alpha_k}} \right)^t.
\end{align}

As above, consistency of growth requires ${G_a^{\alpha_a} G_n^{\alpha_\ell}}={G_k^{1 - \alpha_k}}$.

We now prove \eqref{gan}. We have
\begin{align}
A_{t+1} &= (1 - \delta_a) A_t + \mu_a A_t^{\gamma_a} H_{t+1}^{\gamma_h} L_{a,t+1}^{\gamma_\ell}= (1 - \delta_a) A_t + \mu_a (\gamma s)^{\gamma_h} A_t^{\gamma_a} Y_t^{\gamma_h} L_{a,t+1}^{\gamma_\ell}, \notag\\
        &= (1 - \delta_a) A_t + \mu_a (\gamma s)^{\gamma_h} A_t^{\gamma_a + \alpha_a \gamma_h} 
           K_t^{\alpha_k \gamma_h} L_{yt}^{\alpha_\ell \gamma_h} L_{a,t+1}^{\gamma_\ell}. \label{29}
\end{align}
Note that  $L_{yt}^{\alpha_\ell \gamma_h} L_{a,t+1}^{\gamma_\ell} 
= (\lambda_y L_t)^{\alpha_\ell \gamma_h} (\lambda_a L_t G_n)^{\gamma_\ell} = \lambda_y^{\alpha_\ell \gamma_h} \lambda_a^{\gamma_\ell} G_n^{\gamma_\ell} L_t^{\alpha_\ell \gamma_h + \gamma_\ell}.$

Substituting into (\ref{29}), we have
\begin{align}
A_{t+1} &= (1 - \delta_a) A_t 
+ \mu_a (\gamma s)^{\gamma_h} \lambda_y^{\alpha_\ell \gamma_h} \lambda_a^{\gamma_\ell} G_n^{\gamma_\ell} 
  A_t^{\gamma_a + \alpha_a \gamma_h} K_t^{\alpha_k \gamma_h} 
  L_t^{\alpha_\ell \gamma_h + \gamma_\ell}.
\end{align}

Divide both sides by $A_t$:
\begin{equation*}
G_a = (1 - \delta_a) + \mu_a (\gamma s)^{\gamma_h} \lambda_y^{\alpha_\ell \gamma_h} \lambda_a^{\gamma_\ell} G_n^{\gamma_\ell} 
A_*^{\gamma_a + \alpha_a \gamma_h - 1} K_*^{\alpha_k \gamma_h} L_0^{\alpha_\ell \gamma_h + \gamma_\ell} 
\left( G_a^{\gamma_a + \alpha_a \gamma_h - 1} G_k^{\alpha_k \gamma_h} G_n^{\alpha_\ell \gamma_h + \gamma_\ell} \right)^t. 
\end{equation*}
By using the same argument used above, we get  that $G_a^{\gamma_a + \alpha_a \gamma_h - 1} \, G_k^{\alpha_k \gamma_h} \, G_n^{\alpha_\ell \gamma_h + \gamma_\ell} = 1.$

By combining with ${G_a^{\alpha_a} G_n^{\alpha_\ell}}={G_k^{1 - \alpha_k}}$, we obtain
\begin{equation}
G_k = G_n^{\frac{\alpha_\ell}{1 - \alpha_k}} G_a^{\frac{\alpha_a}{1 - \alpha_k}}.
\end{equation}
Finally, we have
\begin{subequations}
\begin{align}
1 &= G_a^{\gamma_a + \alpha_a \gamma_h - 1} G_k^{\alpha_k \gamma_h} G_n^{\alpha_\ell \gamma_h + \gamma_\ell}= G_a^{\gamma_a + \alpha_a \gamma_h - 1} \left( G_n^{\frac{\alpha_\ell}{1 - \alpha_k}} G_a^{\frac{\alpha_a}{1 - \alpha_k}} \right)^{\alpha_k \gamma_h} G_n^{\alpha_\ell \gamma_h + \gamma_\ell}  \\
&= G_a^{\gamma_a + \alpha_a \gamma_h - 1 + \frac{\alpha_a}{1 - \alpha_k} \alpha_k \gamma_h} G_n^{\frac{\alpha_\ell}{1 - \alpha_k} \alpha_k \gamma_h + \alpha_\ell \gamma_h + \gamma_\ell}.
\end{align}
\end{subequations}
Observe that 
\begin{subequations}
\begin{align}
\gamma_a + \alpha_a \gamma_h - 1 + \frac{\alpha_a}{1 - \alpha_k} \alpha_k \gamma_h&=\frac{\alpha_a\gamma_h-(1-\alpha_k)(1-\gamma_a)}{1-\alpha_k},\\
        \frac{\alpha_\ell}{1 - \alpha_k} \alpha_k \gamma_h + \alpha_\ell \gamma_h + \gamma_\ell&=\frac{\alpha_{\ell}\gamma_h+\gamma_{\ell}(1-\alpha_k)}{1-\alpha_k}.
\end{align}\end{subequations}
So, we obtain \eqref{gan}.

{\bf Necessary condition}. We show that a BGP must be this sequence. The growth rates of a BGP must be determined as in Lemma \ref{lemma1}.

We now find the steady state values \( K_*, A_* \). We should find two equations. According to the proof of Lemma \ref{lemma1}, we have
\begin{subequations}
\begin{align}
G_k &= (1 - \delta_k) + (1 - \gamma)s \lambda_y^{\alpha_\ell} A_*^{\alpha_a} L_0^{\alpha_\ell} K_*^{\alpha_k - 1},\\
G_a &= (1 - \delta_a) + \mu_a (\gamma s)^{\gamma_h} (\lambda_y)^{\alpha_\ell \gamma_h} (\lambda_a G_n)^{\gamma_\ell} A_*^{\gamma_a + \alpha_a \gamma_h - 1} K_*^{\alpha_k \gamma_h} L_0^{\alpha_\ell \gamma_h + \gamma_\ell}.
\end{align}
\end{subequations}

Since \( X_k \equiv (1 - \gamma)s \lambda_y^{\alpha_\ell} L_0^{\alpha_\ell} \) and \( X_a \equiv \mu_a (\gamma s)^{\gamma_h} (\lambda_y)^{\alpha_\ell \gamma_h} (\lambda_a G_n)^{\gamma_\ell} L_0^{\alpha_\ell \gamma_h + \gamma_\ell} \), we have
\begin{equation}
G_k = (1 - \delta_k) + X_k A_*^{\alpha_a} K_*^{\alpha_k - 1}
 \text{ and } G_a = (1 - \delta_a) + X_a A_*^{\gamma_a + \alpha_a \gamma_h - 1} K_*^{\alpha_k \gamma_h}.
\end{equation}

We compute \( K_* \) using \( A_* \):
\[
K_* = \left( \frac{X_k A_*^{\alpha_a}}{G_k - 1 + \delta_k} \right)^{\frac{1}{1 - \alpha_k}}.
\]

By substituting this in the second equation, we get
\begin{subequations}
\begin{align*}
    G_a - 1 + \delta_a = X_a \left( \frac{X_k}{G_k - 1 + \delta_k} \right)^{\frac{\alpha_k \gamma_h}{1 - \alpha_k}} A_*^{\gamma_a + \alpha_a \gamma_h - 1 + \frac{\alpha_a}{1 - \alpha_k} \alpha_k \gamma_h}\\
\Rightarrow A_*^{1 - \gamma_a - \alpha_a \gamma_h - \frac{\alpha_a}{1 - \alpha_k} \alpha_k \gamma_h} = \frac{1}{G_a - 1 + \delta_a} X_a \left( \frac{X_k}{G_k - 1 + \delta_k} \right)^{\frac{\alpha_k \gamma_h}{1 - \alpha_k}}.
\end{align*}
\end{subequations}

We finally compute \( Y_* \). By \eqref{Gk_Y}, we have $(G_k - 1 + \delta_k) K_* = (1 - \gamma)s Y_*$, which implies that
$$Y_* = \frac{(G_k - 1 + \delta_k) K_*}{(1 - \gamma)s}.$$

\end{proof}

\subsection{Proofs for Section \ref{section-stability}}
\label{section-stability-proofs}
\begin{proof}[{\bf Proof of Lemma \ref{lemma-2dim-system}}]

We have two accumulations:\begin{subequations}
\begin{align}
    K_{t+1} &= (1 - \delta_k)K_t + (1 - \gamma)s\lambda_y^{\alpha_{\ell}}A_t^{\alpha_a}K_t^{\alpha_k}L_t^{\alpha_\ell} \\
    A_{t+1} &= (1 - \delta_a)A_t + \mu_a(\gamma s)^{\gamma_h}(\lambda_y)^{\alpha_\ell \gamma_h}(\lambda_a G_n)^{\gamma_\ell}(A_t^{\gamma_a + \alpha_a \gamma_h}K_t^{\alpha_k \gamma_h}L_t^{\alpha_\ell \gamma_h + \gamma_\ell})
\end{align}\end{subequations}

From definitions of $\hat{K}_t$, $\hat{A}_t$, we have:
\begin{align*}
    G_k^{t+1} \hat{K}_{t+1} &= (1 - \delta_k)G_k^t \hat{K}_t + (1 - \gamma)s\lambda_y^{\alpha_{\ell}}(G_a^t \hat{A}_t)^{\alpha_a}(G_k^t \hat{K}_t)^{\alpha_k}(G_n^t L_0)^{\alpha_\ell} \notag \\
    G_a^{t+1} \hat{A}_{t+1} &= (1 - \delta_a)G_a^t \hat{A}_t + \mu_a(\gamma s)^{\gamma_h}(\lambda_y)^{\alpha_\ell \gamma_h}(\lambda_a G_n)^{\gamma_\ell}(G_a^t \hat{A}_t)^{\gamma_a + \alpha_a \gamma_h}(G_k^t \hat{K}_t)^{\alpha_k \gamma_h}(G_n^t L_0)^{\alpha_\ell \gamma_h + \gamma_\ell}
\end{align*}

By combining these equations with the balanced-growth conditions \eqref{Gank} and \eqref{gan}, we get that:\begin{subequations}
\begin{align}
    G_k \hat{K}_{t+1} &= (1 - \delta_k) \hat{K}_t + (1 - \gamma)s\lambda_y^{\alpha_{\ell}} \hat{A}_t^{\alpha_a} \hat{K}_t^{\alpha_k} L_0^{\alpha_\ell} \\
    G_a \hat{A}_{t+1} &= (1 - \delta_a) \hat{A}_t + \mu_a (\gamma s)^{\gamma_h}(\lambda_y)^{\alpha_\ell \gamma_h}(\lambda_a G_n)^{\gamma_\ell} \hat{A}_t^{\gamma_a + \alpha_a \gamma_h} \hat{K}_t^{\alpha_k \gamma_h} L_0^{\alpha_\ell \gamma_h + \gamma_\ell} 
\end{align}\end{subequations}
Recall that $
    X_k \equiv (1 - \gamma)s\lambda_y^{\alpha_{\ell}}L_0^{\alpha_\ell}$ and $ X_a \equiv \mu_a(\gamma s)^{\gamma_h}(\lambda_y)^{\alpha_\ell \gamma_h}(\lambda_a G_n)^{\gamma_\ell}L_0^{\alpha_\ell \gamma_h + \gamma_\ell}$. Then, we obtain the system (\ref{system-tilde}) 
\end{proof}

\begin{proof}[{\bf Proof of Lemma \ref{convergence_general}}]
Let $x_0\in \rr_{++}^n$ and consider $(x_t)_t$ determined by $x_{t+1}=f(x_t)$. 

Define 
\begin{align}
    \lambda_t&\equiv \max\{\frac{x_{1,t}}{x^*_1}, \ldots, \frac{x_{n,t}}{x^*_n}, 1\},&  \theta_t\equiv \min\{\frac{x_{1,t}}{x^*_1}, \ldots, \frac{x_{n,t}}{x^*_n}, 1\}.
\end{align}
We have $\theta_tx^*_i\leq x_{i,t}\leq \lambda_t x^*_i \text{ and }\lambda_t\geq 1\geq \theta_t \text{ } \forall i,\forall t.$
Then, we have \begin{align*}
    x_{i,t+1}&=f_i(x_t)\leq f_i(\lambda_t x^*) &\text{ (by order-preserving property)}\\
    &\leq \lambda_t  f_i(x^*) & \text{ (by concavity of $f_i$)}\\
&=\lambda_t  x^*_i.&
\end{align*}
Consequently, $\lambda_{t+1}\leq \lambda_t$ for any $t$.

By using the same argument, we have $\theta_{t+1}\geq \theta_t$ for any $t$. Therefore, both sequences $(\theta_t), (\lambda_t)$ converge. We will prove that they converge to $1$. Denote $\theta\equiv \lim_{t\to\infty}\theta_t$ and $\lambda\equiv \lim_{t\to\infty}\lambda_t$. It is clear that $\lambda\geq 1\geq \theta.$ Suppose that $\lambda>1$. As above, we have $ x_{i,t+1}=f_i(x_t)\leq f_i(\lambda_t x^*)$, and hence:
\begin{align*}
   \frac{x_{i,t+1}}{x^*_i}\leq \frac{f_i(\lambda_t x^*)}{x^*_i} \text{ } \forall i,t.
\end{align*}
This implies that $\lambda_{t+1}\leq \max\{\frac{f_1(\lambda_t x^*)}{x^*_1},\cdots,\frac{f_n(\lambda_t x^*)}{x^*_n},1\}$. Let $t$ tend to infinity, we get that 
\begin{align*}
    \lambda\leq \max\{\frac{f_1(\lambda x^*)}{x^*_1},\cdots,\frac{f_n(\lambda x^*)}{x^*_n},1\}.
\end{align*}
Since $f_i$ is strictly concave and $\lambda>1$, we have $f_i(\lambda x^*)<\lambda f_i(x^*)=\lambda x^*_i$, which implies that $\frac{f_i(\lambda x^*)}{x^*_i}<\lambda$ for any $i$. So, $$\lambda\leq\max\{\frac{f_1(\lambda x^*)}{x^*_1},\cdots,\frac{f_n(\lambda x^*)}{x^*_n},1\}<\lambda,$$
which is a contradiction.

Therefore, we have $\lambda=1$. By using the same argument, we have $\theta=1$.

Recall that $\theta_tx^*_i\leq x_{i,t}\leq \lambda_t x^*_i$. Let $t$ tend to infinity, we obtain that $\lim_{t\to\infty}x_{i,t}=x^*_i$ for any $i$.
\end{proof}
\begin{proof}[{\bf Proof of Proposition \ref{monotone-remark}}]
We need the following lemma.
\begin{lemma}\label{induction_t} Assume that $a_i\in (0,1), \gamma_a+\alpha_a\gamma_h\in (0,1)$.

 If
\begin{align}
\label{system-tilde-datet}
a_k\hat{K}_t+b_k\hat{A}_t^{\alpha_a}\hat{K}_t^{\alpha_k}\geq \hat{K}_t \text{ and } a_a\hat{A}_t+b_a\hat{A}_t^{\gamma_a+\alpha_a\gamma_h}\hat{K}_t^{\alpha_k\gamma_h}\geq \hat{A}_t,
\end{align}
then 
\begin{subequations}
\label{system-tilde-datet+1}
 \begin{align}\label{system-tilde-datet+1_k}
a_k\hat{K}_{t+1}+b_k\hat{A}_{t+1}^{\alpha_a}\hat{K}_{t+1}^{\alpha_k}\geq\hat{K}_{t+1}\\
\label{system-tilde-datet+1_a}a_a\hat{A}_{t+1}+b_a\hat{A}_{t+1}^{\gamma_a+\alpha_a\gamma_h}\hat{K}_{t+1}^{\alpha_k\gamma_h}\geq\hat{A}_{t+1}.
\end{align}
\end{subequations}
\end{lemma}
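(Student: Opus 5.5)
The plan is to exploit the order-preserving (monotone) structure of the map $f=(f_1,f_2)$ defined in \eqref{abi_definition}. Under the standing assumptions $a_i\in(0,1)$ and $\Gamma\equiv\gamma_a+\alpha_a\gamma_h\in(0,1)$, both $f_1$ and $f_2$ are nondecreasing in each argument on $\rr_{++}^2$. This uses $\alpha_a,\alpha_k,\gamma_h\ge 0$, $b_i>0$, and $\Gamma>0$. Hypothesis \eqref{system-tilde-datet} says exactly that $f_1(\hat K_t,\hat A_t)\ge \hat K_t$ and $f_2(\hat K_t,\hat A_t)\ge \hat A_t$. Since $\hat K_{t+1}=f_1(\hat K_t,\hat A_t)$ and $\hat A_{t+1}=f_2(\hat K_t,\hat A_t)$, this is the componentwise inequality $x_{t+1}\ge x_t$, where $x_t\equiv(\hat K_t,\hat A_t)$.

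The key step is to apply monotonicity of $f$ to $x_{t+1}\ge x_t$:
\begin{align*}
f_1(\hat K_{t+1},\hat A_{t+1})\ge f_1(\hat K_t,\hat A_t)=\hat K_{t+1},\qquad f_2(\hat K_{t+1},\hat A_{t+1})\ge f_2(\hat K_t,\hat A_t)=\hat A_{t+1}.
\end{align*}
These are precisely \eqref{system-tilde-datet+1_k} and \eqref{system-tilde-datet+1_a}. For each component, I will vary one argument at a time. For example, I write
\begin{align*}
f_1(\hat K_{t+1},\hat A_{t+1})\ge f_1(\hat K_{t+1},\hat A_t)\ge f_1(\hat K_t,\hat A_t),
\end{align*}
using that $A\mapsto b_kA^{\alpha_a}K^{\alpha_k}$ is nondecreasing and that $K\mapsto a_kK+b_kA^{\alpha_a}K^{\alpha_k}$ is increasing. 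The same two-step argument applies to $f_2$, where $A\mapsto a_aA+b_aA^{\Gamma}K^{\alpha_k\gamma_h}$ is increasing because $\Gamma>0$, and $K\mapsto K^{\alpha_k\gamma_h}$ is nondecreasing.

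The only point requiring care, and the one I regard as the main (mild) obstacle, is checking that every exponent is nonnegative, so that monotonicity really holds. This is exactly where the assumption $\Gamma\in(0,1)$ enters: if $\Gamma<0$, then $f_2$ is not increasing in $A$ and the induction breaks, consistent with the footnote in Section~\ref{subsec:baseline-sim}. Positivity of all iterates, needed so that the powers are well defined, follows since $a_i,b_i>0$ and $\hat K_0,\hat A_0>0$.

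The reverse-inequality version (all inequalities flipped) follows verbatim by the same argument. It will then be used, together with this lemma and induction on $t$ starting from \eqref{system-tilde-date0} or \eqref{system-tilde-date1}, to conclude Proposition~\ref{monotone-remark}. Note that \eqref{system-tilde-date0} is equivalent to \eqref{system-tilde-datet} at $t=0$ after dividing by $\hat K_0$ and $\hat A_0$.
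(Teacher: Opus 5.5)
Your proof is correct, but it takes a different route from the paper's. The paper does not use the order-preserving property here; its argument is computational:
\begin{itemize}
\item It rewrites \eqref{system-tilde-datet} as the ratio conditions $b_k\hat A_t^{\alpha_a}\hat K_t^{\alpha_k-1}\ge 1-a_k$ and $b_a\hat A_t^{\Gamma-1}\hat K_t^{\alpha_k\gamma_h}\ge 1-a_a$.
\item It substitutes the laws of motion, so that the time-$(t+1)$ ratio factors as the time-$t$ ratio times $\bigl(a_a+b_a\hat A_t^{\Gamma-1}\hat K_t^{\alpha_k\gamma_h}\bigr)^{\alpha_a}\bigl(a_k+b_k\hat A_t^{\alpha_a}\hat K_t^{\alpha_k-1}\bigr)^{\alpha_k-1}$.
\item It bounds the cross factor below by $1$ using the other hypothesis.
\item It closes with the scalar inequality $x\ge(1-a_k)(a_k+x)^{1-\alpha_k}$ for $x\ge 1-a_k$, proved by differentiating an auxiliary function. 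The $\hat A$-inequality is done ``similarly,'' invoking $\Gamma<1$.
\end{itemize}

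You instead observe that the hypothesis is simply $x_{t+1}\ge x_t$. Monotonicity of $f$ then gives $x_{t+2}=f(x_{t+1})\ge f(x_t)=x_{t+1}$. This replaces the whole computation with the standard fact that a monotone map started at a point with $f(x)\ge x$ has a nondecreasing orbit. It also reuses the order-preserving property the paper already relies on in Theorem~\ref{global_stability}.

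What your route buys is brevity and a sharper view of the hypotheses. The assumptions $a_i<1$ and $\Gamma<1$ play no role; only nonnegativity of the coefficients and exponents, in particular $\Gamma\ge 0$, is used. The paper's route works directly with the ratio form in which Proposition~\ref{monotone-remark} is stated, at the cost of length.

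For the strict version in part~2 of Proposition~\ref{monotone-remark}, ``verbatim'' needs one qualification. Strict inequalities propagate because each $f_i$ is \emph{strictly} increasing in its own argument (as $a_i>0$), not merely nondecreasing.
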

\begin{proof}
Condition (\ref{system-tilde-datet}) is equivalent to 
 \begin{align}\label{system-tilde-datet_1}
b_k\hat{A}_t^{\alpha_a}\hat{K}_t^{\alpha_k-1}\geq1-a_k, \quad 
b_a\hat{A}_t^{\gamma_a+\alpha_a\gamma_h-1}\hat{K}_t^{\alpha_k\gamma_h}\geq1-a_a.
\end{align}
while conditions (\ref{system-tilde-datet+1}) become  
 \begin{align}\label{system-tilde-datet+1_1}
b_k\hat{A}_{t+1}^{\alpha_a}\hat{K}_{t+1}^{\alpha_k-1}\geq1-a_k, \quad 
b_a\hat{A}_{t+1}^{\gamma_a+\alpha_a\gamma_h-1}\hat{K}_{t+1}^{\alpha_k\gamma_h}\geq1-a_a.
\end{align}
According to (\ref{system-tildeK}) and (\ref{system-tildeA}), we have 
\begin{subequations} \begin{align}
b_k\hat{A}_{t+1}^{\alpha_a}\hat{K}_{t+1}^{\alpha_k-1}
&=b_k\Big(a_a\hat{A}_t+b_a\hat{A}_t^{\gamma_a+\alpha_a\gamma_h}\hat{K}_t^{\alpha_k\gamma_h}\Big)^{\alpha_a}\Big(a_k\hat{K}_t+b_k\hat{A}_t^{\alpha_a}\hat{K}_t^{\alpha_k}\Big)^{\alpha_k-1}\\
&=\frac{b_k\hat{A}_t^{\alpha_a}}{\hat{K}_t^{1-\alpha_k}}\Big(a_a+b_a\hat{A}_t^{\gamma_a+\alpha_a\gamma_h-1}\hat{K}_t^{\alpha_k\gamma_h}\Big)^{\alpha_a}\Big(a_k+b_k\hat{A}_t^{\alpha_a}\hat{K}_t^{\alpha_k-1})^{\alpha_k-1}.
\end{align}
\end{subequations}
By the second inequality in (\ref{system-tilde-datet_1}),  we have  $
a_a+b_a\hat{A}_t^{\gamma_a+\alpha_a\gamma_h-1}\hat{K}_t^{\alpha_k\gamma_h}\geq a_a+1-a_a=1.$ So, it remains to prove that 
\begin{align}
\label{remain1}\frac{b_k\hat{A}_t^{\alpha_a}}{\hat{K}_t^{1-\alpha_k}}\Big(a_k+b_k\hat{A}_t^{\alpha_a}\hat{K}_t^{\alpha_k-1})^{\alpha_k-1}\geq1-a_k.
\end{align}

Consider the function  $f(x)\equiv x-(1-a_k)(a_k+x)^{1-\alpha_k}$. 
We have $f'(x)=1-(1-\alpha_k)(1-a_k)(a_k+x)^{-\alpha_k}\geq 0$ for all $x\geq 1-a_k$. So, $f(x)\geq f(1-a_k)=1-a_k-(1-a_k)=0$ for all $x\geq 1-a_k$.

 According to the first inequality in (\ref{system-tilde-datet_1}), we have $\frac{b_k\hat{A}_t^{\alpha_a}}{\hat{K}_t^{1-\alpha_k}}\geq 1-a_k$. Therefore, $f\Big(\frac{b_k\hat{A}_t^{\alpha_a}}{\hat{K}_t^{1-\alpha_k}}\Big)\geq 0$. This means that 
 \begin{align}
 \frac{b_k\hat{A}_t^{\alpha_a}}{\hat{K}_t^{1-\alpha_k}}\geq (1-a_k)\Big(a_k+\frac{b_k\hat{A}_t^{\alpha_a}}{\hat{K}_t^{1-\alpha_k}}\Big)^{1-\alpha_k}.
 \end{align}
which is indeed (\ref{remain1}). We have just proved (\ref{system-tilde-datet+1_k}). By using the similar argument and the assumption $1>\gamma_a+\alpha_a\gamma_h$, we obtain (\ref{system-tilde-datet+1_a}).
\end{proof}
We are now ready to prove Proposition \ref{monotone-remark}.   We prove the first point, as the proof of the second point is analogous.  Since (\ref{system-tilde-date0}), we have $\hat{K}_1\geq \hat{K}_0$ and $\hat{A}_1\geq \hat{A}_0$. 

Using again (\ref{system-tilde-date0}) and applying Lemma \ref{induction_t}, we have  \begin{align}\label{system-tilde-date_1}
b_k\hat{A}_{1}^{\alpha_a}\hat{K}_{1}^{\alpha_k-1}\geq 1-a_k, \quad 
b_a\hat{A}_{1}^{\gamma_a+\alpha_a\gamma_h-1}\hat{K}_{1}^{\alpha_k\gamma_h}\geq 1-a_a.
\end{align}
This implies that $\hat{K}_2\geq \hat{K}_1$ and $\hat{A}_2\geq \hat{A}_1$. 
So, by the induction argument, we get that  $\hat{K}_{t+1}\geq \hat{K}_t$ and $\hat{A}_{t+1}\geq \hat{A}_t$. 

\end{proof}

\begin{proof}[{\bf Proof of Theorem \ref{global_stability_general}}]
We prepare our proof by an intermediate step.
\begin{lemma}\label{lemma_transformation}Define 
$\hat{K}_t=K_t/L_t^{\psi_k}, \quad \hat{A}_t=A_t/L_t^{\psi_a}.$ We have
\begin{subequations}\begin{align}
\label{hatKgeneral}
G_k\hat{K}_{t+1}&=(1-\delta_k)\hat{K}_t+(1-\gamma)s\hat{F}(\hat{A}_t,\hat{K}_t), \\
\label{hatAgeneral}G_a\hat{A}_{t+1}&=(1-\delta_a)\hat{A}_t+\hat{F}_a(\hat{A}_t,\hat{K}_t).
\end{align}\end{subequations}
\end{lemma}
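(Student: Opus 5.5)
The plan is to divide the accumulation equations \eqref{Kgeneral} and \eqref{Ageneral} by $L_{t+1}^{\psi_k}$ and $L_{t+1}^{\psi_a}$ respectively, and then use the homogeneity property of Assumption \ref{FF_a}, point \ref{FF_a2}, with the choice $\lambda=L_t$, to pull the population terms out of $F$ and $F_a$. Throughout, set $G_k\equiv G_n^{\psi_k}$ and $G_a\equiv G_n^{\psi_a}$. Then $L_{t+1}^{\psi_k}=G_kL_t^{\psi_k}$ and $L_{t+1}^{\psi_a}=G_aL_t^{\psi_a}$, because $L_{t+1}=G_nL_t$.

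\emph{Capital equation.} Write $K_t=L_t^{\psi_k}\hat{K}_t$ and $A_t=L_t^{\psi_a}\hat{A}_t$. Homogeneity with $\lambda=L_t$ gives
\[
F(A_t,K_t,\lambda_yL_t)=F\big(L_t^{\psi_a}\hat{A}_t,L_t^{\psi_k}\hat{K}_t,L_t\lambda_y\big)=L_t^{\psi_k}F(\hat{A}_t,\hat{K}_t,\lambda_y)=L_t^{\psi_k}\hat{F}(\hat{A}_t,\hat{K}_t).
\]
Substituting into \eqref{Kgeneral} yields $G_kL_t^{\psi_k}\hat{K}_{t+1}=(1-\delta_k)L_t^{\psi_k}\hat{K}_t+(1-\gamma)sL_t^{\psi_k}\hat{F}(\hat{A}_t,\hat{K}_t)$. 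Dividing by $L_t^{\psi_k}>0$ gives \eqref{hatKgeneral}.

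\emph{Knowledge equation.} The step that needs care is matching the scaling of $F_a$: its labor input is $\lambda_aL_{t+1}=\lambda_aG_nL_t$, dated $t+1$, while its knowledge input $A_t$ is dated $t$. From the previous step, $H_{t+1}=\gamma sF(A_t,K_t,\lambda_yL_t)=L_t^{\psi_k}\,\gamma s\hat{F}(\hat{A}_t,\hat{K}_t)$. Applying the second homogeneity identity with $\lambda=L_t$ and third argument $\lambda_aG_n$ gives
\[
F_a\big(L_t^{\psi_a}\hat{A}_t,\,L_t^{\psi_k}\gamma s\hat{F}(\hat{A}_t,\hat{K}_t),\,L_t\lambda_aG_n\big)=L_t^{\psi_a}F_a\big(\hat{A}_t,\gamma s\hat{F}(\hat{A}_t,\hat{K}_t),\lambda_aG_n\big)=L_t^{\psi_a}\hat{F}_a(\hat{A}_t,\hat{K}_t).
\]
This is exactly the definition of $\hat{F}_a$, which already builds the factor $G_n$ into the labor argument. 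Substituting into \eqref{Ageneral} and dividing by $L_t^{\psi_a}$ gives \eqref{hatAgeneral}.

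The only potential obstacle is this bookkeeping of the time indices in the knowledge equation; the definition of $\hat{F}_a$ has been arranged so that the $G_n$ factor appears inside $F_a$ and not as a separate growth term, and the computation is otherwise routine. Note also that the detrending here is by $L_t^{\psi}$, not by $G^t$ as in \eqref{hat_variables}. The two differ only by the constants $L_0^{\psi_k}$ and $L_0^{\psi_a}$, which explains the limits $L_0^{\psi_a}\hat{A}_*$ and $L_0^{\psi_k}\hat{K}_*$ in Theorem \ref{global_stability_general}.
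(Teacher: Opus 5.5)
Your proposal is correct and follows essentially the same route as the paper. You apply the homogeneity identities of Assumption \ref{FF_a}.\ref{FF_a2} with $\lambda=L_t$ to write $Y_t=L_t^{\psi_k}\hat{F}(\hat{A}_t,\hat{K}_t)$, $H_{t+1}=\gamma sL_t^{\psi_k}\hat{F}(\hat{A}_t,\hat{K}_t)$ and $F_a(A_t,H_{t+1},\lambda_aL_{t+1})=L_t^{\psi_a}\hat{F}_a(\hat{A}_t,\hat{K}_t)$, then substitute into \eqref{Kgeneral}--\eqref{Ageneral} and divide by $L_t^{\psi_k}$ and $L_t^{\psi_a}$, using $L_{t+1}^{\psi_i}=G_iL_t^{\psi_i}$ with $G_i=G_n^{\psi_i}$ (a step the paper leaves implicit).
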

\begin{proof}    
First, by Assumption \ref{FF_a}.\ref{FF_a2}, we can verify that
\begin{subequations}
\begin{align*}
Y_t&=F(A_t,K_t,\lambda_yL_t)=F(\hat{A}_tL_t^{\psi_a},\hat{K}_tL_t^{\psi_k},\lambda_yL_t)
=L_t^{\psi_k}\hat{F}(\hat{A}_t,\hat{K}_t),\\
H_{t+1}&=\gamma s L_t^{\psi_k}\hat{F}(\hat{A}_t,\hat{K}_t),\\
F_a(A_t, H_{t+1},\lambda_aL_{t+1})&=F_a\Big(\hat{A}_tL_t^{\psi_a}, \gamma s L_t^{\psi_k}\hat{F}(\hat{A}_t,\hat{K}_t),\lambda_aG_nL_{t})=L_t^{\psi_a}\hat{F}_a(\hat{A}_t,\hat{K}_t).
\end{align*}
\end{subequations}
By substituting these equations in \eqref{Kgeneral} and \eqref{Ageneral}, we obtain the system \eqref{hatKgeneral}-\eqref{hatAgeneral}.
\end{proof}
We now prove Theorem \ref{global_stability_general}. Take the vector $(\hat{K}_*,\hat{A}_*)\in \rr_{++}^2$  as in Assumption \ref{FF_a}. We can check that $K_t=L_0^{\psi_k}\hat{K}_*G_k^t$, $A_t=L_0^{\psi_a}\hat{A}_*G_a^t$ are part of a BGP.

Finally, we prove the global stability. Look at the system \eqref{hatKgeneral}-\eqref{hatAgeneral}.   Thanks to Assumption \ref{FF_a} and Lemma \ref{lemma_transformation}, we can check that this system satisfies all assumptions in Lemma \ref{convergence_general}. So, applying Lemma \ref{convergence_general}, we obtain the convergence of $\hat{K}_t,\hat{A}_t$ and  $\frac{A_t}{(G_n^{\psi_a})^t},\frac{K_t}{(G_n^{\psi_k})^t}.$
\end{proof}

\subsection{Proofs for Section \ref{section-instability}}
\label{section-instability-proofs}
\begin{proof}[{\bf Proof of Proposition \ref{unstable}}]

Let us firstly compute the Jacobian matrix $J_*\equiv J(K_*,A_*)$ at the point $(K_*, A_*)$. First, for any $K>0,A>0$, we have
\[
\begin{aligned}
    J_{11}(K, A) &\equiv \frac{\partial f_1(K,A)}{\partial K}= a_k + \alpha_k b_k A^{\alpha_a} K^{\alpha_k - 1} \\
    J_{12}(K, A) &\equiv\frac{\partial f_1(K,A)}{\partial A}= \alpha_a b_k A^{\alpha_a - 1} K^{\alpha_k} \\
    J_{21}(K, A) &\equiv\frac{\partial f_2(K,A)}{\partial K}= b_a \alpha_k \gamma_h A^{\gamma_a + \alpha_a \gamma_h} K^{\alpha_k \gamma_h - 1} \\
    J_{22}(K, A) &\equiv\frac{\partial f_2(K,A)}{\partial A}= a_a + b_a (\gamma_a + \alpha_a \gamma_h) A^{\gamma_a + \alpha_a \gamma_h - 1} K^{\alpha_k \gamma_h}.
\end{aligned}
\]

At the steady state, we have 
\begin{align*}
    K_* = a_k K_* + b_k A_*^{\alpha_a} K_*^{\alpha_k} &\Leftrightarrow (1 - a_k) = b_k A_*^{\alpha_a} K_*^{\alpha_k - 1},\\
    A_* = a_a A_* + b_a A_*^{\gamma_a + \alpha_a \gamma_h} K_*^{\alpha_k \gamma_h} &\Leftrightarrow (1 - a_a) = b_a A_*^{\gamma_a + \alpha_a \gamma_h - 1} K_*^{\alpha_k \gamma_h}.
\end{align*}
Therefore, we find that
\[
\begin{aligned}
    J_{11}(K_*, A_*) &= a_k + \alpha_k (1 - a_k),&    J_{12}(K_*, A_*) &= \alpha_a (1 - a_k) \dfrac{K_*}{A_*} \\
    J_{21}(K_*, A_*) &= \alpha_k \gamma_h (1 - a_a) \dfrac{A_*}{K_*}, &
    J_{22}(K_*, A_*) &= a_a + (\gamma_a + \alpha_a \gamma_h)(1 - a_a).
\end{aligned}
\]

According to Theorem 4.11's point 2 in \citet{Elaydi2007}, the fixed point $(K_*,A_*)$ is unstable if the spectral radius $\rho(J_*)$ of the Jacobian matrix $J_*$ is strictly higher than $1$. By the definition of $\rho(J_*)$, we have 
\begin{align}
    \rho(J_*)=\max(|\lambda_1|, |\lambda_2|)
\end{align}
where $\lambda_1, \lambda_2$ are the eigenvalues of  $J_*$. 

 Let $T=\operatorname{tr}(J_*)$, $D=\det(J_*)$, and
$p(\lambda)=\lambda^2-T\lambda+D$. We see that
\[
\begin{aligned}
    \text{tr}(J(K_*, A_*)) &= J_{11}(K_*, A_*) + J_{22}(K_*, A_*) \\
    &= a_k + \alpha_k (1 - a_k) + a_a + (\gamma_a + \alpha_a \gamma_h)(1 - a_a) \\
    \text{det}(J(K_*, A_*)) &= \big(a_k + \alpha_k(1 - a_k)\big)\big(a_a + (\gamma_a + \alpha_a \gamma_h)(1 - a_a)\big) \\
    &\quad - \alpha_a (1 - a_k) \alpha_k \gamma_h (1 - a_a).
\end{aligned}
\]

\begin{enumerate}
\item Direct calculation gives
\begin{subequations}
\begin{align}
p(1)&=(1-a_k)(1-a_a)\phi_a,\label{eq:p-one}\\
p(-1)&=[1+a_k + \alpha_k (1 - a_k)][1+a_a+(1-a_a)\Gamma]
-\alpha_a\alpha_k\gamma_h(1-a_k)(1-a_a).\label{eq:p-minus-one}
\end{align}\end{subequations}
If $\phi_a<0$, then $p(1)<0$ and $\lim_{\lambda\to +\infty}p(\lambda)=+\infty$. So a real root exceeds one. This implies $\rho(J(K_*,A_*))>1$.

\item 
Observe that $|D|=|\lambda_1| |\lambda_2|$. So, $\rho(J(K_*,A_*))>1$ if  $|D|>1$. Of course, $|D|>1$ if $D<-1$ or $D>1$. Lastly,  we can compute that \begin{align*}
   D=&J_{11}(K_*, A_*)J_{22}(K_*, A_*)-J_{12}(K_*, A_*)J_{21}(K_*, A_*)\\
    =&(a_k + \alpha_k(1 - a_k))(a_a + (\gamma_a + \alpha_a \gamma_h)(1 - a_a)) - \alpha_a (1 - a_k) \alpha_k \gamma_h (1 - a_a).
\end{align*}
So, we have finished our proof.
\end{enumerate}
\end{proof}

\begin{proof}[{\bf Proof of Proposition \ref{asymptotically-stable}}]
Applying Theorems 4.4 and 4.11 in \citet{Elaydi2007}, the steady state $(K_*, A_*)$ is asymptotically stable if $|\textit{tr}(J(K_*, A_*))| - 1 < \textit{det}(J(K_*, A_*)) < 1.$ 
Conditions $\gamma_a + \alpha_a \gamma_h<1$ and  $a_k, a_a \in (0,1)$, $\alpha_k,\alpha_a\in [0,1]$ imply that 
\begin{subequations}
\begin{align}
    \text{det}(J(K_*, A_*)) &= (a_k + \alpha_k(1 - a_k))(a_a + (\gamma_a + \alpha_a \gamma_h)(1 - a_a)) \\
    &\quad - \alpha_a (1 - a_k) \alpha_k \gamma_h (1 - a_a)\\
    &\leq (a_k + \alpha_k(1 - a_k))(a_a + (\gamma_a + \alpha_a \gamma_h)(1 - a_a))<1
\end{align}
\end{subequations}
Under condition (\ref{tracepositive}), we have $\text{tr}(J(K_*, A_*)) > 0$. Moreover, we can check that
\begin{subequations}\begin{align}
   & |\text{tr}(J(K_*, A_*))| - 1 - \text{det}(J(K_*, A_*)) \\
    =& - (1 - a_k)(1 - a_a) ((1 - \alpha_k)(1 - \gamma_a) - \alpha_a \gamma_h) < 0
\end{align}\end{subequations}
because of condition (\ref{differencepositive}). We have finished our proof.

\end{proof}

\begin{proof}[{\bf Proof of Proposition  \ref{cycle}}]
In this proof, to simplify the notations, we write $a,b,\gamma$ instead of $a_a,b_a,\gamma_a$. Recall that $a_a,b_a$ are defined by (\ref{abi_definition}). 
Denote \textcolor{blue}{$g(x)=a x + b x^{\gamma}$}.

{\bf Point \ref{point_2period}.} 
It suffices to prove that there exist $(\hat{A}_0, \hat{A}_1)\in \rr_{++}^2$ such that $      \hat{A}_{1}= g(\hat{A}_0)$ and $ \hat{A}_{0}=g(\hat{A}_1)$ with $ \hat{A}_{1}\not=\hat{A}_{0}.$ 
This happens if there exists $x>0$ such that $x =g^2(x)=a (a x + b x^{\gamma})+ b (a x + bx^{\gamma})^{\gamma}$ and $x\not=g(x
)$. This leads to the following equation
\begin{subequations}
\begin{align}
     (1-a^2)x^{1-\gamma}-ab-b(a  + \frac{b}{x^{1-\gamma}} )^{\gamma}=0\\
    (1-a^2)-a\frac{b}{x^{1-\gamma}}-\frac{b}{x^{1-\gamma}}(a  + \frac{b}{x^{1-\gamma}} )^{\gamma}=0.
\end{align}
\end{subequations}
Denote \textcolor{blue}{$X\equiv a  + \frac{b}{x^{1-\gamma}}$}. When $\gamma<1$, $X$ is strictly decreasing in $x$ and $X$ has the range $(a,\infty)$. 

Then, we have an equation
\begin{align}
 G(X)\equiv    1-a^2-a(X-a)-(X-a)X^{\gamma}=0
\end{align}
Since $X\equiv a  + \frac{b}{x^{1-\gamma}}>a$, we focus on the interval $X\in (a,\infty)$. Notice that $a\in (0,1)$.

Observe that $G(1)=0$. Moreover, $X=1$ if and only if $a  + \frac{b}{x^{1-\gamma}}=1$ (or, equivalently, $x=g(x)$, which is equivalent to $x=(\frac{b}{1-a})^{\frac{1}{1-\gamma}}$). 

We have $G(0)=+\infty, G(a)=1-a^2>0, G(\infty)=-\infty$ because $G(X)=1-a^2-(X-a)(a+X^{\gamma})$.

Let us compute 
\begin{align}
G^{\prime}(X)&=-a-X^{\gamma}-(X-a)\gamma X^{\gamma-1}=-a-(1+\gamma)X^{\gamma}+a\gamma X^{\gamma-1}.
\end{align}
We consider two cases.
\begin{enumerate}
\item If $-1\leq \gamma <0$, then we have $G^{\prime \prime}(X)>0$ and $G^{\prime}(X)<0$ for any $X>0$. So, the equation $G(X)=0$ has a unique solution which is 1. In this case, there is no two-cycle.
\item If $\gamma <-1$. 
 We compute $ G'(1)=-a-1-(1-a)\gamma$.

Assume that $G'(1)>0$, (i.e., $\gamma<-\frac{1+a}{1-a}$). Then there exists $\epsilon\in(0,1)$ such that  $G'(X)>0$ for any $X\in (1-\epsilon,1+\epsilon)$. Consequently, for $X'\in (1,1+\epsilon)$, by the mean value theorem, we have $G(X')-G(1)=G'(\xi)(X'-1)>0$ where $\xi \in (1,X')$. So, $G(X')>0$. Since $G(\infty)=-\infty$, there exists $X_0\in (X',\infty)$ such that $G(X_0)=0$. 

We have $X_0>1>a$ because $X_0>X'>1$. We see that there exists $X_0\not= 1$ such that $X_0>a$ and  $G(X_0)=0$.

Now, we define $\hat{A}_0$ by $X_0=a  + \frac{b}{(\hat{A}_0)^{1-\gamma}}$ and $\hat{A}_1=g(\hat{A}_0)$. Of course, $\hat{A}_1\not=\hat{A}_0$. Indeed, if $\hat{A}_1=\hat{A}_0$, then $a+b\hat{A}_0^{\gamma-1}=1$. That is $X_0=1$, a contradiction.
\end{enumerate}

Last, we investigate condition $\gamma<-\frac{1+a}{1-a}$ in the second case ($\gamma<-1$). Note that the notation $a$ is actually $a_a$ and $\gamma$ is $\gamma_a$. In the case $\gamma_h=0$, we have
\begin{align}  \label{parameter_gammah_0}
G_a&= G_n^\frac{ \frac{\alpha_\ell}{1 - \alpha_k} \alpha_k \gamma_h + \alpha_\ell \gamma_h + \gamma_\ell } { 1 - \gamma_a - \alpha_a \gamma_h - \frac{\alpha_a}{1 - \alpha_k} \alpha_k \gamma_h }=G_n^{\frac{\gamma_\ell}{1-\gamma_a}}, \quad 
 a_a\equiv \frac{1-\delta_a}{G_a}=  \frac{1-\delta_a}{G_n^{\frac{\gamma_\ell}{1-\gamma_a}}}.
\end{align}


Consequently, 
\begin{align}\label{conditiongamma_a}
    \gamma_a<-\frac{1+a_a}{1-a_a}\Leftrightarrow \gamma_a+1+a_a(1-\gamma_a)<0 \Leftrightarrow \gamma_a+1+(1-\gamma_a)\frac{1-\delta_a}{G_n^{\frac{\gamma_\ell}{1-\gamma_a}}}<0
\end{align}

We now consider two cases.
\begin{enumerate}
    \item When $G_n\geq 1,$ then $G_n^{\frac{\gamma_\ell}{1-\gamma_a}}\geq 1$. Consequently,  condition (\ref{conditiongamma_a}) holds if $\gamma_a+1+(1-\gamma_a)(1-\delta_a)<0$, or, equivalently, $\gamma_a<-\frac{2-\delta_a}{\delta_a}$.

\item When $G_n<1$, then $(\frac{1}{G_n})^{\frac{\gamma_\ell}{1-\gamma_a}}$ is strictly increasing in $\gamma_a$. Moreover, $\frac{\gamma+1}{1-\gamma}=-1+\frac{2}{1-\gamma}$  is strictly increasing in $\gamma_a$. Consequently, condition (\ref{conditiongamma_a}) is equivalent to $\gamma_a<x\equiv \gamma(G_n,\gamma_\ell,\delta_a)$ where $x$ is the unique solution to the equation 
$$x+1+(1-x)\frac{1-\delta_a}{G_n^{\frac{\gamma_\ell}{1-x}}}=0.$$
\end{enumerate}
By combining two cases, we see that condition (\ref{conditiongamma_a}) holds if $\gamma_a$ is low enough.
 
{\bf Point \ref{point_Nperiod}.}  By the Li-Yorke Theorem \citep{LiYorke1975},\footnote{See also \citet{Elaydi2007}, Theorem 2.9, page 96.} it suffices to prove that there exists $x_0$ satisfying $x_0=g^3(x_0)$, $x_0\not= g(x_0)$ and $x_0\not =g^2(x_0)$.

 We have 
\begin{align*}
    g^2(x)&=g(g(x))=a(a x + b x^{\gamma})+b(a x + b x^{\gamma})^{\gamma}\\
    x-g^3(x)&=x-a\big[a(a x + b x^{\gamma})+b(a x + b x^{\gamma})^{\gamma}]-b\big[a(a x + b x^{\gamma})+b(a x + b x^{\gamma})^{\gamma}]^{\gamma}\\
    &=(1-a^3)x-a^2bx^{\gamma}-ab(a x + b x^{\gamma})^{\gamma}-b\big[a(a x + b x^{\gamma})+b(a x + b x^{\gamma})^{\gamma}]^{\gamma}
\end{align*}
Since $x>0$, condition $x-g^3(x)=0$ is equivalent to
\begin{align*}
 (1-a^3)x^{1-\gamma}-a^2b-ab\big(a+\frac{b}{x^{1-\gamma}}\big)^{\gamma}-b\Big[a\big(a+\frac{b}{x^{1-\gamma}}\big)+\frac{b}{x^{1-\gamma}}\big(a+\frac{b}{x^{1-\gamma}}\big)^{\gamma}\Big]^{\gamma}=0\\
 (1-a^3)-a^2\frac{b}{x^{1-\gamma}}-a\frac{b}{x^{1-\gamma}}\big(a+\frac{b}{x^{1-\gamma}}\big)^{\gamma}-\frac{b}{x^{1-\gamma}}\big(a+\frac{b}{x^{1-\gamma}}\big)^{\gamma}\Big[a+\frac{b}{x^{1-\gamma}}\frac{1}{\big(a+\frac{b}{x^{1-\gamma}}\big)^{1-\gamma}}\Big]^{\gamma}=0.
\end{align*}

Again, denote $X\equiv a  + \frac{b}{x^{1-\gamma}}$. We have an equation
\begin{align}
G_2(X)&\equiv 1-a^2X-a(X-a)X^{\gamma}-(X-a)\Big(aX+(X-a)X^{\gamma}\Big)^{\gamma}=0.
\end{align}
It is clear that $G_2(1)=0$ or, equivalently, $x^*=g^3(x^*)$, where $x^*$ is the unique fixed point determined by $x^*=g(x^*)$, i.e.,  $(1-a)(x^*)^{1-\gamma}=b$.

Moreover, $\lim_{X\to\infty}G_2(X)=-\infty.$

We will provide a sufficient condition for the existence of $x_0$ satisfying $x_0=g^3(x_0)$, $x_0\not= g(x_0)$ and $x_0\not =g^2(x_0)$. To prove this claim, we need some intermediate steps. First, we have the following result.
\begin{lemma}\label{step1}
If there exists $X>1$ such that $G_2(X)>0$, then there exists $x_0$ satisfying $x_0=g^3(x_0)$, $x_0\not= g(x_0)$ and $x_0\not =g^2(x_0)$.
\end{lemma}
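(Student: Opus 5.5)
The plan is to produce a zero $X_0>1$ of $G_2$ by the intermediate value theorem, map it back to a point $x_0>0$, and then check that $x_0$ has the three required properties. Two facts are established first.

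\emph{The change of variables.} The map $x\mapsto X\equiv a+b\,x^{\gamma-1}$ is a continuous, strictly decreasing bijection from $(0,\infty)$ onto $(a,\infty)$, because $\gamma<1$ and $b>0$. Its inverse is $x=\big(b/(X-a)\big)^{1/(1-\gamma)}$. In this variable the iterates take a clean form, which I would verify directly:
\begin{align*}
g(x)&=x\,(a+b\,x^{\gamma-1})=xX,\\
g^2(x)&=g(xX)=xX\big(a+(X-a)X^{\gamma-1}\big)=x\big(aX+(X-a)X^{\gamma}\big),
\end{align*}
using $b\,x^{\gamma-1}=X-a$. Applying $g$ once more and dividing by $x>0$, the identity $x=g^3(x)$ is equivalent to $G_2(X)=0$. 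This is exactly the computation displayed just before the lemma, so it can be taken from there; the display above only confirms that the bracket in $G_2$ equals $g^2(x)/x$.

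\emph{Continuity of $G_2$.} For $X>a>0$ the quantity $aX+(X-a)X^{\gamma}$ is strictly positive. Hence raising it to the (possibly negative) power $\gamma$ is well defined, and $G_2$ is continuous on $(a,\infty)$.

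\emph{Existence of a zero.} By hypothesis there is $X_1>1$ with $G_2(X_1)>0$. Since $\lim_{X\to\infty}G_2(X)=-\infty$, continuity gives some $X_0\in(X_1,\infty)$ with $G_2(X_0)=0$. Set $x_0\equiv\big(b/(X_0-a)\big)^{1/(1-\gamma)}>0$. By the equivalence above, $x_0=g^3(x_0)$.

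\emph{Excluding shorter periods.} Since $g(x_0)=x_0X_0$ and $X_0>1$, we get $g(x_0)\neq x_0$; equivalently, $X_0\neq 1$ means $x_0$ is not the unique fixed point $x^*$. Now suppose $x_0=g^2(x_0)$. Then $g(x_0)=g(g^2(x_0))=g^3(x_0)=x_0$, a contradiction. So $x_0\neq g^2(x_0)$, and the lemma follows.

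The only step needing care is confirming that the algebraic reduction to $G_2$ is a genuine equivalence rather than a one-way implication. Bijectivity of the change of variables and positivity of $x$ and of $g^2(x)/x$ guarantee that no solutions are created or lost.
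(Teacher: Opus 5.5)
Your proof is correct and follows the paper's argument. You apply the intermediate value theorem to $G_2$ beyond a point $X_1>1$ where $G_2(X_1)>0$ to get a zero $X_0>1$, pull it back through the bijection $x\mapsto a+b\,x^{\gamma-1}$ to obtain $x_0=g^3(x_0)$, use $X_0\neq 1$ to exclude a fixed point, and note that $x_0=g^2(x_0)$ would force $g(x_0)=g^3(x_0)=x_0$. Your identity $g(x)=xX$ and the continuity check on $(a,\infty)$ make explicit what the paper leaves implicit; the paper instead excludes $x_0=g(x_0)$ through the fixed-point characterization $(1-a)(x^*)^{1-\gamma}=b$, which gives $X_0=1$.
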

\begin{proof}
Since $\lim_{x\to\infty}G_2(x)=-\infty$ and $G_2(X)>0$, there exists $X_0\in (1,\infty)$ such that $G(X_0)=0$.  Since $X_0>1$, we can take the unique $x_0$ determined by $X_0=a+\frac{b}{x_0^{1-\gamma}}$.   Then, define $\hat{A}_0=x_0,\hat{A}_t=x_t\equiv g^t(x_0)$. It remains to prove that $x_0\not=g(x_0)$ and $x_0\not=g^2(x_0)$. Moreover, if $x_0=g^2(x_0)$, then $g(x_0)=g^3(x_0)=x_0$. So, it suffices to prove that $x_0\not=g(x_0)$. Suppose that $x_0=g(x_0)$, then $x_0=x^*$ where $(1-a)(x^*)^{1-\gamma}=b$. This implies that $X_0=a+\frac{b}{x_0^{1-\gamma}}=1$, a contradiction. 
\end{proof}

\begin{lemma} \label{step2}Let $X$ be such that $\frac{1}{a^2}>X>\frac{1}{a}$ (such an $X$ exists because $a\in (0,1)$). Let $\gamma$ be negative and low enough so that 
\begin{align}\label{boundgamma1} \frac{1-a^2X}{(a+1)(X-a)}>\big(aX\big)^{\gamma}
\end{align}
Then, $X>1$ and $G(X)>0$.
\end{lemma}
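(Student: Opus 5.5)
The plan is to compare $G_2(X)$ term by term with a simpler lower bound built from the single quantity $(aX)^{\gamma}$. Throughout, $G$ in the statement is read as the function $G_2$ defined just above, since that is the function Lemma~\ref{step1} requires to be positive.

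First I would record the ordering facts implied by $\frac{1}{a}<X<\frac{1}{a^2}$ with $a\in(0,1)$.
\begin{itemize}
\item $X>\frac1a>1$. This gives the first claim, and also $X>a$.
\item $aX>1$.
\item $1-a^2X>0$.
\end{itemize}
The third fact makes the left-hand side of \eqref{boundgamma1} positive, so the hypothesis is meaningful.

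Next I would bound the two negative terms of
\[
G_2(X)=1-a^2X-a(X-a)X^{\gamma}-(X-a)\Big(aX+(X-a)X^{\gamma}\Big)^{\gamma}.
\]
Since $\gamma<0$, the map $y\mapsto y^{\gamma}$ is strictly decreasing on $(0,\infty)$. This gives two estimates.
\begin{itemize}
\item Because $aX<X$, we get $X^{\gamma}<(aX)^{\gamma}$.
\item Because $X-a>0$ and $X^{\gamma}>0$, we have $aX+(X-a)X^{\gamma}>aX$, hence $\big(aX+(X-a)X^{\gamma}\big)^{\gamma}<(aX)^{\gamma}$.
\end{itemize}
Multiplying each estimate by the positive factors $a(X-a)$ and $(X-a)$, respectively, yields
\[
G_2(X)>1-a^2X-a(X-a)(aX)^{\gamma}-(X-a)(aX)^{\gamma}=1-a^2X-(a+1)(X-a)(aX)^{\gamma}.
\]

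Finally, the right-hand side is positive exactly when \eqref{boundgamma1} holds, because $(a+1)(X-a)>0$. Hence $G_2(X)>0$.

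I would also note why such a $\gamma$ exists. Since $aX>1$, we have $(aX)^{\gamma}\to0$ as $\gamma\to-\infty$, while the left side of \eqref{boundgamma1} is a fixed positive number; so any sufficiently negative $\gamma$ works. The only step needing care is keeping the direction of each inequality straight under a negative exponent; the rest is immediate.
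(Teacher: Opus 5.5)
Your proof is correct and follows the paper's argument. You bound both negative terms of $G_2(X)$ by multiples of $(aX)^{\gamma}$, using $X^{\gamma}<(aX)^{\gamma}$ (the paper's ``$1<a^{\gamma}$'') and $aX+(X-a)X^{\gamma}>aX$, and then multiply the hypothesis by $(a+1)(X-a)>0$ to get $G_2(X)>0$, while also spelling out what the paper leaves implicit ($X>1/a>1$, strictness of the final inequality, and that the $G$ in the statement means $G_2$).
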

\begin{proof}

Consider $G_2(X)=1-a^2X-a(X-a)X^{\gamma}-(X-a)\Big(aX+(X-a)X^{\gamma}\Big)^{\gamma}$.  Observe that
\begin{align*}
&a(X-a)X^{\gamma}+(X-a)\Big(aX+(X-a)X^{\gamma}\Big)^{\gamma}\leq a(X-a)X^{\gamma}+(X-a)(aX)^{\gamma}\\
&\leq a(X-a)(aX)^{\gamma}+(X-a)(aX)^{\gamma} \quad \quad \quad \text{ (because $1<a^{\gamma}$)}\\
&=(a+1)(X-a)(aX)^{\gamma}=(a+1)(X-a)(aX)^{\gamma}\\
&\leq 1-a^2X  \text{ (thanks to  (\ref{boundgamma1})).}
\end{align*}
Therefore, $G_2(X)>0$.
\end{proof}

We now look at condition (\ref{boundgamma1}). Recall that $a=a_a= \frac{1-\delta_a}{G_n^{\frac{\gamma_\ell}{1-\gamma_a}}}$ and $\gamma=\gamma_a$ (see (\ref{parameter_gammah_0})). 

\begin{lemma}\label{lemma_gamma_a}
Let $X=\frac{1}{2}(\frac{1}{a}+\frac{1}{a^2})$. Then, $X>1$ and $\frac{1}{a^2}>X>\frac{1}{a}$. Assume that $a=a_a= \frac{1-\delta_a}{G_n^{\frac{\gamma_\ell}{1-\gamma_a}}}$ satisfies
\begin{align}\frac{a^2}{(a+1)(1+2a+2a^2)}>\big(\frac{a+1}{2a}\big)^{\gamma_a}.\label{key_gamma}
\end{align}
Then $X>1$ and $G(X)>0$.
\end{lemma}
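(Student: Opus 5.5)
The proof reduces to Lemma \ref{step2}: I will check that the particular choice $X=\frac{1}{2}\big(\frac{1}{a}+\frac{1}{a^2}\big)$ lies strictly between $\frac1a$ and $\frac1{a^2}$, and that with this $X$ the sufficient condition \eqref{boundgamma1} becomes exactly \eqref{key_gamma}. Lemma \ref{step2} then gives $X>1$ and $G_2(X)>0$ (the statement writes $G$, meaning the function $G_2$ of the three-cycle analysis).

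\textbf{Step 1: the location of $X$.} By \eqref{parameter_gammah_0}, $a=a_a\in(0,1)$ under the standing assumption $a_a\in(0,1)$. Then $\frac{1}{a^2}>\frac{1}{a}>1$. Hence the midpoint $X$ of $\frac1a$ and $\frac1{a^2}$ satisfies $\frac{1}{a}<X<\frac{1}{a^2}$, and in particular $X>1$.

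\textbf{Step 2: translating \eqref{boundgamma1}.} Write $X=\frac{a+1}{2a^2}$ and compute the three quantities in \eqref{boundgamma1}:
\begin{align*}
aX&=\frac{a+1}{2a},\\
1-a^2X&=1-\frac{a+1}{2}=\frac{1-a}{2},\\
X-a&=\frac{a+1-2a^3}{2a^2}.
\end{align*}
Substituting, the left side of \eqref{boundgamma1} is
\begin{align*}
\frac{a^2(1-a)}{(a+1)(1+a-2a^3)}.
\end{align*}
Next I factor $1+a-2a^3=(1-a)(1+2a+2a^2)$; expanding the right side confirms it. Cancelling the positive factor $1-a$, the left side equals $\frac{a^2}{(a+1)(1+2a+2a^2)}$, and the right side is $\big(\frac{a+1}{2a}\big)^{\gamma_a}$. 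So \eqref{boundgamma1} at this $X$ is literally \eqref{key_gamma}.

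\textbf{Step 3: conclusion.} Lemma \ref{step2} requires $\gamma=\gamma_a<0$, which holds by hypothesis, together with \eqref{boundgamma1} for some $X\in(\frac1a,\frac1{a^2})$. Both are now in place, so Lemma \ref{step2} yields $G_2(X)>0$, and Step 1 gave $X>1$.

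\textbf{Main subtlety.} The factorization in Step 2 is the only real obstacle; everything else is bookkeeping. One point needs checking: $a$ itself depends on $\gamma_a$ through $G_n^{\gamma_\ell/(1-\gamma_a)}$. This does not affect the argument, because \eqref{key_gamma} is imposed jointly on $(a,\gamma_a)$ and Lemma \ref{step2} is a pointwise statement for fixed $a$ and $\gamma$. I would also note that the inequality $a^\gamma>1$ used inside Lemma \ref{step2} relies on $a<1$ and $\gamma<0$, both of which hold here.
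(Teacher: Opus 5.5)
Your proposal is correct and follows the paper's proof: substitute $X=\frac{a+1}{2a^2}$ into \eqref{boundgamma1}, reduce it to \eqref{key_gamma}, and invoke Lemma~\ref{step2}; your factorization $1+a-2a^3=(1-a)(1+2a+2a^2)$ spells out the cancellation the paper leaves implicit. You are also right to read $G$ as $G_2$, and $a\in(0,1)$ comes from BGP feasibility $G_a>1-\delta_a$ together with $\delta_a<1$, not from \eqref{parameter_gammah_0} itself; the paper assumes this tacitly in the same way.
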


\begin{proof}(\ref{boundgamma1}) is equivalent to
\begin{subequations}
\begin{align}
   \frac{1-a^2X}{(a+1)(X-a)}>\big(aX\big)^{\gamma_a}
\Leftrightarrow   \frac{1-\frac{a+1}{2}}{(a+1)    \big(\frac{1+a}{2a^2}-a\big)}>\big(\frac{a+1}{2a}\big)^{\gamma_a}\\
   \Leftrightarrow \frac{a^2}{(a+1)(1+2a+2a^2)}>\big(\frac{a+1}{2a}\big)^{\gamma_a}.
\end{align}\end{subequations}
\end{proof}
According to Lemmas \ref{step1}, \ref{step2}, \ref{lemma_gamma_a}, when (\ref{key_gamma}) holds, there exists $x_0$ satisfying $x_0=g^3(x_0)$, $x_0\not= g(x_0)$ and $x_0\not =g^2(x_0)$. In other words, there exists a three-period cycle and, hence,  a $N-$period cycle.


\end{proof}

{\small
\bibliographystyle{apalike}
\bibliography{References.bib}
}
\end{document}